\theoremstyle{plain}
\newtheorem{theorem}{Theorem}[section]
\newtheorem{conjecture}[theorem]{Conjecture}
\newtheorem{lemma}[theorem]{Lemma}
\newtheorem{proposition}[theorem]{Proposition}
\newtheorem{propdef}[theorem]{Proposition-Definition}
\newtheorem*{principle}{Principle}
\theoremstyle{definition}
\newtheorem{definition}[theorem]{Definition}
\newtheorem{example}[theorem]{Example}
\newtheorem{examples}[theorem]{Examples}
\newtheorem{remark}[theorem]{Remark}
\newtheorem{remarks}[theorem]{Remarks}
\newtheorem*{remark*}{Remark}
\newcommand\RR{\mathbf{R}}
\newcommand\CC{\mathbf{C}}
\newcommand\ZZ{\mathbf{Z}}
\newcommand\QQ{\mathbf{Q}}
\newcommand\NN{\mathbf{N}}
\newcommand\PP{\mathbf{P}}
\newcommand{\Gm}{\mathbb{G}_{\operatorname{m}}}
\newcommand{\Spec}{\operatorname{Spec}}
\newcommand{\Vect}{\operatorname{Vect}}
\newcommand{\aess}{\alpha_{\operatorname{ess}}}
\newcommand{\mumin}{\mu_{\operatorname{min}}}
\newcommand{\Norm}{\operatorname{Norm}}
\newcommand{\ctildeT}{\widetilde{\mathfrak{T}}_\mathbf{c}}
\newcommand{\ctildepi}{\widetilde{\pi_\mathbf{c}}}
\newcommand{\hetale}{H^1_{\text{\'et}}}
\newcommand{\aesslv}{\alpha_{\text{ess},L,\nu}}
\newcommand{\bY}{\boldsymbol{Y}}
\title{Rational approximations on toric varieties}
\author{Zhizhong Huang}
\address{Institute of Mathematics, Academy of Mathematics and Systems Science,\\ Chinese Academy of Sciences,\\ 100190 Beijing, China}
\email{zhizhong.huang@yahoo.com} 
\subjclass[2010]{14G05 (primary) 14M25 11G50 11J99 (secondary)}
\keywords{Diophantine approximation of rational points, toric varieties, universal torsors.}
\begin{document}
	\begin{abstract}
		Using the universal torsor method due to Salberger, we study the approximation of a general fixed point by rational points on split toric varieties. We prove that under certain geometric hypothesis the best approximations (in the sense of McKinnon-Roth's work) can be achieved on rational curves passing through the fixed point of minimal degree, confirming a conjecture of McKinnon. These curves are also minimal in the sense of deformation theory, and they correspond, according to Batyrev's terminology, to the centred primitive collections of the structural fan. 
	\end{abstract}
		\maketitle
	\tableofcontents
	\section{Introduction}\label{se:Sectionintro}
	\subsection{Background and motivation}
	K. Roth's theorem \footnote{In this article, we quote contributions from two mathematicians named Roth -- Klaus F. Roth and Mike Roth.} \cite{Roth} is one of the most outstanding and beautiful results in classical Diophantine approximation. Let $\theta\in\RR$ be a real number and $\mu(\theta)>0$\index{mutheta@$\mu(\theta)$} be the \emph{approximation exponent}, that is, the \emph{supremum} of positive real numbers $\mu$ such that the inequality 
	$$\left|\frac{p}{q}-\theta\right|<\frac{1}{\max(|p|,|q|)^\mu}$$
	has infinitely many solutions $\frac{p}{q}\in\QQ$. 	The exponent $\mu(\theta)$ measures how well the real number $\theta$ can be approximated by rational numbers with error term controlled by their ``size''. It is easy to see that if $\theta\in\QQ$, then $\mu(\theta)=1$. A classical result of Dirichlet \cite{Dirichlet1} asserts that, if $\theta$ is irrational, then $\mu(\theta)\geqslant 2$. Thus any irrational number is better approximated than any rational number. It was commonly recognized that the main difficulty lay in bounding $\mu(\theta)$ from above. K. Roth's theorem states
	\begin{equation}\label{eq:Roth}
		\mu(\theta)=2  \quad\text{if}\quad 2\leqslant[\QQ(\theta):\QQ]<\infty.
	\end{equation}
Thus K. Roth's theorem gives the exact approximation exponent $2$ for all irrational algebraic numbers. 
	
	Amongst the generalizations of K. Roth's theorem to higher dimensional cases, let us mention the Schmidt subspace theorem (see  \cite{Schmidt}) and the Faltings-Wüstholz Theorem (see \cite{FaltingsWustholz}). Recently, in a series of works \cite{McKinnon2007}, \cite{McKinnon-Roth1} and \cite{McKinnon-Roth2}, McKinnon and M. Roth introduced the notion of \emph{approximation constant} $\alpha$ (Definition \ref{def:appconst}) and formulated a framework of Diophantine approximation of rational points on arbitrary algebraic varieties. For $X$ a variety defined over a number field $K$\index{K@$K,\overline{K}$} (embedded into a fixed algebraic closure $\overline{K}$), and for every fixed $Q\in X(\overline{K})$, choose some distance function $d_\nu(\cdot,Q)$\index{dist@$d_\nu(\cdot,Q)$} with respect to some fixed place $\nu$ of $K$. Choose a height function $H_L$\index{HL@$H_L$} associated to some fixed line bundle $L$. Then the \emph{(best) approximation constant} $\alpha_{L,\nu}(Q,X)$\index{alpha@$\alpha_{L,\nu}(Q,Y)$} is defined as the \emph{infimum} of positive real numbers $\gamma$ such that the inequality
\begin{equation}\label{eq:alphadef}
	d_\nu(P,Q)^\gamma H_L(P) \leqslant 1, 
\end{equation}
	has infinitely many solutions $P_i\in X(K)$ satisfying $d_\nu(P_i,Q)\to 0$.
	It measures local behaviour of rational points around $Q$ by means of how fast their heights must grow when approaching the fixed point $Q$ on the variety $X$. It also plays a central role in recent investigations \cite{Huang1} \cite{Huang2} \cite{Huang3} of the author on local distribution of rational points. As we put the exponent $\gamma$ on the distance rather than on the height, smaller $\alpha$ means better approximation. For example, on $\mathbf{P}^1$, its relationship with the approximation exponent is $\alpha_{\mathcal{O}(1),\infty}(\theta,\PP^1)=\mu(\theta)^{-1}$ (Example \ref{ex:Roth}). As pointed out before, bounding $\alpha$ from below and even computing its value seem to be challenging problems. Inaugurated by Nakamaye, it now becomes a classical fact that local positivity of a line bundle should govern the its Diophantine approximation quality. McKinnon and M. Roth called it the ``local Bombieri-Lang phenomena''. Bearing this spirit, they provide lower bounds for the constant $\alpha$ using local geometric invariants. One version \cite[Theorem 6.3]{McKinnon-Roth1} of their main results is that for any rational point $Q\in X(\overline{K})$ and any ample line bundle $L$,
\begin{equation}\label{eq:McKinnonRoth}
	\alpha_{L,\nu}(Q,X)\geqslant \frac{1}{2}\varepsilon_L(Q)\index{epsilon@$\varepsilon_L(Q)$}.
\end{equation}
	 where $\varepsilon_L(Q)$ is the \emph{Seshadri constant} of $L$ at $Q$.  Moreover, the inequality above is an equality if and only if both $\alpha$ and $\varepsilon$ are computed on some rational curve on $X$ passing through $Q$.
	See also \cite{Grieve} for analogous results over function fields.
	
	According to some heuristic due to Batyrev and Manin, there exist many similarities between the distribution of rational points and that of rational curves. For example, let us mention Manin's conjecture \cite[RCC]{Manin} on existence of rational curves via the number of rational points of bounded height. In an attempt to formulate a local analogue, McKinnon made the following conjecture, based on the empirical fact 
	that rational points tend to accumulate on rational curves when approaching a fixed point.
	\begin{conjecture}[\cite{McKinnon2007} Conjecture 2.7]\label{conj:mckinnon}
		Let $X$ be a variety over a number field $K$, $L$ be an ample line bundle and $\nu$ be a place of $K$. Suppose that $Q\in X(K)$ and that there exists a rational curve defined over $K$ passing through $Q$ on $X$. Then there exists a rational curve $C$ on $X$ passing through $Q$ achieving the best approximation constant at $Q$ with respect to $L$ and $\nu$, i.e., $$\alpha_{L,\nu}(Q,X)=\alpha_{L,\nu}(Q,C).$$
	\end{conjecture}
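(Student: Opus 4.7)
The plan is to prove the conjecture in the split toric setting that this paper treats, using Salberger's universal torsor parametrization. Fix a split toric variety $X$ and a rational point $Q \in X(K)$; the main case is when $Q$ is a torus-fixed point corresponding to a maximal cone $\sigma$ in the fan $\Sigma$. Realize $X = (\mathbb{A}^{\Sigma(1)} \setminus V(\Sigma))/G$, with $G = \mathrm{Hom}(\mathrm{Pic}(X), \Gm)$ the Néron--Severi torus. Every rational $P \in X(K)$ then lifts, uniquely up to $G(K)$, to an integral point on the universal torsor $\mathcal T$; the height $H_L$ becomes a monomial in the torsor coordinates once $L$ is expanded in the toric divisors $D_\rho$; and the distance $d_\nu(P,Q)$ is controlled above and below by expressions in the coordinates indexed by $\Sigma(1) \setminus \sigma(1)$, which are precisely the ones vanishing at $Q$.

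Next I would identify the candidate rational curves. Following Batyrev, each primitive collection $\mathcal P \subset \Sigma(1)$ yields an extremal class in $NE_1(X)$ via its primitive relation $\sum_{\rho \in \mathcal P} v_\rho = \sum_{\rho'} a_{\rho'} v_{\rho'}$. The \emph{centred} ones at $Q$—those whose primitive relation is supported inside $\sigma(1)$—give rational curves $C_{\mathcal P}$ through $Q$ of minimal $L$-degree in their numerical class. On each such $C_{\mathcal P}$ the constant $\alpha_{L,\nu}(Q, C_{\mathcal P})$ can be computed directly on the normalization $\PP^1 \to C_{\mathcal P}$ by the same argument as in Example \ref{ex:Roth}. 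The McKinnon--Roth lower bound \eqref{eq:McKinnonRoth} then provides the comparison needed to conclude that at least one such curve realises $\alpha_{L,\nu}(Q,X)$.

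The central step is to show that any optimal sequence $\{P_i\}$ approaching $Q$ must, after passing to a subsequence, concentrate on a single $C_{\mathcal P}$. Lifting $\{P_i\}$ to the torsor and tracking which coordinates tend to zero yields a subset $I \subset \Sigma(1) \setminus \sigma(1)$. The irrelevant-locus condition forces $I$ to contain at least one primitive collection (otherwise the $G$-orbit of the lift could be moved away from $V(\Sigma)$ without approaching $Q$), and a Lagrange-multiplier-type balancing between the exponents of distance and height, applied to each primitive relation, shows that the optimum is attained only when exactly one centred primitive collection's coordinates collapse at matched rates—equivalently, when $\{P_i\}$ concentrates on the corresponding $C_{\mathcal P}$.

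The hardest part will be this last step, in particular excluding "mixed" asymptotic regimes in which two or more centred primitive collections contribute simultaneously at the optimal rate. This is where the paper's geometric hypothesis on $(\Sigma, Q, L)$ must enter: one needs a convexity/genericity condition ensuring that the balancing polytope has a unique active constraint. A secondary technical point is to verify that the optimal rate predicted by the torsor-coordinate analysis is actually achievable by $K$-rational points of controlled arithmetic complexity; this descent from torsor integers to rational approximations on $X$ is precisely the arithmetic payoff of Salberger's method.
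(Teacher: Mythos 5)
The statement is Conjecture~2.7 of McKinnon and is not proved in this paper in full generality; what the paper supplies is a proof in the toric setting under Hypothesis~(*) (Theorems~\ref{thm:mainthm2} and~\ref{thm:mainthm3}). Your proposal attempts the same toric strategy, so that comparison is the right one, but there is a fundamental misidentification at the very start that derails the rest.

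You take the approximated point $Q$ to be a torus-fixed point corresponding to a maximal cone $\sigma$, and accordingly you read the distance in terms of the torsor coordinates indexed by $\Sigma(1)\setminus\sigma(1)$ ``which vanish at $Q$.'' The paper does the opposite: $Q$ is a \emph{general} point, placed in the open orbit $\mathcal{T}$ and normalized to $Q_0=(1,\ldots,1)$ by the torus action. The distance is then $d_\nu(P,Q_0)=\max_i|y_i-1|_\nu$, not $\max_i|y_i|_\nu$, so the quantities that must be controlled are differences like $X_{i_0}-\prod_j X_{n+j}^{\mathfrak{b}_{i_0,j}}$ rather than single vanishing coordinates. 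This is not a cosmetic difference: for a torus-fixed $Q$ the boundary divisors themselves are globally accumulating and dominate the picture (the paper explicitly sets this aside), whereas the content of the paper is the Liouville-type inequality for a general $Q_0$. Relatedly, your reading of ``centred primitive collection'' as one whose primitive relation is ``supported inside $\sigma(1)$'' is not the paper's definition; a primitive collection $\mathcal I$ is centred when $\sum_{\varrho\in\mathcal I}\varrho=0$, i.e.\ the minimal cone containing the sum is the origin — a global property of the fan, not a condition tied to a chosen maximal cone or to $Q$.

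Two further gaps. First, you invoke the McKinnon--Roth Seshadri bound $\alpha \ge \tfrac12\varepsilon$ to ``conclude that at least one such curve realises $\alpha$.'' That inequality gives only a lower bound and says nothing about the constant being \emph{achieved} on a curve; the paper instead proves the precise lower bound $d_\nu(P,Q_0)^\beta H_L(P)\gg 1$ directly from the torsor parametrization, where the key input is the convexity of $\phi_D$ combined with Hypothesis~(*)$\,\Leftrightarrow\,$(**), packaged as Proposition~\ref{prop:sigmai00}. Second, your ``Lagrange-multiplier balancing polytope with a unique active constraint'' is a plausible intuition but not what the hypothesis actually does. Hypothesis~(*) (pseudo-effective cone simplicial) is used via Lemma~\ref{le:hypequivalence} to produce a distinguished maximal cone $\sigma_0$ such that every ray outside $\sigma_0(1)$ is a \emph{nonpositive} integral combination of $\sigma_0(1)$; this makes the parametrization \eqref{eq:parahyp} have monomial denominators only in the ``extra'' coordinates, and it makes the degree comparisons $\deg_L(\mathcal P_{n+j_0})\ge \mathfrak b_{i_0,j_0}\beta$ of Proposition~\ref{prop:sigmai00} hold. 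The separation between the accumulating subvariety $Y$ (fibres $\PP^{\mathcal N_i-1}\times\{1\}$ coming from centred primitive collections of minimal degree) and its complement is then handled by analyzing pairs of adjacent maximal cones (the case analysis (I)/(II) in the proof of Theorem~\ref{thm:mainthm2}), not by a compactness/Lagrange argument. In short: fix the choice of $Q$ to lie in the open orbit, use the $|y_i-1|$ form of the distance, and replace the Seshadri-bound step by the explicit height formula from Proposition~\ref{prop:height} together with the degree inequalities under~(**); with those changes you would be following the paper's actual route.
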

	Assuming Vojta's Conjecture (see \cite{Vojta}), McKinnon \cite[\S4]{McKinnon2007} showed the consistency of Conjecture \ref{conj:mckinnon} for varieties of general type \footnote{Further evidence for toric varieties is given in a recent paper of McKinnon and Satriano \cite{McKinnon-Satriano}.}. 
	
	There have been a number of works on computation of the approximation constant.  See \cite{McKinnon2007}, \cite{McKinnon-Roth2}, \cite{Huang1}, \cite{Huang2}, \cite{Huang3}. All of them mostly consider rational surfaces, often (weak) del Pezzo surfaces of degree $\geqslant 3$, and they all satisfy Conjecture \ref{conj:mckinnon}.
	 Such surfaces are special kinds of \emph{rationally connected varieties} (see \cite[IV.~3.2]{Kollar}). One big advantage of working with them is that there exist \emph{free} rational curves (over $\overline{K}$) through every general point, so conjecturally (see \cite[p. 174]{C-T}) they contain many rational points (if there exists at least one).
	  However, apart from the simplest varieties like the projective spaces and naive constructions such as products of varieties, there are very few higher dimensional cases for which the constant $\alpha$ is known to have been computed. A similar difficulty appears for the Seshadri constants, even though they are known for all del Pezzo surfaces (see \cite{Broustet}, \cite{Gottsche-Pandharipande}). 
	 
	 
	 In contrast to the notion of \emph{(globally) accumulating varieties} which appears in the Batyrev-Manin-Peyre Principle (see \cite{batyrev-manin}, \cite{Peyre1}) and refers to the subvarieties on which the growth of rational points of bounded height dominates the whole variety, the $\alpha$-constant helps detect the \emph{locally accumulating subvarieties} (Definition \ref{def:achieve} (2)). These subvarieties contain rational points that are ``closer'' to the given point $Q$, in the sense that when $\gamma$ is sufficiently close to $\alpha_{L,\nu}(Q,X)$, almost all solutions of the inequality \eqref{eq:alphadef} are located there. 
	 Thus it makes sense to study what happens on open dense subsets obtained by removing some closed locally accumulating subvariety. 
	 The \emph{essential constant} $\alpha_{\text{ess},L,\nu}(Q)$\index{alphaess@$\aess(Q)$} (Definition \ref{def:essconst}) first introduced by Pagelot \cite{Pagelot} provides such a characterization, which we shall call \emph{generic (best) approximations}. It is defined as the supremum of $\alpha_{L,\nu}(Q,U)$ as $U$ ranges over all Zariski dense open sets. If $\aesslv(Q)$ is finite and the supremum can be achieved on some open set, then in the light of Conjecture \ref{conj:mckinnon}, rational curves realizing $\aesslv(Q)$ should be deformable while fixing $Q$ (hence \emph{very free}) and should cover this open set.
	 
	 In all, even though local behaviour of rational points is very rich and complicated, as already seen on surfaces, all known results are in favour of the following enhanced version of Conjecture \ref{conj:mckinnon}:
	\begin{principle}
		Assume that the variety is rationally connected and the point to be approximated is general. Then 
		the best (resp. generic) approximations should be achieved on subvarieties swept out by free (resp. very free) rational curves of small degree.
	\end{principle}

\subsection{Main results}
Toric varieties are special kinds of rational varieties admitting a generically transitive group action. Their arithmetic has been intensively studied.
Rational points are very well distributed in the sense of the Batyrev-Manin-Peyre Principle, thanks to the works of Batyrev-Tschinkel \cite{B-T1}, \cite{B-T2} and Salberger \cite{Salberger}.


We shall be interested in smooth projective toric varieties $X$ of dimension $\geqslant 2$ satisfying the following geometric condition.
	\begin{center}
		$(*)$ The cone of pseudo-effective divisors $\overline{\operatorname{Eff}}(X)$\index{EffX@$\overline{\operatorname{Eff}}(X)$} is simplicial.\index{Hyp1@Hypothesis $(*)$}
	\end{center}
Typical examples are products of projective spaces, projectivizations of direct sums of line bundles over projective spaces, and can have arbitrarily large Picard number.
We assume that the point to be approximated is general, so that it lies on the open orbit, i.e. the torus $\mathcal{T}=\Gm^{\dim X}$\index{T@$\mathcal{T}$}.
The (global) accumulating subvariety is the union of boundary divisors and does not play a role here. 
We prove the following two results, which confirm the Principle and answer affirmatively McKinnon's Conjecture \ref{conj:mckinnon}.
The first one concerns the best approximations.
\begin{theorem}[see Theorem \ref{thm:mainthm2}]\label{thm:mainthm}
	Let $X$ be a split smooth projective toric variety defined over a number field $K$ and equipped with a line bundle $L$. Let $Q\in\mathcal{T}(K)$. Suppose that $X$ verifies Hypothesis $(*)$.
	\begin{enumerate}
		\item If $L$ is nef, then the best approximations for $Q$ \emph{can be achieved} on any free rational curve through $Q$ of minimal $L$-degree.
		\item If $L$ is ample, then the best approximations for $Q$ \emph{are properly achieved} on the subvariety swept out by the free rational curves through $Q$ of minimal $L$-degree. 
	\end{enumerate}
\end{theorem}

The precise meaning of ``properly achieved'' and ``can be achieved''
will be discussed in Section \ref{se:Sectionalpha} (Definition \ref{def:achieve}, Remarks \ref{rmk:achieve}). Our result not only gives the precise value of the approximation constant $\alpha$, but also reveals the exact shape of locally accumulating subvarieties, therefore it can be seen as an effective version of the main theorems in \cite[\S6]{McKinnon-Roth1} in the toric setting. Theorem \ref{thm:mainthm} also generalises \cite[Corollary 3.4]{McKinnon2007} which considers all dimension $2$ cases.

All rational curves achieving the best approximation in Theorem \ref{thm:mainthm} are smooth. In fact, they correspond to the so-called \emph{centred primitive collections}, a notion first invented by Batyrev \cite{Batyrev}. They turn out to be parametrised by the components of the space of rational curves $\operatorname{RatCurve^n}(X)$\index{RatCurve@$\operatorname{RatCurve^n}(X)$} (see \cite[Definition 2.11]{Kollar}) that are \emph{minimal}, whose existence for general varieties now relies on  Mori's theory. 

Our second result concerns generic approximations.
\begin{theorem}[see Theorem \ref{thm:mainthm3}]\label{thm:generic}
	Under the assumption of Theorem \ref{thm:mainthm}, suppose that $X$ has Picard number $\leqslant 2$ (in particular they verify $(*)$), and that $L$ is nef. Then the generic best approximations for $Q$ can be achieved on every very free rational curve through $Q$ of minimal $L$-degree.
\end{theorem}
Under the weaker assumption that $L$ is moreover big, then in many cases we can show that (see Theorem \ref{thm:mainthm3} (2)) the subvariety swept out by the minimal free rational curves through $Q$ is locally accumulating as in Theorem \ref{thm:mainthm} (2).

Based on results about surfaces, we observe that both the best approximations and the generic approximations, especially the latter, seem to show some degeneration-invariance amongst families of polarized varieties. Studying Diophantine approximation on toric varieties may give some evidence on what happens about other varieties admitting toric degenerations. It would also be interesting to compare our result with \cite{ito}, which gives an estimate of the Seshadri constant for toric varieties.
By performing the geometric argument as in \cite[\S3]{McKinnon2007} \cite[\S3]{McKinnon-Roth2}, we would be able to work out Conjecture \ref{conj:mckinnon} for a larger class of varieties not necessarily toric but admitting birational morphisms to toric ones. 

 Without Hypothesis $(*)$, the situation is noticeably more complicated. Indeed, it is not always true that smooth rational curves of minimal degree contribute to the best or the generic approximations, especially when the pseudo-effective cone or the nef cone has too many generators.
McKinnon \cite[\S4]{McKinnon2007} exhibits first examples --- smooth cubic surfaces --- on which the best approximations for a general point are properly achieved on a singular cubic curve (see \cite[Theorem 4.5]{McKinnon-Roth2} for a detailed statement). 
So does their toric degeneration with $3\mathbf{A}_2$ singularities. \footnote{See also \cite[\S8]{McKinnon-Satriano} for another example of a weighted projective space.} Note that (the desingularisation of) this toric variety does not verify Hypothesis $(*)$. 
For the generic approximations, see the surfaces $Y_3,Y_4$ in \cite{Huang2} and \cite{Huang3}. The phenomenon is that, certain singular rational curves, whose approximation constants are equal to their degree divided by the factor $2$ coming from K. Roth's theorem \eqref{eq:Roth} or by the multiplicity at the singular point, give better approximations than the smooth ones. Moreover, the nodal type and the cuspidal type singularities have different contributions. See  \cite[Theorem 2.16]{McKinnon-Roth1}. The appearance of such singular curves is quite general (see a family of examples in \cite[\S5.5]{Huang2}) and they merit further investigation. By Theorem \ref{thm:mainthm}, Hypothesis $(*)$ indicates a sufficient condition for which singular curves do not enter. 
Nevertheless, all known results shed light on the definition of the Seshadri constant: looking for (singular) rational curves whose multiplicities at a fixed point are comparable with their degrees.

\subsection{Outline of the proof} 
We first outline a general strategy about how to compute the approximation constant. Let us denote by $\alpha_{L,\nu}(Q,Y)$ the approximation constant for a point $Q$ computed with respect to a subvariety $Y$.  
To prove that $\alpha_{L,\nu}(Q,Y)\leqslant\gamma$ for some $\gamma>0$, it suffices to find a rational curve $l$ such that $Y$ contains some open dense part of $l$ and that $\alpha_{L,\nu}(Q,l)=\gamma$.
If $l$ is smooth at $Q$, then $\alpha_{L,\nu}(Q,l)$ is just $\deg_L (l)$ (see Proposition \ref{prop:propertiesofalpha}).
The main difficulty lies frequently in obtaining a lower bound.
Assume for simplicity that $L$ is ample. If we could choose properly a height $H_L(\cdot)$ and a distance function $d_\nu(\cdot,Q)$ locally around the fixed point $Q$, and prove a Liouville-type inequality of the form 
\begin{equation}\label{eq:outline0}
d_\nu(P,Q)^\gamma H_L(P)\geqslant C
\end{equation} 
for certain $C>0$ and uniformly for all $K$-points $P$ of $Y$ near $Q$,
this would imply that $\alpha_{L,\nu}(Q,Y)\geqslant \gamma$ (see Proposition \ref{prop:lowerbd}). Combining the previous upper bound, we get the exact value of $\alpha_{L,\nu}(Q,Y)$.

Now assume that $Y$ is Zariski closed. To derive that the constant $\alpha_{L,\nu}(Q,X)$ is properly achieved on $Y$, we first need to show that $$\alpha_{L,\nu}(Q,X)=\aesslv(Q,Y).$$
This amounts to saying that $Y$ itself does not contain any proper Zariski closed subset with smaller approximation constant. This is usually the case when $Y$ is the deformation locus of a class of free rational curves (but not very free) achieving $\alpha_{L,\nu}(Q,X)$. Secondly, we need to do better than \eqref{eq:outline0}, that is, we need to prove that there exists some $\delta>0$ such that
\begin{equation}\label{eq:outline}
d_{\nu}(P,Q)^{\alpha_{L,\nu}(Q,X)+\delta}H_L(P)\geqslant C^\prime>0,
\end{equation}
uniformly for every $K$-point $P$ near $Q$ not in Y.
This implies (see Proposition \ref{prop:lowerbd}) $$\aesslv(Q)\geqslant\alpha_{L,\nu}(Q,X\setminus Y)\geqslant \alpha_{L,\nu}(Q,X)+\delta >\alpha_{L,\nu}(Q,X).$$

To parametrize rational points, we make use of universal torsors \emph{à la} Colliot-Thélène and Sansuc \cite{ct-sansuc}, which allow to lift rational points into integral points in some affine space. And surprisingly, the incorporation of Hypothesis $(*)$ minimizes the complexity introduced by such integral coordinates. We appeal to the work of Salberger \cite{Salberger}, which pioneers combinatorial ways of computing toric height functions. He derives explicit height formulas that also encode information on positivity of the line bundle, and he uses it to prove the Batyrev-Manin-Peyre Principle for split toric varieties over $\QQ$. Pieropan \cite{Pieropan} extends Salberger's result to imaginary quadratic fields, and Frei \cite{Frei} treats the singular cubic surface with $3\textbf{A}_2$ singularities over arbitrary number fields. In the case of function fields, Bourqui \cite{Bourqui2009} \cite{Bourqui2016} studies the distribution of families of rational curves and proves the geometric Batyrev-Manin-Peyre Principle for many types of toric varieties. 
Carrying out the estimation of \eqref{eq:outline0} and \eqref{eq:outline} is a sophisticated task, and is very different from the procedure of counting rational points of bounded height in \cite[\S11]{Salberger}, although it is essentially a comparison between the growth of height and the decreasing of distance.
We shall explain more in Section \ref{se:SectionTheorem}, based on toric geometry,  how Hypothesis $(*)$ and centred primitive collections together help to deduce stronger positivity (e.g. Proposition \ref{prop:sigmai00}) of toric heights.


Thanks to the classification due to Kleinschmidt \cite{Kleinschmidt}, we know all possible fans defining smooth complete toric varieties of Picard number $2$. In particular they always satisfy Hypothesis $(*)$. With more explicit information, we can improve the estimate \eqref{eq:outline} for a properly chosen $Y$ by adapting the exponent on the distance to be the expected value, namely the minimal $L$-degree of very free rational curves, so as to bound the constant $\alpha_{L,\nu}(Q,X\setminus Y)$ from below. It remains to find a dominant family of rational curves in $X\setminus Y$ all passing through $Q$ and achieving $\aesslv(Q)$, and the family of general lines will do.



\subsection{Layouts of the article.} In Section \ref{se:Sectiongeometry} we shall recall some basic toric geometry and the notion of freeness for rational curves, including the geometry of centred primitive collections. We also derive a criterion of characterizing very free rational curves on toric varieties, which may be of independent interest. The parametrization of rational points on toric varieties via universal torsors, together with the formula of calculating heights associated to globally generated line bundles, are given in Section \ref{se:Sectiontorsor}. We define the best approximation constant $\alpha$ and the essential constant $\aess$ in Section \ref{se:Sectionalpha} and discuss several fundamental properties. In Section \ref{se:SectionNumber} we recall a few useful classical facts about algebraic number fields. Section \ref{se:SectionTheorem}, the most technical part, is devoted to the proof of Theorem \ref{thm:mainthm}. 
In Section \ref{se:SectionPic2} we study toric varieties of Picard number $2$ in detail including their structural fans, various cones of divisors, very free curves of minimal degree, and we prove Theorem \ref{thm:generic}. 
In this article, most of the intermediate results are formulated in the language of toric geometry. 

\subsection{Notation} We fix throughout this paper a number field $K$\index{K@$K,\overline{K}$}. Let $\mathcal{O}_K$\index{OK@$\mathcal{O}_K$} be the ring of integers, $\operatorname{Cl}_K$\index{CLK@$\operatorname{Cl}_K$} be the class group and $\mathcal{M}_K$\index{MK@$\mathcal{M}_K,\mathcal{M}_K^f, \mathcal{M}_K^\infty$} be the set of places of $K$. The set $\mathcal{M}_K=\mathcal{M}_K^f\sqcup \mathcal{M}_K^\infty$ comprises finite places and infinite ones. For $\nu\in\mathcal{M}_K^f$, we shall use the absolute value $|\cdot|_\nu$\index{nu@$\lvert\cdot\rvert_\nu$} normalized with respect to $K$. That is, if $p$ is a prime number such that $\nu\mid p$, then $|x|_\nu=|N_{K_\nu/\QQ_p}(x)|_p$. If $\nu\in\mathcal{M}_K^\infty$, we put $|\cdot|_\nu=|\cdot|$ if $\nu$ is real and $|\cdot|_\nu=|\cdot|^2$ if $\nu$ is complex, where $|\cdot|$ is induced by the usual absolute value on the completion $K_\nu$ via the embedding $\varsigma_\nu:K\hookrightarrow K_\nu$\index{Knusigmanu@$K_\nu,\varsigma_\nu$}. 
Let $\Norm(\cdot)$\index{Norm@$\Norm(\cdot)$} be the norm function defined for all fractional ideals of $K$. For $\mathfrak{p}\in\operatorname{Spec}\mathcal{O}_K$, $\operatorname{ord}_{\mathfrak{p}}(\cdot)$\index{ordp@$\operatorname{ord}_{\mathfrak{p}}(\cdot)$} denotes the valuation order in the ring $\mathcal{O}_{K,\mathfrak{p}}$.
Let $V$ be a vector space over a field $F$ and $P\subset V$. Then $\operatorname{Vect}_F(P)$\index{Vect@$\operatorname{Vect}$} denotes the vector subspace of $V$ spanned by elements in $P$. Further notation for toric varieties will be introduced in subsequent texts.

\subsection*{Acknowledgements} This paper grew out of part of my Ph.D. thesis realised at Université Grenoble Alpes. I would like to thank Emmanuel Peyre for constant encouragement over the past few years, and I'm grateful to David McKinnon for his interest. The idea of considering primitive collections was brought up by Michel Brion, to whom I address my gratitude. Special thanks go to the anonymous referee for numerous suggestions which lead to significant improvements in the exposition. When working on this project the author was partly supported by the project ANR GARDIO, by a Riemann fellowship and by grant DE 1646/4-2 of the Deutsche Forschungsgemeinschaft.

\section{Geometric preliminaries}\label{se:Sectiongeometry}
\subsection{Toric geometry}
We refer the reader to excellent books  \cite{Fulton} and \cite{CoxLittleSchenck} for general introduction to toric varieties.
In this section we state several well-known facts needed mostly without proof and fix notation.

Fix a rank $n$\index{n@$n$} lattice $N\simeq \ZZ^n$\index{N@$N$} and let $M=N^\vee=\operatorname{Hom}_\ZZ(N,\ZZ)$\index{M@$M$}. We denote by $\mathcal{T}=\operatorname{Spec}(K[M])\simeq \mathbb{G}_{\operatorname{m},K}^n$ the open orbit. The lattice $N$ (resp. $M$) is naturally identified with the set of co-characters (resp. characters) of the torus $\mathcal{T}$. For $m\in M$, write $\chi^m:\mathcal{T}\to\Gm$\index{chim@$\chi^m$} for its associated character. For any $v \in N$, write $\lambda_v:\Gm\to \mathcal{T}$\index{lambdarho@$\lambda_v$} be the co-character associated to $v$.

Let $\triangle$\index{fan@$\triangle,\triangle_{\max},X(\triangle)$} be an $n$-dimensional fan consisting of a finite collection of (strongly convex, rational polyhedral and simplicial) \emph{cones} $\sigma\subset N_\RR$ whose \emph{support} $\operatorname{Supp}(\triangle)$ is $\cup_{\sigma\in\triangle}\sigma$ (\cite[Definition 3.1.2]{CoxLittleSchenck}). We denote by $\triangle_{\max}$ the set of maximal cones. For any $\sigma\in\triangle_{\max}$, $\sigma^\vee\subset M_\RR$ denotes its dual cone and $U_\sigma=\operatorname{Spec}(K[\sigma^\vee\cap M])\simeq \mathbf{A}_K^n$\index{Usigma@$\sigma,\sigma^\vee,U_\sigma$} denotes its associated affine open neighbourhood.
The toric variety $X=X(\triangle)$ associated to $\triangle$ is constructed by gluing the data $(U_\sigma,\sigma\in\triangle)$. 

Each one-dimensional cone (called a \emph{ray}) contains a unique primitive element in $N$, which we shall call \emph{generator} (of the ray). Let $\triangle(1)$\index{fan1@$\triangle(1)$} be the set of generators, so that every $\rho\in\triangle(1)$ generates the ray $\RR_{\geqslant 0} \rho$. 
For every cone $\sigma$, $\sigma(1)=\triangle(1)\cap \sigma$\index{sigma@$\sigma,\sigma(1)$} denotes the set of generators of its rays. We call $\sigma$ \emph{regular} if elements of $\sigma(1)$ form part of a basis of the lattice $N$.
The toric variety $X$ is complete and smooth if and only if $\operatorname{Supp}(\triangle)=N_\RR$ and all cones are regular (\cite[Theorem 3.1.19]{CoxLittleSchenck}). We suppose throughout this paper that $X$ is projective and smooth, i.e. complete, smooth and admitting at least one ample divisor, unless otherwise specified.

  The group $\operatorname{Pic}(X)$ being torsion-free, we let $r=\operatorname{rank}_\ZZ\operatorname{Pic}(X)$\index{r@$r$}. 
We recall that for smooth toric varieties there is no difference between the numerical equivalence and the rational equivalence of divisors, in other words, $\operatorname{Pic}^0(X)=0,\operatorname{Pic}(X)\simeq \operatorname{NS}(X)$\index{NSX@$\operatorname{NS}(X)$}, the Néron-Severi group (\cite[Proposition 6.3.15]{CoxLittleSchenck}). 
Each $\rho\in\triangle(1)$ corresponds to a $\mathcal{T}$-invariant boundary divisor $D_{\rho}$\index{Drho@$D_\rho$}. Moreover let $C_\tau$ be the torus invariant curve corresponding to a $(n-1)$-dimensional cone $\tau$ such that the elements of $\tau(1)$ together with $\rho$ generate a maximal cone. Then we have $\langle D_\rho,C_\tau\rangle=1$ (\cite[Proposition 6.4.3]{CoxLittleSchenck}).
So the intersection product induces a non-degenerate and perfect paring  $\operatorname{Pic}(X)\times A_1(X)\to \ZZ$, where $A_1(X)$\index{A1X@$A_1(X)$} denotes the Chow group of $1$-cycles modulo numerical equivalence. 

\begin{definition}
In this article, we call any non-trivial equality $\mathcal{P}:\sum_{\rho\in\triangle(1)}a_\rho\rho=0$ a \textit{relation between generators}, or simply a \textit{relation}. It is called \textit{positive} if all coefficients $a_\rho$ are non-negative. We denote by $\mathcal{P}(1)$\index{P1@$\mathcal{P},\mathcal{P}(1)$} the subset of $\rho\in\triangle(1)$ such that $a_\rho\neq 0$.
\end{definition}
We may identify the set of relations as a subgroup of $\ZZ^{\triangle(1)}$ with addition operated respectively on each coefficient. The set of positive relations forms a semi-group.

Recall the following fundamental exact sequence of $\ZZ$-modules (\cite[Proposition 2.12]{Batyrev}, \cite[\S3.4]{Fulton}, \cite[Theorem 4.1.3]{CoxLittleSchenck}):

\begin{equation}\label{eq:exactseqstr}
\xymatrix{
	0\ar[r]&M\ar[r]^-h& \ZZ^{\triangle(1)}\ar[r]^-i&
	\operatorname{Pic}(X)\ar[r]& 0,}
\end{equation}
where $\ZZ^{\triangle(1)}$ is naturally identified with the abelian group of $\mathcal{T}$-invariant divisors on $X$. In particular $\sharp\triangle(1)=n+r$. By taking duals, we obtain
\begin{equation}\label{eq:exactseqstr2}
\xymatrix{
	0\ar[r]
	&\operatorname{Pic}(X)^\vee\ar[r]^-f& \ZZ^{\triangle(1)}\ar[r]^-g
	&N\ar[r]&0, }
\end{equation}
where $\operatorname{Pic}(X)^\vee$ is the dual lattice of $\operatorname{Pic}(X)$, identified with $A_1(X)$.
The maps $f,g,h,i$ are given as follows.
For $m\in M$, 
$$h(m)=(\langle m,\rho\rangle)_{\rho\in\triangle(1)}\in \ZZ^{\triangle(1)}.$$ 
For $(a_\rho)_{\rho\in\triangle(1)}\in \ZZ^{\triangle(1)}$, 
$$i((a_\rho)_{\rho\in\triangle(1)})=\sum_{\rho\in\triangle(1)}a_\rho[D_{\rho}]\in \operatorname{Pic}(X).$$
For a curve $l\subset X$, 
$$f([l])=\left(\langle D_{\rho}\cdot l\rangle\right)_{\rho\in\triangle(1)}\in\ZZ^{\triangle(1)}.$$
We extend $f$ to $A_1(X)$ by linearity.
Finally for $(a_\rho)_{\rho\in\triangle(1)}\in \ZZ^{\triangle(1)}$, 
$$g((a_\rho)_{\rho\in\triangle(1)})=\sum_{\rho\in\triangle(1)}a_\rho \rho\in N.$$
We may identify the group $A_1(X)$ as the kernel of $g$, a subset of $\ZZ^{\triangle(1)}$.
That is, we view curve classes as their associated relations via $g$, whose coefficients are precisely the intersection multiplicities with boundary divisors.
\begin{theorem}\label{thm:positiverelation}
	The following three sets are in one-to-one correspondence with each other:
		\begin{enumerate}
			\item the classes of (rational) curves in $A_1(X)$ intersecting with the open orbit;
			\item the set of positive relations; 
			\item the equivalent families of non-zero homogeneous polynomials $(f_{\rho}(u,v))_{\rho\in\triangle(1)}$ with coefficients in $K$ indexed by $\triangle(1)$ such that $\sum_{\rho\in\triangle(1)}\deg (f_{\rho})\rho=0$ and satisfying the coprimality condition:
			\begin{equation}\label{eq:coprime2}
			\text{for every } \mathcal{I}\subset\triangle(1),\bigcap_{\rho\in\mathcal{I}}D_{\rho}=\varnothing\Longrightarrow \gcd_{\rho\in\mathcal{I}}(f_{\rho}(u,v))=1.
			\end{equation}
	\end{enumerate}
\end{theorem}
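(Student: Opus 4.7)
The plan is to prove the three-way equivalence by combining the dual structural sequence \eqref{eq:exactseqstr2} (which handles (1) $\Leftrightarrow$ (2)) with the Cox/Batyrev parametrization of morphisms $\PP^1\to X$ (which handles (2) $\Leftrightarrow$ (3)).

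For (1) $\Rightarrow$ (2), I would take a rational curve $C\subset X$ meeting the open orbit $\mathcal{T}$. Since $C$ is not contained in any boundary divisor $D_{\varrho_i}$, the intersection numbers $c_i:=\langle D_{\varrho_i}\cdot C\rangle$ are all non-negative. By the definition of $f$ given right after \eqref{eq:exactseqstr2}, one has $f([C])=(c_i)_{i=1}^{n+r}\in\ZZ^{\triangle(1)}$, and by exactness this lies in $\ker(g)$, i.e.\ $\sum_i c_i\varrho_i=0$ is a positive relation. Conversely, for (2) $\Rightarrow$ (1), the map $f$ identifies $A_1(X)$ with $\ker(g)$; any positive relation thus determines a class in $A_1(X)$, and to show it is the class of an actual rational curve meeting $\mathcal{T}$ we produce a morphism $\PP^1\to X$ via (3).

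For (2) $\Leftrightarrow$ (3), I use the Cox construction. Let $\widehat{X}:=\mathbf{A}^{\triangle(1)}\setminus Z(\triangle)$ with $Z(\triangle)$ the irrelevant locus cut out by the monomials $\prod_{\varrho\notin\sigma(1)}X_\varrho$ for $\sigma\in\triangle_{\max}$, and let $\pi\colon \widehat{X}\twoheadrightarrow X$ be the (universal torsor) quotient by the Néron-Severi torus. A morphism $\phi\colon \PP^1\to X$ whose image meets $\mathcal{T}$ lifts, since $\PP^1$ is simply connected and the torus torsor is trivial over $\PP^1$, to $\widehat{\phi}\colon\PP^1\to\widehat{X}$, and such a lift is given precisely by a tuple of homogeneous polynomials $(f_{\varrho_i}(u,v))_i$ on $\PP^1$, well-defined up to the torus action scaling each coordinate. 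The condition that the image of $\widehat{\phi}$ avoids $Z(\triangle)$ translates, unwinding the combinatorial description of the irrelevant ideal, into the requirement that for every subset $\mathcal{I}\subset\triangle(1)$ whose rays do not span a cone of $\triangle$ (equivalently $\bigcap_{\varrho_i\in\mathcal{I}}D_{\varrho_i}=\varnothing$), the polynomials $(f_{\varrho_i})_{\varrho_i\in\mathcal{I}}$ have no common zero on $\PP^1$, i.e.\ are coprime — this is exactly \eqref{eq:coprime2}. Finally, pulling back each torus-invariant divisor one sees that $\deg(f_{\varrho_i})=\langle D_{\varrho_i}\cdot C\rangle$, so the degree tuple of $(f_{\varrho_i})$ is the positive relation $\sum_i\deg(f_{\varrho_i})\varrho_i=0$ of (2).

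To complete the bijection I verify that two such tuples yield the same curve class iff they differ by the Cox torus action (together with a reparametrization of $\PP^1$), which is exactly the equivalence of families in the statement; and that every positive relation $(c_i)$ is realized, by choosing $f_{\varrho_i}$ generic of degree $c_i$ — a generic choice over the infinite field $K$ will satisfy \eqref{eq:coprime2}, using that the set of forbidden coincidences is a proper closed subset of the parameter space $\prod_i H^0(\PP^1,\mathcal{O}(c_i))$. The main obstacle I anticipate is precisely this last existence step: one must check that the combinatorial constraint $\sum_i c_i\varrho_i=0$ is compatible with genericity, i.e.\ the space of tuples satisfying \eqref{eq:coprime2} is non-empty for every positive relation. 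This reduces to verifying that for each maximal cone $\sigma$ one can arrange the polynomials indexed by $\triangle(1)\setminus\sigma(1)$ to avoid a common zero, which follows by degree counting and a standard dimension argument on the incidence variety in $\PP^1\times \prod_{\varrho\notin\sigma(1)} H^0(\PP^1,\mathcal{O}(c_\varrho))$, since at least one $c_\varrho>0$ for each such $\sigma$ when the relation is non-trivial.
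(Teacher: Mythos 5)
Your proposal follows essentially the same route as the paper: the structural exact sequence \eqref{eq:exactseqstr2} handles (1) $\Leftrightarrow$ (2), and the Cox/universal torsor description of morphisms $\PP^1\to X$ (Cox's functoriality) handles (2) $\Leftrightarrow$ (3). The one genuine point of divergence is the existence step — showing that every positive relation $(c_i)$ is realized by an actual curve through $\mathcal{T}$. You invoke a genericity/dimension-count argument on $\prod_i H^0(\PP^1,\mathcal{O}(c_i))$, which you correctly flag as the delicate part; the paper instead gives an explicit construction: for pairwise distinct $b_1,\dots,b_{n+r}\in K$, the Zariski closure of $\Gm\dashrightarrow\mathcal{T}$, $x\mapsto\prod_i(\lambda_i(x-b_i))^{c_i}$ (equivalently the tuple $f_{\varrho_i}=(u-b_iv)^{c_i}$) has class $(c_i)$, and distinctness of the $b_i$ makes the coprimality condition \eqref{eq:coprime2} immediate without any incidence-variety argument. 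Your genericity argument does work — the key fact you rely on, that for each $\sigma\in\triangle_{\max}$ some $\varrho\notin\sigma(1)$ has $c_\varrho>0$, follows from strong convexity of cones — but the explicit co-character formula is cleaner and is what the paper actually uses; it also makes the compatibility of the three descriptions transparent. Otherwise the two arguments are the same in substance.
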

\begin{proof}[Sketch of proof]
	On the one hand, if a curve meets the open orbit $\mathcal{T}$, then it intersects properly with all boundary divisors. So all coefficients of its associated relation are non-negative.
	On the other hand,  
	given a positive relation $\sum_{\rho\in\triangle(1)}a_\rho\rho=0,a_\rho\geqslant 0$, we check that for every $(n+r)$-tuple of pairwise distinct elements $(b_\rho)_{\rho\in\triangle(1)}\in K^{\triangle(1)}$, the Zariski closure of the map 
	$$\Gm\dashrightarrow \mathcal{T},\quad x\mapsto \prod_{\rho\in\triangle(1)}(\lambda_\rho(x-b_\rho))^{a_\rho}$$
	is a rational curve. The image of its class under $f$ in \eqref{eq:exactseqstr2} is $(a_\rho)_{\rho\in\triangle(1)}$. This establishes the equivalence between (1) and (2).

	The equivalence between (1) and (3) can be seen as a description of universal torsors for toric varieties (see Section \ref{se:Sectiontorsor} below) over rational function fields of one variable. It is also a particular case of the \emph{functoriality of toric varieties} due to Cox \cite{Cox-tohoku}. We refer to \cite[\S1.2]{BourquiFourier} for a presentation.
\end{proof}

To every $\mathcal{T}$-invariant divisor $D=\sum_{\rho\in\triangle(1)}a_{\rho}D_{\rho}$, 
we associate a polyhedron
$$P_D\index{PD@$P_D$}=\{m\in M_\RR:\text{for all }\rho\in\triangle(1),\langle m,\rho\rangle\geqslant -a_{\rho}\}\subset M_\RR\simeq \RR^n,$$
whose lattice points correspond to global sections of $\mathcal{O}_X(D)$ (see \cite[Proposition 4.3.3]{CoxLittleSchenck}). We also associate a piecewise affine (i.e. linear on every maximal cone) function $\phi_D:N_\RR\to \RR$\index{phiD@$\phi_D$} as follows.
 For any $\gamma\in N_\RR$, choose a maximal cone $\sigma=\sum_{i=1}^{n}\RR_{\geqslant 0} \rho_i$ containing $\gamma$. Let $\{\rho_1^*,\cdots,\rho_n^*\}$ be the dual basis of $\{\rho_{1},\cdots\rho_n\}$ and
 \begin{equation}\label{eq:mdsigma}
 m_D(\sigma)\index{mDsigma@$m_D(\sigma)$}=\sum_{i=1}^n -a_{\rho_i}\rho_i^*\in M, \footnote{Our definition of $m(\sigma)$ differs from \cite[\S8]{Salberger} by a minus sign.}
 \end{equation} 
 that is, $m_D(\sigma)$ is the unique element in $M$ determined by $\langle m_D(\sigma),\rho\rangle=-a_{\rho}$ for every $\rho\in\sigma(1)$.
 Then we define $\phi_D(\gamma)=\langle m_D(\sigma),\gamma\rangle$. 
The function $\phi_D$ is called \emph{convex}, if for all $\sigma\in\triangle_{\max}$,
\begin{equation}\label{eq:convex1}
\phi_{D}(\cdot)\leqslant\langle m_D(\sigma),\cdot\rangle.
\end{equation}
It is called \emph{strictly convex} if moreover for every $\gamma\in N_\RR$ and every $\sigma\in\triangle_{\max}$ such that $\gamma\not\in\sigma$, we have
\begin{equation}\label{eq:convex2}
\phi_{D}(\gamma)<\langle m_D(\sigma),\gamma\rangle.
\end{equation}
Intuitively, the (strict) convexity means that the graph of $\phi_D$ lies (strictly) below that of the linear function $\langle m_D(\sigma),\cdot\rangle$ for every $\sigma\in\triangle_{\max}$. 

The following result establishes several equivalences between different types of positivity of line bundles, convexity of associated affine functions and volume of associated polyhedra.
\begin{theorem}\label{thm:Demazure}
	Let $X$ be a smooth projective toric variety, and $D$ be a $\mathcal{T}$-invariant divisor.
	\begin{enumerate}
		\item (Demazure) The line bundle $\mathcal{O}_X(D)$ is globally generated (resp. ample) if and only if it is nef (resp. very ample). This holds precisely when the function $\phi_D$ is convex (resp. strictly convex).
		\item The line bundle $\mathcal{O}_X(D)$ is \emph{big} (see \cite[\S2.2]{Lazarsfeld}) if and only if $P_D$ has strictly positive $n$-dimensional volume.
	\end{enumerate}
\end{theorem}
\begin{proof}
	For the first part, see for example \cite[p. 68, p. 70]{Fulton}, and \cite[Theorem 6.3.12]{CoxLittleSchenck}.
	The second assertion follows from $\operatorname{vol}(D)=n!\operatorname{Vol}_{\RR^n}(P_D)$, established in \cite{ELMM}.
\end{proof}
For every $\sigma\in\triangle_{\max}$, on the affine open set $U_\sigma$ the line bundle $\mathcal{O}_X(D)$ trivializes as $\chi^{m_D(\sigma)}\mathcal{O}_{U_\sigma}$. We see from Theorem \ref{thm:Demazure} (1) that if $\mathcal{O}_X(D)$ is globally generated, then $m_D(\sigma)\in P_D\cap M$ and $\chi^{-m_D(\sigma)}$ lifts to a global section of $\mathcal{O}_X(D)$.

\begin{definition}\label{def:degree}
	For $D$ a $\mathcal{T}$-invariant divisor and  $\mathcal{P}:\sum_{\rho\in\triangle(1)}a_\rho\rho=0$ a relation, we define $\deg_{\mathcal{O}_X(D)}\mathcal{P}$\index{degP@$\deg_{\mathcal{O}_X(D)}\mathcal{P},\deg_{L}\mathcal{P}$}, the $\mathcal{O}_X(D)$-\emph{degree} of $\mathcal{P}$, to be 
	$$\deg_{\mathcal{O}_X(D)}\mathcal{P}=-\sum_{\rho\in\triangle(1)}a_\rho\phi_{D}(\rho).$$
\end{definition}
If the class of a curve $C$ corresponds to $\mathcal{P}$, then from the definition of $\phi_{D}$ and the exact sequences  \eqref{eq:exactseqstr} and \eqref{eq:exactseqstr2}, $\deg_{\mathcal{O}_X(D)}\mathcal{P}$ is nothing but the intersection number $\langle D,C\rangle$, i.e. the $\mathcal{O}_X(D)$-degree of the curve $C$.

\subsection{Primitive collections}\label{se:primitive}
This notion is introduced by Batyrev in classifying higher dimensional smooth toric varieties. We refer to \cite[Definitions 2.6-2.10]{Batyrev} and the book \cite[Definition 6.4.10]{CoxLittleSchenck} for more details.
\begin{definition}[Batyrev]\label{def:primitive}
	A subset of generators $\mathcal{I}\subset\triangle(1)$ is called a \textit{primitive collection} if the members of $\mathcal{I}$ do not generate a cone of $\triangle$ but those belonging to any proper subset of $\mathcal{I}$ do. Since $\triangle$ is complete, there exists a unique cone $\sigma$ containing the vector $\sum_{\rho\in \mathcal{I}}\rho$ in its relative interior. If $\sigma=\mathbf{0}$, we call this collection \textit{centred}, and we say that the relation
	$\mathcal{P}:\sum_{\rho\in \mathcal{I}}\rho=0$
	is a \textit{centred primitive relation}. We usually write $\mathcal{I}=\mathcal{P}(1)$. Its \textit{cardinality} is $\sharp\mathcal{I}=\sharp\mathcal{P}(1)$. 
\end{definition} 
The $\mathcal{O}_X(D)$-degree (Definition \ref{def:degree}) of a centred primitive collection $\mathcal{I}$ is thus $-\sum_{\rho\in\mathcal{I}}\phi_D(\rho)$.
The following result gives sense to this notion.
\begin{theorem}[Batyrev, Chen-Fu-Hwang]\label{thm:batyrevprimitive}
	There exists a centred primitive collection for every smooth projective toric variety.
\end{theorem}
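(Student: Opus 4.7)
My plan is to prove the existence of a centred primitive relation by induction on the Picard rank $r$ of $X$, combining the toric cone theorem with a combinatorial descent through divisorial contractions.

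I first invoke the Reid--Batyrev toric cone theorem: the Mori cone $\overline{\operatorname{NE}}(X)$ is rational polyhedral and strongly convex, each extremal ray $R$ corresponds (in the sense of Batyrev) to a primitive collection $\mathcal{I}_R$ and admits an extremal contraction $\phi_R:X\to Y$ which, in the smooth toric setting, is either of fiber type or divisorial, never small. The base case $r=1$ is $X\simeq\PP^n$, whose only primitive collection $\triangle(1)=\{\varrho_0,\ldots,\varrho_n\}$ trivially satisfies $\sum_{i=0}^n\varrho_i=0$.

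For the inductive step, I pick an extremal ray $R$. If $\phi_R$ is of fiber type, then the general fiber is a lower-dimensional smooth projective toric variety whose vertical rays coincide with $\mathcal{I}_R$, yielding the centred relation $\sum_{\varrho\in\mathcal{I}_R}\varrho=0$ in $N$ directly. Otherwise every extremal contraction is divisorial; pick one, say $\phi_R:X\to Y$ contracting the prime boundary divisor $D_\varrho$. Then $Y$ is a smooth projective toric variety whose fan $\triangle_Y$ is obtained from $\triangle$ by removing $\varrho$ and merging the adjacent cones, and has Picard rank $r-1$. By the inductive hypothesis, $Y$ carries a centred primitive collection $\mathcal{I}'\subset\triangle_Y(1)\subset\triangle(1)$.

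To deduce a centred primitive collection on $X$, I observe that the relation $\sum_{\varrho'\in\mathcal{I}'}\varrho'=0$ is a statement in the shared lattice $N$ and hence persists on $X$. It remains to verify that $\mathcal{I}'$ is still primitive on $\triangle$. Since cones of $\triangle$ not containing $\varrho$ coincide with cones of $\triangle_Y$, the set $\mathcal{I}'$ cannot lie in a cone of $X$ avoiding $\varrho$ without contradicting primitivity on $Y$, nor in a cone of $X$ containing $\varrho$ (such a cone has rays in $\{\varrho\}\cup\tau(1)$ for the unique contracted cone $\tau$ of $\triangle_Y$, forcing $\mathcal{I}'\subset\tau(1)$ again). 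One may, however, still have proper subsets of $\mathcal{I}'$ contained in $\tau$ but failing to lie in any subdivided piece of $\tau$ in $\triangle$, which would violate primitivity of $\mathcal{I}'$ on $X$. In that event one extracts a smaller primitive subcollection of $\mathcal{I}'$ or of $\mathcal{I}'\cup\{\varrho\}$ using the positive relation expressing $\varrho$ in terms of its star-neighbours in $\triangle$, and iterates.

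The main obstacle is precisely this last lifting step, i.e.\ the scenario where primitivity breaks on a proper subset. A careful analysis using strict convexity of the support function $\phi_D$ of an ample divisor $D$ (or of $\phi_{-K_X}$ in Batyrev's original Fano-case argument, where the additional positivity makes the step automatic) is needed either to exclude the failure or to construct the required replacement centred primitive collection. This delicate combinatorial bookkeeping around the star of $\varrho$ is the technical core distinguishing the Chen--Fu--Hwang extension from Batyrev's original theorem, and is what I expect to occupy the bulk of the work.
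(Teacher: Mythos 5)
The paper does not actually prove this statement: its ``proof'' is a citation to Batyrev (Prop.\ 3.2), Araujo (Thm.\ 1.1) and Chen--Fu--Hwang (Cor.\ 3.3), and the real argument is then reproduced and extended as the paper's Theorem \ref{thm:batyrev} by a minimal-degree computation. Your MMP-descent approach is therefore genuinely different from the route the paper (and its sources) take --- but it has gaps, some of which you flag and some of which you do not.

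Beyond the lifting obstacle you acknowledge at the end, two unstated claims in your induction are problematic. First, ``never small'' is asserted without argument; in Fujino--Sato's toric Mori theory flipping contractions do occur for $\QQ$-factorial projective toric varieties, and I do not believe smoothness of $X$ rules them out once $\dim X \geqslant 3$, so you owe a justification. Second, and more fatally, a divisorial extremal contraction $\phi_R:X\to Y$ of a \emph{smooth} projective toric variety need not land in a smooth variety: contracting the negative section of the Hirzebruch surface $\mathbb{F}_a$ for $a\geqslant 2$ gives $\PP(1,1,a)$, which is singular. Your induction requires that, whenever no extremal contraction is of fiber type, at least one of the divisorial ones has a smooth target of Picard rank $r-1$, and you do not establish this (the $\mathbb{F}_a$ example escapes only because it also has a fiber contraction, which does not help in general). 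So the inductive step does not close.

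The intended argument is far shorter and requires no MMP. Since the fan $\triangle$ is complete, $0$ lies in the interior of the positive hull of $\triangle(1)$, so nontrivial positive relations $\sum_i c_i\varrho_i=0$, $c_i\geqslant 0$, exist --- equivalently, there are rational curves meeting the open orbit. Fix an ample $\mathcal{T}$-invariant divisor $D$; each positive relation has positive integer $\mathcal{O}_X(D)$-degree $\langle D,C\rangle$, so there is a positive relation $\mathcal{P}_0$ of minimal degree. Strict convexity of the support function $\phi_D$ (Theorem \ref{thm:Demazure}) then forces $\mathcal{P}_0$ to be a centred primitive relation, which is exactly the content of Batyrev's Proposition 3.2; the paper's Theorem \ref{thm:batyrev} re-derives this and extends it to the nef case by a small perturbation of $D$ inside the ample cone. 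You should replace your induction with this minimization argument.
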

\begin{proof}
	See \cite[Proposition 3.2]{Batyrev} or \cite[Corollary 3.3]{ChenFuHwang}.
\end{proof}

We are going to show that centred primitive collections give rise to  rational curves that are ``minimal'' amongst deformation families on a smooth projective toric variety. For this we need to introduce some more notions and we refer to \cite[II.2 \& IV.2]{Kollar} for details. 
For $X$ a smooth proper uniruled variety, let $\operatorname{RatCurve^n}(X)$\index{RatCurve@$\operatorname{RatCurve^n}(X)$} be the normalized space of rational curves on $X$, $p:\operatorname{Univ^{rc}}(X)\to\operatorname{RatCurve^n}(X)$\index{UnivrcX@$\operatorname{Univ^{rc}}(X)$} be the universal family and $q:\operatorname{Univ^{rc}}(X)\to X$ be the cycle map. We say that an irreducible component $\mathcal{K}$ of $\operatorname{RatCurve^n}(X)$ is \emph{minimal} if $q|_{p^{-1}(\mathcal{K})}$ is dominant and $p^{-1}(\mathcal{K})\times_X \{x\}$ is proper for a general point $x$ of $X$. Members of $\mathcal{K}$ are called \emph{minimal rational curves}. For any line bundle $L$ on $X$, the $L$-\emph{degree} of $\mathcal{K}$ is the $L$-degree of any of its members. A minimal rational curve is free (see Section \ref{se:freeveryfree} below, i.e. its deformation family covers a general point of $X$) and does not admit any deformation that ``breaks'' into a reducible curve.

Now suppose that $X=X(\triangle)$ is smooth projective toric. Fix a centred primitive relation $\mathcal{I}=\{\rho_1,\cdots,\rho_{m+1}\}$ and its associated relation $\mathcal{P}$. For $i\in\{1,\cdots,m+1\}$, the elements in $\mathcal{I}\setminus \{\rho_i\}$ generate a cone $\sigma_i$ in $\triangle$. Let $\Sigma_{\mathcal{I}}$ be the subfan of $\triangle$ consisting of all faces of $\sigma_i,1\leqslant i\leqslant m+1$. It defines an open toric subvariety $Y$ of $X$ isomorphic to $\PP^m\times \Gm^{n-m}$. Let $\operatorname{pr}_2$ be the projection  $Y\to\Gm^{n-m}$.
Every line contained in any fibre of $\operatorname{pr}_2$ has anticanonical degree $m+1$, and its class corresponds exactly to the relation $\mathcal{P}$. The deformation family of any such line covers $Y$, which is open and dense in $X$. And for every $x\in \Gm^n\subset Y$, any deformation of a line with fixed point $x$ is a line in $\PP^m\times \{\operatorname{pr}_2(x)\}$ which is proper.
The irreducible component in $\operatorname{RatCurve^n}(X)$ corresponding to $\mathcal{P}$ is thus minimal. 

The following result shows that any minimal component arises in this way.
\begin{theorem}[Chen-Fu-Hwang \cite{ChenFuHwang}, Proposition 3.2]\label{th:BCFH}
	Let $X$ be a smooth complete toric variety. Then the minimal components of anticanonical degree $d\in\NN_{\geqslant 2}$ are in one-to-one correspondence with centred primitive collections of cardinality $d$.
\end{theorem}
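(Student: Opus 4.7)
The forward direction of the correspondence has already been worked out in the discussion preceding the theorem: a centred primitive relation $\mathcal{P}$ with support $\{\varrho_1,\dots,\varrho_{m+1}\}$ produces an open toric subvariety $Y\simeq\PP^m\times\Gm^{n-m}\subset X$ whose lines in fibres of the second projection form a minimal dominant component $\mathcal{K}_{\mathcal{P}}$ of $\operatorname{RatCurve^n}(X)$; minimality holds because the family of lines in $\PP^m$ through a fixed point is proper. By Definition~\ref{def:degree}, its $\omega_X^{-1}$-degree equals $\sharp\mathcal{P}(1)$, using that $\phi_{\omega_X^{-1}}(\varrho)=-1$ on every primitive ray. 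Different centred primitive relations have different supports and therefore correspond to distinct classes in $A_1(X)$, so the map is injective.

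For surjectivity, let $\mathcal{K}$ be a minimal dominant component and let $C$ be a general member passing through a general point $x$. Since $C$ meets the open orbit $\mathcal{T}$, Theorem~\ref{thm:positiverelation} identifies $[C]$ with a positive relation $\mathcal{P}\colon\sum_i c_i\varrho_i=0$ having support $\mathcal{I}=\mathcal{P}(1)$. I claim $\mathcal{I}$ is a primitive collection and all $c_i$ equal $1$. First, $\mathcal{I}$ cannot lie in a single cone of $\triangle$, since the primitive generators of a cone are linearly independent and no positive combination of them vanishes. The remaining properties --- that every proper subset of $\mathcal{I}$ sits in some cone, and that each $c_i=1$ --- I would deduce by contradiction: any such failure yields a decomposition $[C]=[\mathcal{P}_1]+[\mathcal{P}_2]$ as a sum of two non-zero effective classes of rational curves, obtained by extracting either the primitive relation on a minimal non-cone proper subset of $\mathcal{I}$, or the centred primitive relation on $\mathcal{I}$ itself.

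To convert this combinatorial decomposition into a geometric contradiction, I would invoke Theorem~\ref{thm:positiverelation}(3), which encodes $C$ via a tuple of homogeneous polynomials $(f_{\varrho})$ with $\deg(f_{\varrho_i})=c_i$. A decomposition $\mathcal{P}=\mathcal{P}_1+\mathcal{P}_2$ corresponds to factoring each $f_{\varrho}$ as a product of two pieces whose degrees match $\mathcal{P}_1$ and $\mathcal{P}_2$; varying these factorizations in a one-parameter family produces a flat degeneration of $C$ in $\overline{\mathcal{K}}$ to a reducible curve $C_1\cup C_2$ of classes $[\mathcal{P}_j]$. Such a limit contradicts properness of $p^{-1}(\mathcal{K})\times_X\{x\}$ and hence the minimality of $\mathcal{K}$. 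The hard part will be ensuring that the degeneration actually passes through the fixed general point $x$ on some component of the special fibre: this is where the flexibility of the torus action on the parametrizations is essential, exploiting that $x$ lies in the open orbit. Once this is achieved, the contradiction forces $\mathcal{P}$ to be a centred primitive relation, which establishes the bijection and the matching of $\omega_X^{-1}$-degree with cardinality.
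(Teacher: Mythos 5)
The paper does not prove this theorem---it cites Chen--Fu--Hwang, and only the forward direction (from a centred primitive relation to a minimal component, via the lines in the $\PP^m$-fibres of the open subvariety $\PP^m\times\Gm^{n-m}\subset X$) is sketched in the paragraph preceding the statement. Your treatment of that direction matches the paper's, and your remark that distinct centred primitive relations have disjoint supports (Lemma~\ref{le:primitive1}) correctly yields injectivity.

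Your surjectivity sketch, however, has a substantive gap in the passage from ``$\mathcal{P}$ is not centred primitive'' to ``$C$ degenerates in $\overline{\mathcal{K}}$ through $x$.'' Combinatorially, if $\mathcal{P}(1)$ strictly contains a primitive collection $\mathcal{J}$ whose primitive relation is not centred, that relation has coefficients of both signs (the rays of the cone containing $\sum_{\varrho\in\mathcal{J}}\varrho$ enter with negative sign), so ``extracting'' it does not split $\mathcal{P}$ into two positive relations, and the corresponding target degrees in the Cox-coordinate presentation $(f_\varrho)$ become negative for some $\varrho$---which forces a factor to vanish identically and kills the naive factorization you propose. More seriously, even once a decomposition $[C]=[C_1]+[C_2]$ in $A_1(X)$ is produced, converting it into a flat degeneration inside $\overline{\mathcal{K}}$ whose special fibre has a component passing through the fixed general point $x$ is exactly a bend-and-break statement; it requires a positivity hypothesis on deformations of $C$ fixing $x$ and is precisely what you flag as ``the hard part'' without resolving. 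This paper's own Picard-number-$2$ analysis already displays the difficulty: $C_3$ of \eqref{eq:cycle3} is a positive relation (indeed an extremal ray of the cone of positive relations) whose support strictly contains the non-centred primitive collection $\{\varrho_{t+1},\dots,\varrho_{n+1}\}$ once some $a_i>0$; it decomposes only as $C_3=a_tC_1+C_2$ with $C_2$ the class of a boundary curve, so showing that the family of $C_3$-curves through a general $x$ is non-proper requires a genuine geometric argument rather than a formal factorization of the $(f_\varrho)$.
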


The following result, due to Batyrev, shows that when $L$ is nef, rational curves belonging to the minimal components of minimal $L$-degree are indeed those of minimal $L$-degree amongst all curves passing through a general point\footnote{In general this is not always true, when $L$ is not sufficiently positive.}. 

\begin{theorem}[Batyrev]\label{thm:batyrev}
Let $X$ be a smooth projective toric variety. 
\begin{enumerate}
	\item If $L$ is ample, then a positive relation has minimal $L$-degree if and only if it is a centred primitive relation of minimal $L$-degree.
	\item If $L$ is globally generated, then the minimal $L$-degree of positive relations equals the minimal $L$-degree of centred primitive relations. 
\end{enumerate}
\end{theorem}
\begin{proof}
	If $L$ is ($\QQ-$)ample, the proof of \cite[Proposition 3.2]{Batyrev} (in a slightly different setting) actually shows that
	$\mathcal{P}\mapsto\deg_L(\mathcal{P})$ attains minimum precisely when $\mathcal{P}$ is centred primitive.
	
	Now suppose that $L=\mathcal{O}_X(D)$  is globally generated (or equivalently by Theorem \ref{thm:Demazure} (1) that $L$ is nef), where $D=\sum_{\rho\in\triangle(1)}a_\rho D_{\rho}$. Since the ample cone is the relative interior of the nef cone by Kleiman's theorem \cite[Theorem 1.4.23]{Lazarsfeld}, for every $\delta>0$ sufficiently small, we can choose an ample $\QQ$-divisor $D_\delta=\sum_{\rho\in\triangle(1)}a_{\rho,\delta}D_{\rho}$ such that for every $\rho\in\triangle(1)$, 
	$$|a_\rho-a_{\rho,\delta}|\leqslant \delta.$$ 
	For every relation $\mathcal{P}:\sum_{\rho\in\triangle(1)}b_\rho\rho=0$, let $\|\mathcal{P}\|=\max_{\rho\in\triangle(1)}|b_\rho|$. Pick a positive relation $\mathcal{P}_0:\sum_{\rho\in\triangle(1)}b_{\rho,0}\rho=0$ such that 
	$$\deg_L(\mathcal{P}_0)=\min_{\substack{\mathcal{P} \text{ positive}}}\deg_L(\mathcal{P})\leqslant \min_{\substack{\mathcal{P} \text{ centred primitive}}}\deg_L(\mathcal{P}).$$
	Now we have
	\begin{align*}
		\deg_L(\mathcal{P}_0)=\nonumber&-\sum_{\rho\in\triangle(1)}b_{\rho,0}\phi_D(\rho)\\
		\geqslant& -\sum_{\rho\in\triangle(1)}b_{\rho,0}\phi_{D_\delta}(\rho)-n\|\mathcal{P}_0\|\delta\\
		=&\deg_{\mathcal{O}_X(D_\delta)}(\mathcal{P}_0)-n\|\mathcal{P}_0\|\delta\\
		\geqslant&\min_{\substack{\mathcal{P} \text{ positive}}}\deg_{\mathcal{O}_X(D_\delta)}(\mathcal{P})-n\|\mathcal{P}_0\|\delta\\ \nonumber
		=&\min_{\substack{\mathcal{P} \text{ centred primitive}}}\deg_{\mathcal{O}_X(D_\delta)}(\mathcal{P})-n\|\mathcal{P}_0\|\delta\\ \nonumber
		\geqslant&\min_{\substack{\mathcal{P}\text{ centred primitive}}} \deg_L(\mathcal{P})-n(\|\mathcal{P}_0\|+1)\delta, \nonumber
	\end{align*} where for the last equality we use Theorem \ref{thm:batyrev} (1).
	This shows that 
		$$\min_{\substack{\mathcal{P} \text{ positive}}}\deg_L(\mathcal{P})=\min_{\substack{\mathcal{P} \text{ centred primitive}}}\deg_L(\mathcal{P}),$$
	as desired.
\end{proof}

\subsection{Free and very free curves}\label{se:freeveryfree}
For more details see \cite[II.3]{Kollar}.
\begin{definition}
	Fix $X$ a smooth variety over $K$. Let $f:\PP^1\to X$ be a rational curve and $d\in\NN$. We say that $f$ is $d$-free if 
	the sheaf $f^*T_X\otimes \mathcal{O}_{\PP^1}(-d)$ is globally generated. We write ``free'' for $0$-free and ``very free'' for $1$-free.
\end{definition}
By Grothendieck's theorem \cite[V. Exercise 2.6]{Hartshorne}, any locally free sheaf $\mathcal{F}$ of finite rank on $\PP^1$ splits, i.e., there exist integers $a_1,\cdots,a_m$ such that
$$\mathcal{F}\simeq \mathcal{O}(a_1)\oplus \cdots\oplus\mathcal{O}(a_m),$$
we define $\mumin(\mathcal{F})=\min_{1\leqslant i\leqslant m} a_i$. With this notation, $\mathcal{F}$ is ample if and only if $\mumin(\mathcal{F})\geqslant 1$\index{mumin@$\mumin$}. The rational curve defined by $f$ above is free (resp. very free) if and only if $\mumin(f^*T_X)\geqslant 0$ (resp. $\geqslant 1$).
\begin{example}[Centred primitive collections]\label{rmk:primitivecol}
	Now let $X$ be smooth projective toric of dimension $n$. One can show that for a rational curve $f:\PP^1\to X$ corresponding to a centred primitive collection $\mathcal{I}$,
	$$f^*T_X\simeq \mathcal{O}(2)\oplus \underbrace{\mathcal{O}(1)\oplus \cdots\oplus\mathcal{O}(1)}_{\sharp \mathcal{I}-2}\oplus \underbrace{\mathcal{O}\oplus\cdots\oplus \mathcal{O}}_{n-\sharp \mathcal{I}+1}.$$
	As we have seen in Section \ref{se:primitive}, such a curve is a line $l$ in the projective space $\PP^{\sharp \mathcal{I}-1}$ lying in a fibre of the toric subvariety $Y=\PP^{\sharp \mathcal{I}-1}\times \Gm^{n+1-\sharp \mathcal{I}}$ via the projection $\operatorname{pr}_2$ onto $\Gm^{n+1-\sharp \mathcal{I}}$. Hence 
	$$f^*T_X=T_{\PP^{\sharp \mathcal{I}-1}}|_l \oplus \mathcal{O}_{\PP^1}^{\oplus (n-\sharp \mathcal{I}+1)}.$$
	So, unless $X=\PP^n$, we have $\sharp \mathcal{I}<n+1$ and thus $\mu_{\operatorname{min}}(f^*T_X)=0$, hence $f$ is not very free \footnote{This gives evidence that, if $X\neq \PP^n$, for every $Q\in \mathcal{T}\simeq\Gm^{n}$, the closed subvariety $\PP^{\sharp \mathcal{I}-1}\times \{\operatorname{pr}_2(Q)\}$ is locally accumulating when approximating $Q$ (see Definition \ref{def:essconst} and Remark \ref{rmk:sweptout} below).}.
\end{example}
We now prove the following useful criterion for detecting very free curves. It also outlines a general procedure to compute the pull back of cotangent bundle for rational curves on toric varieties intersecting the open orbit. 
\begin{theorem}\label{thm:fTXtoric}
	A positive relation $\mathcal{P}$ represents very free rational curves if and only if $\Vect_\QQ\{\rho:\rho\in\mathcal{P}(1)\}=N_\QQ$.
\end{theorem}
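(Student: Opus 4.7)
My plan is to pull back the Euler-type sequence of the smooth toric $X$ along a curve $f:\PP^1\to X$ representing $\mathcal{P}$, translate very freeness into a cohomological vanishing, and match the resulting obstruction with a combinatorial condition on $\mathcal{P}(1)$. Dualizing the standard sequence $0\to\Omega^1_X\to \bigoplus_\varrho\mathcal{O}_X(-D_\varrho)\to\operatorname{Pic}(X)\otimes\mathcal{O}_X\to 0$ gives
$$0\to\operatorname{Pic}(X)^\vee\otimes\mathcal{O}_X\xrightarrow{\beta}\bigoplus_{\varrho\in\triangle(1)}\mathcal{O}_X(D_\varrho)\to T_X\to 0,$$
where $\beta([C])=(\langle D_\varrho\cdot C\rangle\, s_\varrho)_\varrho$ with $s_\varrho$ the canonical section of $\mathcal{O}_X(D_\varrho)$. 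For any $f$ representing $\mathcal{P}$, described as in Theorem \ref{thm:positiverelation} by polynomials $(f_\varrho)_\varrho$ with $\deg f_\varrho=c_\varrho$, pullback yields
$$0\to\mathcal{O}^{\oplus r}_{\PP^1}\xrightarrow{f^*\beta}\bigoplus_\varrho\mathcal{O}_{\PP^1}(c_\varrho)\to f^*T_X\to 0,\qquad f^*\beta([C])=(\langle D_\varrho\cdot C\rangle\, f_\varrho)_\varrho.$$

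By Grothendieck's splitting theorem, $f$ is very free iff $\mumin(f^*T_X)\geqslant 1$, which on $\PP^1$ is equivalent to $H^1(\PP^1,f^*T_X(-2))=0$. Twisting the short exact sequence above by $\mathcal{O}(-2)$ and taking cohomology, for $\varrho\in\mathcal{P}(1)$ one has $c_\varrho\geqslant 1$ and hence $H^1(\mathcal{O}(c_\varrho-2))=0$, while for $\varrho\notin\mathcal{P}(1)$, $c_\varrho=0$ and $H^1(\mathcal{O}(-2))\simeq K$. The long exact sequence therefore collapses to
$$H^1(\mathcal{O}(-2))^{\oplus r}\xrightarrow{\delta}\bigoplus_{\varrho\notin\mathcal{P}(1)}H^1(\mathcal{O}(-2))\longrightarrow H^1(f^*T_X(-2))\longrightarrow 0,$$
so very freeness is equivalent to the surjectivity of $\delta$.

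For $\varrho\notin\mathcal{P}(1)$, the polynomial $f_\varrho$ is a nonzero constant, whence the $(C,\varrho)$-component of $f^*\beta$ is multiplication by the nonzero scalar $\langle D_\varrho\cdot C\rangle f_\varrho$; inducing on $H^1$ and rescaling, $\delta$ is represented by the integer matrix $(\langle D_\varrho\cdot C_j\rangle)_{\varrho\notin\mathcal{P}(1),\,1\leqslant j\leqslant r}$ for any chosen basis $\{[C_j]\}$ of $\operatorname{Pic}(X)^\vee$. Thus $\delta$ is surjective iff the classes $\{[D_\varrho]:\varrho\notin\mathcal{P}(1)\}$ are $\QQ$-linearly independent in $\operatorname{Pic}(X)_\QQ$. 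By the exact sequence \eqref{eq:exactseqstr}, this independence amounts to $M_\QQ\cap\QQ^{\triangle(1)\setminus\mathcal{P}(1)}=0$ inside $\QQ^{\triangle(1)}$, i.e., no nonzero $m\in M_\QQ$ satisfies $\langle m,\varrho\rangle=0$ for all $\varrho\in\mathcal{P}(1)$; by $M$--$N$ duality this is exactly the condition $\Vect_\QQ\{\mathcal{P}(1)\}=N_\QQ$.

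The main delicate step will be the explicit identification of the connecting map $\delta$: one must verify that the constants $f_\varrho$ for $\varrho\notin\mathcal{P}(1)$ contribute only nonzero scalar factors (so that the criterion depends solely on the class $\mathcal{P}$, not on the specific $f$), and that the induced action on $H^1$ genuinely matches the intersection matrix read off from $\beta$. Once this identification is cleaned up, the remainder is a matter of packaging linear algebra through \eqref{eq:exactseqstr} and the $M$--$N$ duality.
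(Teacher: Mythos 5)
Your proof is correct and takes a genuinely different route. The paper also starts from the pulled-back generalized Euler sequence
$$0\to\mathcal{O}_{\PP^1}^{\oplus r}\to\bigoplus_{\varrho}\mathcal{O}_{\PP^1}(c_\varrho)\to f^*T_X\to 0,$$
but then proceeds by explicit manipulation: it selects $r$ linearly independent relations spanning $A_1(X)_\QQ$ adapted to a basis of $\Vect_\QQ\{\mathcal{P}(1)\}$, constructs a bundle automorphism of the middle term that splits off the trivial summand supported on the rays $\varrho$ with $c_\varrho=0$ lying outside the span of $\mathcal{P}(1)$, invokes Lemma \ref{le:twoquotient} to bound $\mumin$ of the quotient from below in the sufficiency direction, and for necessity exhibits $n-q>0$ explicit trivial $\mathcal{O}_{\PP^1}$-factors of $f^*T_X$. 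You replace all of this with the cohomological criterion $\mumin(f^*T_X)\geqslant 1\Leftrightarrow H^1(f^*T_X(-2))=0$, localize the obstruction to the degree-zero summands after twisting, identify the induced map $\delta$ on $H^1$ with the intersection matrix $(\langle D_\varrho\cdot C_j\rangle)_{\varrho\notin\mathcal{P}(1),\,1\leqslant j\leqslant r}$ up to rescaling rows by the nonzero constants $f_\varrho$, and translate surjectivity of $\delta$ into the independence of $\{[D_\varrho]:\varrho\notin\mathcal{P}(1)\}$, then into $\Vect_\QQ\{\mathcal{P}(1)\}=N_\QQ$ via \eqref{eq:exactseqstr}. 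The delicate step you flag is fine: the $\varrho$-component of $f^*\beta$ applied to $[C_j]$ is $\langle D_\varrho\cdot C_j\rangle\cdot f^*s_\varrho$, and for $c_\varrho=0$ the section $f^*s_\varrho=f_\varrho$ is a nonzero scalar, so on $H^1(\mathcal{O}(-2))\simeq K$ the induced map is multiplication by that scalar and the rank of $\delta$ is independent of the chosen $f$. Your argument is shorter and does not require choosing a \emph{general} lift $(f_\varrho)$, while the paper's computation also yields the full splitting type of $f^*T_X$ in the non-very-free case, which the cohomological criterion does not.
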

We begin with a well-known lemma, whose proof is straightforward.
\begin{lemma}\label{le:twoquotient}
	\hfill
	\begin{enumerate}
    \item 
	Let $a_1,a_2\in\NN$ and $f_i\in H^0(\PP^1,\mathcal{O}_{\PP^1}(a_i))\setminus \{0\}$. Suppose that $\gcd(f_1,f_2)=1$. Then we have the exact sequence:
	\begin{align*}
	\begin{array}{ccccccccc}
	0& \longrightarrow&\mathcal{O}_{\PP^1}&\longrightarrow &\mathcal{O}_{\PP^1}(a_1)\oplus \mathcal{O}_{\PP^1}(a_2)&\longrightarrow &\mathcal{O}_{\PP^1}(a_1+a_2)&\longrightarrow &0\\
	& & h&\longmapsto &(hf_1,hf_2)  & & & &\\
	& & & &(g_1,g_2)&\longmapsto &f_2g_1-f_1g_2. & &
	\end{array}
	\end{align*}
	\item Let $a_1,\cdots,a_n\in\NN$ and $f_i\in H^0(\PP^1,\mathcal{O}_{\PP^1}(a_i))\setminus\{0\}$ satisfying $\gcd(f_i,f_j)=1$ for all $1\leqslant i\neq j\leqslant n$. Let $\mathcal{G}$ be the (locally free) quotient of
	$$\mathcal{F}=\bigoplus_{i=1}^{n}\mathcal{O}_{\PP^1}(a_i)$$
	by the image of $\mathcal{O}_{\PP^1}$ under $(f_i)_{i=1}^n$.
	Then we have $\mumin(\mathcal{G})\geqslant \mumin(\mathcal{F})$.
		\end{enumerate}
\end{lemma}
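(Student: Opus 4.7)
The plan for part (1) is to verify the three conditions defining a short exact sequence of sheaves directly. Injectivity of $\alpha:h\mapsto(hf_1,hf_2)$ is immediate from $f_1\neq 0$. The composition vanishes because $f_2(hf_1)-f_1(hf_2)=0$. For surjectivity of $\beta:(g_1,g_2)\mapsto f_2g_1-f_1g_2$, I would argue stalkwise: at any $p\in\PP^1$, the hypothesis $\gcd(f_1,f_2)=1$ forces at least one of $f_1,f_2$ to be a unit in $\mathcal{O}_{\PP^1,p}$, so $\beta$ is surjective locally (if $f_1$ is a unit at $p$, then $\beta(0,-s/f_1)=s$ for any germ $s$ of $\mathcal{O}(a_1+a_2)$). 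Finally, for exactness in the middle, given $(g_1,g_2)$ with $f_2g_1=f_1g_2$, coprimality of $f_1,f_2$ in the UFD of homogeneous polynomials (or in each DVR stalk) forces $f_1\mid g_1$; writing $g_1=hf_1$ and substituting gives $g_2=hf_2$, so $(g_1,g_2)=\alpha(h)$.

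For part (2), the key observation is that the pairwise coprimality implies the $n$ sections $(f_1,\dots,f_n)$ have no common zero on $\PP^1$ (a common zero would in particular be a common zero of $f_1$ and $f_2$). Hence $\mathcal{O}_{\PP^1}\hookrightarrow\mathcal{F}$ is a saturated subsheaf, its cokernel $\mathcal{G}$ is locally free, and by Grothendieck's theorem one can write $\mathcal{G}=\bigoplus_{j=1}^{n-1}\mathcal{O}_{\PP^1}(b_j)$. Set $\mu=\mumin(\mathcal{F})=\min_i a_i$.

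The strategy is to derive a contradiction assuming $\mumin(\mathcal{G})=c<\mu$. Projection onto a summand $\mathcal{O}(c)$ of $\mathcal{G}$, composed with the surjection $\mathcal{F}\twoheadrightarrow\mathcal{G}$, produces a surjective map $\mathcal{F}=\bigoplus_{i=1}^n\mathcal{O}(a_i)\twoheadrightarrow\mathcal{O}(c)$. Such a map is determined by its components $g_i\in\operatorname{Hom}(\mathcal{O}(a_i),\mathcal{O}(c))=H^0(\PP^1,\mathcal{O}(c-a_i))$. But $c<\mu\leqslant a_i$ gives $c-a_i<0$, hence $H^0(\PP^1,\mathcal{O}(c-a_i))=0$ and every $g_i$ vanishes; this contradicts surjectivity. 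Therefore $\mumin(\mathcal{G})\geqslant\mu=\mumin(\mathcal{F})$, as claimed.

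There is no substantial obstacle: both parts follow from standard facts about vector bundles on $\PP^1$ (Grothendieck splitting, computation of $\operatorname{Hom}$ between line bundles, and the trivial vanishing $H^0(\PP^1,\mathcal{O}(d))=0$ for $d<0$). The only delicate point is the stalkwise verification in part (1), which is handled cleanly by invoking the coprimality of $f_1,f_2$ at each point.
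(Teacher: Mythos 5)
Your proof is correct. Note that the paper itself offers no argument to compare against: Lemma \ref{le:twoquotient} is introduced as ``a well-known lemma, whose proof is left to the reader,'' so your write-up simply supplies the omitted verification, and it does so by the standard route. Part (1) is handled properly: the stalkwise version of your coprimality argument (at each point at least one $f_i$ is a unit after trivializing) is exactly what is needed for exactness as sheaves, and your middle-exactness step is sound. For part (2), your Hom-vanishing argument in fact proves the stronger and cleaner statement that \emph{any} locally free quotient $\mathcal{G}$ of $\mathcal{F}=\bigoplus_i\mathcal{O}_{\PP^1}(a_i)$ satisfies $\mumin(\mathcal{G})\geqslant\mumin(\mathcal{F})$, the coprimality hypothesis being used only to guarantee (via the absence of common zeros) that the quotient is locally free; this extra generality is welcome, since in the proof of Theorem \ref{thm:fTXtoric} the lemma is applied to quotients by subsheaves generated by several trivial factors, where precisely this more general form is what one wants.
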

\begin{proof}[Proof of Theorem \ref{thm:fTXtoric}]
	Let us first discuss how to compute $f^*T_X$ for toric varieties.
	Write $\triangle(1)=\{\rho_1,\cdots,\rho_{n+r}\}$ and consider the generalized Euler exact sequence (\cite[Theorem 8.1.6]{CoxLittleSchenck}) of sheaves of $\mathcal{O}_X$-modules:
	\begin{equation*}
		\xymatrix{
			0\ar[r]&\Omega_X^1(X)\ar[r] & \bigoplus_{i=1}^{n+r}\mathcal{O}_X(-D_{\rho_i})\ar[r] & \operatorname{Pic}(X)\otimes_{\ZZ}\mathcal{O}_X\ar[r]& 0},
	\end{equation*}
	whose dual gives rise to
	\begin{equation*}\label{eq:exacteuler}
	\xymatrix{
		0\ar[r]&A_1(X)\otimes_{\ZZ} \mathcal{O}_X\ar[r] & \bigoplus_{i=1}^{n+r}\mathcal{O}_X(D_{\rho_i})\ar[r] & T_X\ar[r]& 0}.
	\end{equation*}
	Fix a positive relation $\mathcal{P}:\sum_{i=1}^{n+r} c_i\rho_i=0$. By relabelling we may assume that $\mathcal{P}(1)=\{\rho_i:c_i\neq 0\}=\{\rho_1,\cdots,\rho_m\}$. By Theorem \ref{thm:positiverelation}, let $f:\PP^1\to X$ be non-constant intersecting $\mathcal{T}$ and corresponding to $\mathcal{P}$. Choose a general lift $(f_i)_{i=1}^{n+r},f_i\in H^0(\PP^1,\mathcal{O}_{\PP^1}(c_i))$ of $f$, that is, $\gcd(f_i,f_j)=1$. The pull back by $f$ of the exact sequence above gives
		\begin{equation}\label{eq:fTX}
	\xymatrix{
		&\mathcal{O}_{\PP^1}^{\oplus r}\ar[r]^-{\phi_{\mathcal{P}}} & \bigoplus_{i=1}^{n+r}\mathcal{O}_{\PP^1}( \deg f^*(\mathcal{O}_X(D_{\rho_i})))\ar[r] & f^*T_X\ar[r]& 0},
	\end{equation}
	and we have $f^*(\mathcal{O}_X(D_{\rho_i}))\simeq\mathcal{O}_{\PP^1}(c_i)$. 
	Then we get from \eqref{eq:fTX} that
\begin{equation}\label{eq:fTX2}
	f^*T_X\simeq \bigoplus_{i=1}^{n+r}\mathcal{O}_{\PP^1}(c_i)/\operatorname{Im}(\mathcal{O}_{\PP^1}^{\oplus r}),
\end{equation}
where $\operatorname{Im}(\mathcal{O}_{\PP^1}^{\oplus r})$ is the sub-bundle generated by the image of $\mathcal{O}_{\PP^1}^{\oplus r}$ under $\phi_{\mathcal{P}}$ in $\oplus_{i=1}^{n+r}\mathcal{O}_{\PP^1}(c_i)$.
	Any non-trivial relation (not necessarily positive) $\mathcal{Q}:\sum_{i=1}^{n+r}w_i\rho _i=0$ defines a morphism 
	\begin{equation}\label{eq:relationgivemorphism}
	\begin{split}
	i_{\mathcal{Q}}:\mathcal{O}_{\PP^1}&\to \bigoplus_{\substack{i\in\{1,\cdots,n+r\}\\ c_i\neq 0}} \mathcal{O}_{\PP^1}(c_i)\hookrightarrow \bigoplus_{i=1}^{n+r} \mathcal{O}_{\PP^1}(c_i);\\
	h&\mapsto  \left(w_i h f_i\right)_{i:c_i\neq 0}.
	\end{split}
	\end{equation}
	To compute $f^*T_X$ it suffices to choose any $r$ linearly independent relations $(\mathcal{P}_j)_{1\leqslant j\leqslant r}$ and compute the sub-bundle generated by $i_{\mathcal{P}_j}(\mathcal{O}_{\PP^1}),1\leqslant j\leqslant r$.
	
We start by proving the \textbf{sufficiency}. Suppose that the $\rho_i$'s generate the ambient space $N_{\QQ}=\QQ^n$. Then $\#\mathcal{P}(1)=m\geqslant n+1$ and we may suppose that $\{\rho_1,\cdots,\rho_n\}$ is a $\QQ$-basis of $N_{\QQ}$. So for every $n+1\leqslant k\leqslant n+r$,  there exist integers $b_k\neq 0$ and $a_{j,k},1\leqslant j\leqslant n$ such that we have the following relations
\begin{equation}\label{eq:krelation}
\mathcal{Q}_k:b_k\rho_k-\sum_{i=1}^n a_{i,k}\rho_{i}=0,\quad n+1\leqslant k\leqslant n+r,
\end{equation}
each one giving rise to a morphism $i_{\mathcal{Q}_k}$ as in \eqref{eq:relationgivemorphism}. These $r$ relations are linearly independent and form a $\QQ$-basis of $A_1(X)_\QQ$.
We define 
\begin{equation}\label{eq:sheafF}
\mathcal{F}= \bigoplus_{i=1}^{n+r}\mathcal{O}_{\PP^1}( \deg f^*(\mathcal{O}_X(D_{\rho_i})))= \left(\bigoplus_{i=1}^{m}\mathcal{O}_{\PP^1}(c_i)\right)\oplus \mathcal{O}_{\PP^1}^{\oplus n+r-m},
\end{equation}
and $M_1$ (resp. $M_2$) to be the sub-bundle of $\mathcal{F}$ generated by images of the morphisms $i_{\mathcal{Q}_k},m+1\leqslant k\leqslant n+r$ (resp. $n+1\leqslant k\leqslant n+r$). We then define
$$\mathcal{G}=\mathcal{F}\diagup M_1,\quad \mathcal{H}=f^*T_X=\mathcal{F}\diagup M_2.$$
We now show that 
\begin{equation}\label{eq:quotientbystep}
\mathcal{G}\simeq \bigoplus_{i=1}^{m}\mathcal{O}_{\PP^1}(c_i).
\end{equation}
Since $c_i>0$ for all $1\leqslant i\leqslant m$, $\mathcal{G}$ is thus ample.
Since
$$ \deg f^*(\mathcal{O}_X(D_{\rho_i}))=c_i>0, 1\leqslant i\leqslant m;\quad \deg f^*(\mathcal{O}_X(D_{\rho_i}))=0,m+1\leqslant i\leqslant n+r,$$
the polynomials $f_k, m+1\leqslant k\leqslant n+r$ are of degree 0, so they are non-zero constants.
Define polynomials $g_{i,k}, (1\leqslant i\leqslant n,m+1\leqslant k\leqslant n+1)$ via the equalities
 $$b_k f_k g_{i,k}= a_{i,k}f_i.$$
 Consider the automorphism of $\mathcal{F}$ defined by
 \begin{align*}
\Psi:\mathcal{F}\longrightarrow & \mathcal{F}\\
(h_1,\cdots,h_{n+r})
\longmapsto &\left(H_1,\cdots,H_{n},h_{n+1},\cdots,h_{n+r}\right),
\end{align*}
where $H_i=h_i-\sum_{k=m+1}^{n+r}h_kg_{i,k},1\leqslant i\leqslant n$. Let us show that 
 \begin{equation}\label{eq:imageunderauto}
\begin{split}
\Psi(M_1)=\{(0,\cdots,0,F_1,\cdots,F_{n+r-m}):F_i\in\mathcal{O}_{\PP^1}\}=0\oplus \mathcal{O}_{\PP^1}^{\oplus n+r-m}\subset \mathcal{F},
\end{split}
\end{equation}
thus the claim \eqref{eq:quotientbystep} reduces to \eqref{eq:imageunderauto}. 
Indeed, for $m+1\leqslant k\leqslant n+r$, recalling \eqref{eq:krelation} \eqref{eq:sheafF}, the morphism $i_{\mathcal{Q}_{k}}$ factorises as
        $$i_{\mathcal{Q}{k}}:\mathcal{O}_{\PP^1}\to \left(\bigoplus_{i\in\{1,\cdots,n\},a_{i,k}\neq 0} \mathcal{O}_{\PP^1}(c_i)\right)\oplus \mathcal{O}_{\PP^1} \hookrightarrow\mathcal{F}.$$
Composed with the automorphism $\Psi$, it becomes
\begin{align*}
\Psi\circ i_{\mathcal{Q}_{k}}:h\longmapsto &\Psi(a_{1,k}hf_1,\cdots,a_{n,k}hf_n, 0,\cdots,0,b_{k}hf_{k},0,\cdots,0)\\
&= (0,\cdots,0,b_{k}f_{k}f,0,\cdots,0).
\end{align*}
Recall that $f_k\in K^*$ and $b_k\neq 0$. Hence
 \begin{align*}
\operatorname{Im}(\Psi\circ i _{\mathcal{Q}_{k}}(\mathcal{O}_{\PP^1}))=\{(0,\cdots,0,h,0,\cdots,0): h\in\mathcal{O}_{\PP^1}\}.
\end{align*}
This proves the claim \eqref{eq:imageunderauto}.

Now let $M_3$ denote the sub-bundle generated by the image of $M_2$ via the projection $\pi:\mathcal{F}\to\mathcal{G}$. Note that $M_3$ is generated by the images of $\Psi\circ i_{\mathcal{Q}_k},n+1\leqslant k\leqslant m$ in $\mathcal{G}$.
We arrive at, since the lift $(f_i)_{1\leqslant i\leqslant n+r}$ is general,
$$\mathcal{H}=\mathcal{F}\diagup M_2\simeq \mathcal{G}\diagup M_3\simeq  \left(\bigoplus_{i=1}^{m}\mathcal{O}_{\PP^1}(c_i)\right)\diagup M_3,$$
Since $\mathcal{G}$ is ample by \eqref{eq:quotientbystep}, it remains to apply Lemma \ref{le:twoquotient} to conclude that
$$\mu_{\operatorname{min}}(\mathcal{H})\geqslant \mu_{\operatorname{min}}(\mathcal{G})=\min_{1\leqslant i\leqslant m} (c_i)>0,$$
which says that $f^*T_X$ is also ample.

We now prove the \textbf{necessity}. Suppose that  $V=\operatorname{Vect}_\QQ\{\rho:\rho\in\mathcal{P}(1)\}\neq N_\QQ$. 
We may assume that $\{\rho_1,\cdots,\rho_q\}$ is a $\QQ$-basis of $V$ for certain $q\leqslant\min(m=\sharp \mathcal{P}(1),n-1)$, and we complete it into $\{\rho_1,\cdots,\rho_q,\rho_{m+1},\cdots,\rho_{m+n-q}\}$, a $\QQ$-basis of $N_\QQ$.
The $r$ relations (where $b_k\neq 0,a_{i,k},d_{j,k}$ are integers)
\begin{equation}\label{eq:kkrelation}
\mathcal{R}_k:b_{k}\rho_k=\sum_{i=1}^{q}a_{i,k}\rho_i+\sum_{j=1}^{n-q}d_{j,k} \rho_{m+j},\quad q+1\leqslant k\leqslant m, m+n-q+1\leqslant k\leqslant n+r.
\end{equation}
are linearly independent.
Preserving the notation $\mathcal{F}$ in \eqref{eq:sheafF}, let $M_4$ (resp. $M_5$) be the sub-bundle of $\mathcal{F}$ generated by the images of the morphisms $i_{\mathcal{R}_k},m+n-q+1\leqslant k\leqslant n+r$ (resp. $k
\in\{q+1,\cdots,m,m+n-q+1,\cdots,n+r\}$) given by the relations \eqref{eq:kkrelation} and let $\mathcal{L},\mathcal{K}$ be defined as
$$\mathcal{L}=\mathcal{F}\diagup M_4,\quad \mathcal{K}=\mathcal{F}\diagup M_5.$$
Arguing as before, on proves that (again since $(f_i)_{1\leqslant i\leqslant n+r}$ is general),
\begin{equation}\label{eq:L}
	\mathcal{L}\simeq\left(\bigoplus_{i=1}^m \mathcal{O}_{\PP^1}(c_i)\right)\oplus \mathcal{O}_{\PP^1}^{\oplus n-q}.
\end{equation}
	Denote by $M_6$ the sub-bundle of $\mathcal{L}$ generated by the images of $i_{\mathcal{R}_k},k
	\in\{q+1,\cdots,m\}$, so that $\mathcal{K}\simeq \mathcal{L}\diagup M_6$. However, since $\rho_k\in V$ for $q+1\leqslant k\leqslant m$, we have $d_{j,k}=0$ for every $1\leqslant j\leqslant n-q$. Thus for every such $k$ the morphism
	$i_{\mathcal{R}_k}$ factorises as
	$$i_{\mathcal{R}_k}:\mathcal{O}_{\PP^1}\to \bigoplus_{i=1}^m \mathcal{O}_{\PP^1}(c_i)\hookrightarrow \mathcal{F}.$$
	So up to automorphism $M_6$ is contained in the first direct-sum factor of $\mathcal{L}$ in \eqref{eq:L}. Write $\overline{M_6}$ for it.
	Consequently, $$ f^*T_X\simeq \mathcal{K}=\mathcal{L}\diagup M_6\simeq \left(\bigoplus_{i=1}^m \mathcal{O}_{\PP^1}(c_i)\diagup \overline{M_6} \right) \oplus \mathcal{O}_{\PP^1}^{\oplus n-q}$$
	is not ample since it possesses $n-q>0$ trivial factors.
\end{proof}
\section{Universal torsors and Cox coordinates}\label{se:Sectiontorsor}

The notion of \emph{universal torsors} is first introduced by Colliot-Thélène and Sansuc in \cite{ct-sansuc}. Following Salberger \cite{Salberger}, in this section we shall sketch an explicit construction in the toric setting. Let $X$ be a smooth projective toric variety with open orbit $\mathcal{T}$. We continue to use the notation in Section \ref{se:Sectiongeometry}.

\subsection{Construction and parametrization}
The exact sequence \eqref{eq:exactseqstr} gives rise to 
\begin{equation}\label{eq:toriexact}
\xymatrix{ 1\ar[r]& \mathcal{T}_{\operatorname{NS}}\ar[r]& \Gm^{\Delta(1)}\ar[r]&\mathcal{T}\ar[r]& 1}\index{TNS@$\mathcal{T}_{\operatorname{NS}},\widetilde{\mathcal{T}}_{\operatorname{NS}}$}
\end{equation}
between split tori over $K$ (see \cite[(10.2)]{Salberger}), where $\mathcal{T}_{\operatorname{NS}}$ is the Néron-Severi torus (associated to $\operatorname{NS}(X)$\index{NSX@$\operatorname{NS}(X)$}). 
 The affine space $\mathbf{A}^{\triangle(1)}$ is identified with the spectrum of the Cox ring $\operatorname{Cox}(X)$\index{Cox@$\operatorname{Cox}(X)$} \cite{Cox} of $X$, whose points are $(n+r)$-tuples $(X_{\rho})_{\rho\in\triangle(1)}$ indexed by $\triangle(1)$. Consider the open subset
\begin{equation}\label{eq:nonvanishing}
\mathfrak{T}=\mathbf{A}^{\Delta(1)}\setminus \bigcup_{\substack{\mathcal{I}\subset\Delta(1)\\ \cap_{\rho\in\mathcal{I}} D_{\rho}=\varnothing}} \left(\bigcap_{\rho\in\mathcal{I}} (X_{\rho}=0)\right)=\mathbf{A}^{\Delta(1)}\setminus \left(\bigcap_{\sigma\in\Delta_{\operatorname{max}}}\left(\prod_{\rho\not\in\sigma(1)}X_{\rho}=0\right)\right)\index{frakT@$\mathfrak{T},\widetilde{\mathfrak{T}}$}.
\end{equation}
Then we have the geometry quotient (\cite[Definition 3.1.1]{Skorobogatov}) $\pi:\mathfrak{T}\to X\simeq \mathfrak{T}\sslash \mathcal{T}_{\operatorname{NS}}$.
\begin{theorem}[Colliot-Thélène \& Sansuc \cite{ct-sansuc} \S2.3, Salberger \cite{Salberger} \S8]
	The quasi-affine variety $\mathfrak{T}$ is a universal torsor (unique up to $K$-isomorphism\footnote{Since $\mathcal{T}_{\operatorname{NS}}$ is split, the unicity follows from the Hilbert 90: $\hetale(K,\mathcal{T}_{\operatorname{NS}})=1$. See \cite[\S2.2]{ct-sansuc}.}) over $X$ under $\mathcal{T}_{\operatorname{NS}}$. \footnote{It is called \emph{principal universal torsor} in \cite[p. 191]{Salberger}}
\end{theorem}

We now write the morphism $\pi$ in coordinates. Choose a maximal cone $\sigma\in\triangle_{\max}$ and let $\sigma(1)=\{\rho_1,\cdots,\rho_n\}=\sigma\cap\triangle(1)$. Since $X$ is smooth, the lattice $N$ is generated by $\rho_1,\cdots,\rho_{n}$. Let $\{\rho_1^*,\cdots,\rho_n^*\}$ be the dual basis. Now the restriction of $\pi$ to the affine neighbourhood $U_\sigma\simeq \mathbf{A}^n$ can be written as, according to \eqref{eq:exactseqstr},
\begin{align}
\pi:\pi^{-1} U_\sigma&\longrightarrow U_\sigma \nonumber\\
(X_1,\cdots,X_{n+r})&\longmapsto \left(\prod_{j=1}^{n+r}X_{j}^{\langle\rho_i^*,\rho_j\rangle }\right)_{1\leqslant i\leqslant n}. \label{eq:unitorpara}
\end{align}

The exact sequence \eqref{eq:toriexact} clearly extends to 
$$\xymatrix{ 1\ar[r]& \widetilde{\mathcal{T}}_{\operatorname{NS}}\ar[r]& \widetilde{\Gm^{\Delta(1)}}\ar[r]&\widetilde{\mathcal{T}}\ar[r]& 1}$$
between split $\mathcal{O}_K$-tori \cite[(10.3)]{Salberger}. So does the construction of $\mathfrak{T}$  and we denote by  $\widetilde{\pi}:\widetilde{\mathfrak{T}}\to \widetilde{X}$ the morphism between toric schemes, which is a smooth $\mathcal{O}_K$-model of $\pi:\mathcal{T}\to X$ \cite[Remark 8.6 (b)]{Salberger}. If $\operatorname{Cl}_K$\index{CLK@$\operatorname{Cl}_K$} is non-trivial, in order to parametrize all rational points, it is necessary to introduce ``twisted'' torsors \cite[p. 20-22]{Skorobogatov} by elements in $\operatorname{Cl}_K^r$.
Following \cite[\S2]{Robbiani} and \cite[\S2.1]{Pieropan}, we introduce the following notation.
 Let $\mathcal{C}$ be a set of ideals as representatives of $\operatorname{Cl}_K$. For any $r$-tuple $\mathbf{c}=(\mathfrak{c}_1,\cdots,\mathfrak{c}_r)\in\mathcal{C}^r$, we identify it as a class $[\mathbf{c}]$ in $\hetale(\widetilde{X},\widetilde{\mathcal{T}}_{\operatorname{NS},\tilde{X}})$ via the morphism (see \cite[Théorème 1.5.1]{ct-sansuc}) $\operatorname{Cl}_k^r=\hetale(\Spec(\mathcal{O}_K),\widetilde{\mathcal{T}}_{\operatorname{NS}})\to \hetale(\widetilde{X},\widetilde{\mathcal{T}}_{\operatorname{NS},\tilde{X}})$.  The twisted torsor $\ctildepi:\ctildeT\to \widetilde{X}$ is a universal torsor of class $[\mathfrak{T}]-[\mathbf{c}]$ in $\hetale(\widetilde{X},\widetilde{\mathcal{T}}_{\operatorname{NS},\tilde{X}})$\index{CcrTc@$\mathcal{C},\mathbf{c},\ctildeT$}. Fix a basis $\mathcal{D}=\{[D_{\rho_1}],\cdots,[D_{\rho_r}]\}$\index{Dcal@$\mathcal{D}$} for $\operatorname{Pic}(X)$ over $\ZZ$. For a divisor $D$, write \begin{equation}\label{eq:bj}
 [D]=\sum_{j=1}^r b_j[D_{\rho_j}],\quad b_j\in\ZZ,1\leqslant j\leqslant r
 \end{equation} in terms of the basis $\mathcal{D}$. We define the fractional ideal
\begin{equation}\label{eq:cD}
\mathbf{c}^D=\prod_{j=1}^{r}\mathfrak{c}_j^{b_j}.
\end{equation}
The following result says that any rational point of $X$ admits a lift to an $\mathcal{O}_K$-point in some twist of $\mathfrak{T}$. Lifts differ by the action of the Néron-Severi torus. 
\begin{theorem}\label{thm:parabyunitor}
	The set $\ctildeT(\mathcal{O}_K)$ contains precisely the $(n+r)$-tuples $(X_{\rho})_{\rho\in\triangle(1)}\in\bigoplus_{\rho\in\triangle(1)}\mathbf{c}^{D_{\rho}}\subset K^{\triangle(1)}$ satisfying the coprimality condition
\begin{equation}\label{eq:coprime}
	\sum_{\sigma\in\triangle_{\max}}\prod_{\rho\in\triangle(1)\setminus \sigma(1)}X_{\rho}\mathbf{c}^{-D_{\rho}}=\mathcal{O}_K.
\end{equation}
	Moreover, we have
	$$X(K)=\bigsqcup_{\mathbf{c}\in\mathcal{C}^r} \widetilde{\pi_\mathbf{c}}(\ctildeT(\mathcal{O}_K)).$$
\end{theorem}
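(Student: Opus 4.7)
The plan is to follow the descent-theoretic approach of Colliot-Thélène--Sansuc, combined with the explicit toric quotient presentation \eqref{eq:unitorpara}, in the manner of Salberger and Pieropan. The starting point is to extend the sequence \eqref{eq:toriexact} to an integral model over $\Spec(\mathcal{O}_K)$: the geometric quotient $\pi:\mathfrak{T}\to X$ extends to $\widetilde\pi:\widetilde{\mathfrak{T}}\to\widetilde X$, an $\mathcal{T}_{\operatorname{NS}}$-torsor over the toric $\mathcal{O}_K$-model. The twisted torsors $\ctildepi:\ctildeT\to\widetilde X$ are then parametrised by $\hetale(\mathcal{O}_K,\mathcal{T}_{\operatorname{NS}})\cong\operatorname{Cl}_K^{\,r}$, and each class is represented by a tuple $\mathbf{c}=(\mathfrak{c}_1,\dots,\mathfrak{c}_r)\in\mathcal{C}^r$ via the fixed basis $\mathcal{D}$ of $\operatorname{Pic}(X)$.

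First I would make the twist concrete on $\mathcal{O}_K$-points. Since $\mathcal{T}_{\operatorname{NS}}\subset\Gm^{\triangle(1)}$ acts on $\mathbf{A}^{\triangle(1)}$ through the characters determined by $\bigl[D_\varrho\bigr]=\sum_{j}b_{\varrho,j}[D_j]$, twisting by $\mathbf{c}$ replaces the trivial structure sheaf by the locally free rank-one sheaves attached to the fractional ideals $\mathbf{c}^{D_\varrho}=\prod_j\mathfrak{c}_j^{b_{\varrho,j}}$. Consequently the $\mathcal{O}_K$-points of the twisted torsor, viewed inside $K^{\triangle(1)}$, are tuples $\mathbf{X}=(X_\varrho)$ with $X_\varrho\in\mathbf{c}^{D_\varrho}$, modulo the global-sections action $\mathcal{O}_K^{\times r}=\mathcal{T}_{\operatorname{NS}}(\mathcal{O}_K)$. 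What remains on the torsor side is to translate the openness condition \eqref{eq:nonvanishing} into an arithmetic statement: for $\mathbf{X}$ to define a point of $\ctildeT$ rather than of the ambient twisted affine space, at every prime $\mathfrak{p}$ of $\mathcal{O}_K$ some maximal cone $\sigma$ must satisfy $\operatorname{ord}_{\mathfrak{p}}\bigl(X_\varrho\mathbf{c}^{-D_\varrho}\bigr)=0$ for all $\varrho\notin\sigma(1)$. Summing these unit-ideals over $\sigma\in\triangle_{\max}$ recovers exactly \eqref{eq:coprime}.

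For the second assertion, let $P\in X(K)$. I would choose a chart $U_\sigma\simeq\mathbf{A}^n$ containing $P$ and write the affine coordinates of $P$ using the formula \eqref{eq:unitorpara}; a naive lift sets $X_\varrho=1$ for $\varrho\in\sigma(1)$ and reads off the remaining coordinates. To convert this to a global integral lift one factors each $X_\varrho$ into fractional ideals, and compares with the divisors $D_\varrho$: the product $\prod_\varrho(X_\varrho)\cdot(\mathbf{c}^{D_\varrho})^{-1}$ defines, via the map $i$ of \eqref{eq:exactseqstr}, an element of $\operatorname{Cl}_K\otimes_{\ZZ}\operatorname{Pic}(X)=\operatorname{Cl}_K^{\,r}$. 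This element is the unique obstruction class $[\mathbf{c}]$ modulo which $P$ admits an integral lift, and after rescaling by appropriate generators of these ideals one obtains a tuple in $\bigoplus_\varrho\mathbf{c}^{D_\varrho}$. Uniqueness of $\mathbf{c}$ up to the chosen system $\mathcal{C}$ of representatives, together with the freeness of the $\mathcal{T}_{\operatorname{NS}}(\mathcal{O}_K)$-action on each fibre, yields the disjointness of the union.

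The principal technical obstacle is the passage between the cohomological description of the twist and its coordinate-wise incarnation: one must verify that the formally constructed $\ctildeT$ agrees, as a scheme of tuples inside $\prod_\varrho\mathbf{c}^{D_\varrho}$, with the torsor obtained by descent theory, and that the coprimality relation \eqref{eq:coprime} is intrinsic (independent of the choice of representative $\mathbf{c}$ up to the $\operatorname{Cl}_K$-action). The independence of chart $U_\sigma$ in the surjectivity argument, and chasing through all primitive collections to show that \eqref{eq:coprime} really captures the non-vanishing locus \eqref{eq:nonvanishing}, are the places where one has to be attentive; everything else is formal bookkeeping with the exact sequences \eqref{eq:exactseqstr} and \eqref{eq:exactseqstr2}.
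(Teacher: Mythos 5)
The paper itself does not give a proof of this statement: its ``proof'' is a single sentence citing \cite[\S2.3]{ct-sansuc}, \cite[p.~15]{Robbiani}, \cite[Theorem 2.7]{Frei-Pieropan}, and \cite[\S2]{Pieropan}. Your proposal reconstructs the descent-theoretic argument that underlies those references, and it captures the essential mechanism correctly: extend \eqref{eq:toriexact} to an integral model; identify twists by $\hetale(\mathcal{O}_K,\mathcal{T}_{\operatorname{NS}})\cong\operatorname{Cl}_K^r$; realise the twisted torsor inside $\bigoplus_\varrho\mathbf{c}^{D_\varrho}$; translate the non-vanishing locus \eqref{eq:nonvanishing} prime-by-prime into the ideal-theoretic coprimality \eqref{eq:coprime}; and for surjectivity, compute the obstruction class of a naive lift via the map $i$ of \eqref{eq:exactseqstr} and use the freeness of the $\mathcal{T}_{\operatorname{NS}}(\mathcal{O}_K)$-action to get disjointness. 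This is the same route Salberger and Pieropan take, so there is no divergence to report; you have supplied more detail than the paper does.

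One small but real slip: in the surjectivity step, the naive lift should set $X_\varrho=1$ for $\varrho\notin\sigma(1)$ and read off $X_{\varrho_i}=y_i$ for $\varrho_i\in\sigma(1)$, not the other way round. In the coordinates of \eqref{eq:unitorpara} one has $\langle\varrho_i^\vee,\varrho_j\rangle=\delta_{ij}$ for $\varrho_j\in\sigma(1)$, so freezing the rays \emph{inside} $\sigma(1)$ to $1$ would leave a generally over- or under-determined system in the remaining $r$ variables, whereas freezing the rays \emph{outside} $\sigma(1)$ recovers $y_i=X_{\varrho_i}$ directly. Since the point of the naive lift is only to produce some element of $K^{\triangle(1)}$ whose ideal-class obstruction one then measures and clears by the torus action, this swap does not damage the overall argument, but it should be corrected.
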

\begin{proof}
	This is a reformulation of \cite[\S2.3]{ct-sansuc} originally stated for fields. See \cite[p. 15]{Robbiani} and also \cite[Theorem 2.7]{Frei-Pieropan}, \cite[p. 419]{Pieropan}.
\end{proof}

\subsection{Heights on toric varieties}\label{se:toricheight}
In this section we follow \cite{Salberger} and derive height formulas based on the combinatorial data of the fan $\triangle$. We write $\triangle(1)=\{\rho_1,\cdots,\rho_{n+r}\}$.
Let $D=\sum_{i=1}^{n+r}a_{\rho_i} D_{\rho_i}$ be a $\mathcal{T}$-invariant divisor considered as an element of $\ZZ^{\triangle(1)}$ in the exact sequence \eqref{eq:exactseqstr}. We suppose that the line bundle $L=\mathcal{O}_X(D)$ is globally generated. 

For every $\sigma\in\triangle_{\max}$, the associated character $\chi^{m_D(\sigma)}$ of the element $m_D(\sigma)\in M$ (recall \eqref{eq:mdsigma}) generates $L$ on $U_\sigma$, and $\chi^{-m_D(\sigma)}$ lifts to a global section of $L$.
For $\nu\in \mathcal{M}_K$, $P_\nu\in X(K_\nu)$, and any $s\in H^0(X,L)$, 
we define the $\nu$-adic \emph{norm} to be
$$\|s(P_\nu)\|_{D,\nu}=\inf_{\substack{\sigma\in\triangle_{\max}\\ P_\nu\in U_\sigma(K_\nu)}}\left|\frac{s}{ \chi^{m_D(\sigma)}}(P_\nu)\right|_{\nu}.$$ 

For a point $P_0=(X_1,\cdots,X_{n+r})\in K^{\triangle(1)}$, let (whenever it is well-defined)
\begin{equation}\label{eq:XP0}
\mathbf{X}(P_0)^D=\prod_{i=1}^{n+r}X_i^{a_{\rho_i}},\index{XP0D@$\mathbf{X}(P_0)^D,\mathbf{X}(P_0)^{D(\sigma)}$}
\end{equation}
and for every $\sigma\in\triangle_{\max}$, 
let
\begin{equation}\label{eq:Dsigma}
D(\sigma)=D+\sum_{i=1}^{n+r}\langle m_D(\sigma),\rho_{i}\rangle D_{\rho_i}
=\sum_{\rho\in\triangle(1)}a_{\rho,\sigma}D_\rho,\index{Dsigma@$D(\sigma)$}
\end{equation}
where for every $\rho\in\triangle(1)$,
\begin{equation}\label{eq:arhosigma}
a_{\rho,\sigma}=a_\rho+\langle m_D(\sigma),\rho\rangle.
\end{equation}
Since $L$ is globally generated, we have $\langle m_D(\sigma),\rho\rangle\geqslant -a_{\rho}$ for every $\rho\in\triangle(1)$, and hence $a_{\rho,\sigma}\geqslant 0$. In particular $a_{\rho,\sigma}=0$ for $\rho\in\sigma(1)$. So $D(\sigma)$ is an effective divisor with support in $\cup_{\rho\in\triangle(1)\setminus\sigma(1)} D_\rho$ (see \cite[p.61--68]{Fulton} or \cite[Proposition 8.7]{Salberger}).  Viewing $D(\sigma)$ as an element of $\ZZ^{\triangle(1)}$, the expression \eqref{eq:XP0} for $\mathbf{X}(P_0)^{D(\sigma)}$ is well-defined for every $P_0=(X_1,\cdots,X_{n+r})\in K^{\triangle(1)}$.
\begin{propdef}[Salberger]\label{propdef:Salbergerheight}
	The function $H_L:X(K)\to \RR_{> 0}$\index{HL@$H_L$} defined by the formula
	$$H_L(P)=\prod_{\nu\in\mathcal{M}_K}\|s(P)\|_{D,\nu}^{-1},$$
	where $s\in H^0(X,L)$ is such that $s(P)\neq 0$, is an \emph{Arakelov Height}. It does not depend on the choice of $s$. Its equivalence class only depends on the class of $D$ in $\operatorname{Pic}(X)$. 
	Suppose that $P\in X(K)$ lifts to $P_0\in \ctildeT(\mathcal{O}_K)$ for some $\mathbf{c}\in \mathcal{C}^r$, then 
\begin{equation}\label{eq:heightsup}
	H_L(P)=\prod_{\nu\in \mathcal{M}_K}\sup_{\sigma\in\Delta_{\operatorname{max}}} |\mathbf{X}(P_0)^{D(\sigma)}|_\nu.
\end{equation}
\end{propdef}
\begin{proof}
	This is a combination of \cite[Propositions 9.2, 9.8, 10.5, 10.12 and 10.14]{Salberger}.
\end{proof}

We now generalise \cite[Proposition 11.3]{Salberger} and \cite[Proposition 2]{Pieropan} which give the formula for $H_{\omega_X^{-1}}$.
\begin{proposition}\label{prop:height}
	With the notation in Proposition-Definition \ref{propdef:Salbergerheight}, we have
	$$H_L(P)=\frac{1}{\Norm(\mathbf{c}^D)}\prod_{\nu\in\mathcal{M}_K^\infty}\sup_{\sigma\in\Delta_{\operatorname{max}}} |\mathbf{X}(P_0)^{D(\sigma)}|_\nu. \footnote{A brief reason why there are only archimedean factors left is due to the coprimality condition \eqref{eq:coprime} in Theorem \ref{thm:parabyunitor} which bounds
		$\sup_{\sigma\in\Delta_{\operatorname{max}}} |\mathbf{X}(P_0)^{D(\sigma)}|_\nu$ for all finite places $\nu$.}$$
\end{proposition}
\begin{proof}
	Write $P_0=(X_\rho)_{\rho\in\triangle(1)}\in K^{\triangle(1)}$. 
	Fix $\mathfrak{p}\in\operatorname{Spec}(\mathcal{O}_K)$ and denote by $\nu\in\mathcal{M}_K$ the corresponding place. 	We now compute the $\nu$-adic part of the height in \eqref{eq:heightsup}. For every $\rho\in\triangle(1)$, define $m_{\rho,\mathfrak{p}},\mathfrak{X}_{\rho,\mathfrak{p}}\in\ZZ$ to be
	$$m_{\rho,\mathfrak{p}}=\operatorname{ord}_{\mathfrak{p}}(\mathbf{c}^{D_\rho}),\quad \mathfrak{X}_{\rho,\mathfrak{p}}=\operatorname{ord}_{\mathfrak{p}}(X_\rho\mathcal{O}_K).$$
	Then, on recalling \eqref{eq:Dsigma} \eqref{eq:arhosigma},
\begin{equation}\label{eq:heightnorm}
\begin{split}
	&\sup_{\sigma\in\triangle_{\max}}|\mathbf{X}(P_0)^{D(\sigma)}|_\nu=\sup_{\sigma\in\triangle_{\max}}\left|\prod_{\rho\in\triangle(1)}X_{\rho}^{a_{\rho,\sigma}}\right|_\nu\\ =&\sup_{\sigma\in\triangle_{\max}}\left|\prod_{\rho\in\triangle(1)\setminus\sigma(1)}X_{\rho}^{a_{\rho,\sigma}}\right|_\nu=\Norm(\mathfrak{p})^{-\min_{\sigma\in\triangle_{\max}} \sum_{\rho\in\triangle(1)\setminus\sigma(1)}a_{\rho,\sigma}\mathfrak{X}_{\rho,\mathfrak{p}}}.
	\end{split}
\end{equation}
	Let $(b_j)_{1\leqslant j\leqslant r}$ be such that \eqref{eq:bj} holds. Because for every $\sigma\in\triangle_{\max}$, $[D]=[D(\sigma)]$ in $\operatorname{Pic}(X)$, we get the following equality (recall \eqref{eq:cD})
\begin{equation}\label{eq:CDsigna}
	\operatorname{ord}_\mathfrak{p}(\mathbf{c}^{D(\sigma)})=\sum_{\rho\in\triangle(1)\setminus\sigma(1)}a_{\rho,\sigma}m_{\rho,\mathfrak{p}}=\sum_{j=1}^rb_j m_{\rho_j,\mathfrak{p}}=\operatorname{ord}_\mathfrak{p}(\mathbf{c}^D).
\end{equation}
	Note that $X_\rho\in \mathbf{c}^{D_\rho}$ by Theorem \ref{thm:parabyunitor}, so $X_\rho \mathbf{c}^{-D_\rho}$ is an ideal of $\mathcal{O}_K$. In particular $m_{\rho,\mathfrak{p}}\leqslant\mathfrak{X}_{\rho,\mathfrak{p}}$ for every $\rho\in\triangle(1)$. Thanks to the coprimality condition \eqref{eq:coprime}, we have
	$$\min_{\sigma\in\triangle_{\max}}\operatorname{ord}_\mathfrak{p}\left(\prod_{\rho\in\triangle(1)\setminus \sigma(1)}X_{\rho}\mathbf{c}^{-D_{\rho}}\right)=0.$$
	So there exists $\sigma^\prime\in\triangle_{\max}$ such that $$\operatorname{ord}_\mathfrak{p}\left( \prod_{\rho\in\triangle(1)\setminus \sigma^\prime(1)}X_{\rho}\mathbf{c}^{-D_{\rho}}\right)=\sum_{\rho\in\triangle(1)\setminus\sigma^\prime(1)}(\mathfrak{X}_{\rho,\mathfrak{p}}-m_{\rho,\mathfrak{p}})=0.$$ Therefore $m_{\rho,\mathfrak{p}}=\mathfrak{X}_{\rho,\mathfrak{p}}$ for every $\rho\in\triangle(1)\setminus\sigma^\prime(1)$.
	With this we can now compute the exponent appearing in \eqref{eq:heightnorm}:
	\begin{align*}
	&\min_{\sigma\in\triangle_{\max}} \sum_{\rho\in\triangle(1)\setminus\sigma(1)}a_{\rho,\sigma}\mathfrak{X}_{\rho,\mathfrak{p}}\\
		=&\min_{\sigma\in\triangle_{\max}}\left(\sum_{\rho\in\triangle(1)\setminus\sigma(1)} a_{\rho,\sigma}(\mathfrak{X}_{\rho,\mathfrak{p}}-m_{\rho,\mathfrak{p}})+\sum_{\rho\in\triangle(1)\setminus\sigma(1)}a_{\rho,\sigma}m_{\rho,\mathfrak{p}}\right)\\
		=&\min_{\sigma\in\triangle_{\max}} \left(\sum_{\rho\in\triangle(1)\setminus\sigma(1)}a_{\rho,\sigma}(\mathfrak{X}_{\rho,\mathfrak{p}}-m_{\rho,\mathfrak{p}})\right)+\sum_{j=1}^rb_j m_{\rho_j,\mathfrak{p}}\\
		=&\sum_{j=1}^rb_j m_{\rho_j,\mathfrak{p}},
	\end{align*}
	which is exactly the $\mathfrak{p}$-th order of $\mathbf{c}^{D}$ by \eqref{eq:CDsigna}. 
	
	Finally returning to \eqref{eq:heightsup}, we get
	\begin{align*}
		\prod_{\nu\in\mathcal{M}_K^f}\sup_{\sigma\in\triangle_{\max}}|\mathbf{X}(P_0)^{D(\sigma)}|_\nu&=\prod_{\mathfrak{p}\in\operatorname{Spec}(\mathcal{O}_K)}\Norm(\mathfrak{p})^{-\sum_{j=1}^rb_j m_{\rho_j,\mathfrak{p}}}\\
		&=\Norm(\mathbf{c}^D)^{-1}.
	\end{align*}
	Plugging into Proposition-Definition \ref{propdef:Salbergerheight}, we get the desired formula.
	\end{proof}

\section{Approximation constants}\label{se:Sectionalpha}
In this section we recall briefly the definition of the approximation constant due to McKinnon and M. Roth. 
We refer the reader to \cite[\S2]{McKinnon-Roth1} for a detailed exposition and many illuminative examples. With the help of this $\alpha$-constant, we define the \emph{essential (approximation) constant} (Definition \ref{def:essconst}) and formulate the notion of \emph{locally accumulating varieties} and its variants (Definition \ref{def:achieve}).

Let $X$ be a projective variety over $K$, i.e. a separated reduced projective scheme of finite type over $K$. We fix a rational point $Q\in X(\overline{K})$,  $\nu\in\mathcal{M}_K$, and $L$ a line bundle, to which we associate a height function $H_L:X(K)\to\RR_{>0}$. We can define $\nu$-adic projective distance functions 
on $X(K_\nu)\times X(K_\nu)$ as in \cite[p. 522]{McKinnon-Roth1}. 
We shall frequently use the distance function (partially evaluated at $Q$) of the following form. Fix $|\cdot|_{\bar{\nu}}$ an extension of $|\cdot|_\nu$ to $\overline{K}$. Let $F$ be a finite extension of $K$ such that $Q\in X(F)$. Let $j:X\hookrightarrow \PP_K^N$ be an embedding. Choose an affine neighbourhood $U=j^{-1}(V)$ of $X$ such that $j(Q)=(x_1,\cdots,x_N)\in V(F)$, where $V\simeq \mathbf{A}_K^N$ is a standard affine chart of $\PP_K^N$ with coordinate functions $X_1,\cdots,X_N$. 
 Then
\begin{equation}\label{eq:distance}
d_\nu(\cdot,Q)= \min (1,\max_{1\leqslant i\leqslant N}(|X_i(\cdot)-x_i|_{\bar{\nu}})) \index{dist@$d_\nu(\cdot,Q)$}
\end{equation}
is a $\nu$-adic distance function on $U(K_\nu)$. Any other distance functions arising from different embeddings are equivalent (see \cite[Proposition 2.4, Lemma 2.5]{McKinnon-Roth1}). 
One can show that when $Q\in X(K)$, the function $d_\nu(\cdot,Q)^{-1}$ is a local $\nu$-adic Weil height function associated to the exceptional divisor of the blow up of $X$ at $Q$ (see for example \cite[Lemma 3.1]{McKinnon-Roth2}).

\subsection{The $\alpha$-constant after McKinnon and M. Roth}
\begin{definition}[McKinnon-M. Roth, Definitions 2.7--2.9 \cite{McKinnon-Roth1}]\label{def:appconst}
	For any subvariety $Y\subset X$, we define the (\textit{best}) \textit{approximation constant} $\alpha_{L,\nu}(Q,Y)$\index{alpha@$\alpha_{L,\nu}(Q,Y)$} (depending on $L$ and $\nu$) to be the \emph{infimum} of $A_{L,\nu}(Q,Y)$\index{AQY@$A_{L,\nu}(Q,Y)$}, where $A_{L,\nu}(Q,Y)$ is the following set
	\begin{align*}
	&\{\gamma>0: \exists C>0 ,\exists (P_i)\in (Y(K)\setminus\{Q\})^\NN, d_\nu(P_i,Q)\to 0, \\ &\qquad\text{and for every } i,d_\nu(P_i,Q)^\gamma H_L(P_i)<C\}.
	\end{align*}
\end{definition}
\begin{remarks}\label{rmk:baselocus}
	\hfill
	\begin{enumerate}
		\item  The value of $\alpha$ is independent of the choices of the distance function $d_\nu(\cdot,Q)$ and the height function $H_L$. So this notion is intrinsic for each rational point and should inherit geometric properties from the ambient variety.
		\item If $X$ is toric, then height functions are equivalent under torus action, and the same holds for distance functions as well. So $\alpha_{L,\nu}(\cdot,X)$ is constant on $\mathcal{T}(K)$. 
	\end{enumerate}

\end{remarks}
\begin{examples}\label{ex:Roth}
	\hfill
	\begin{enumerate}
		\item K. Roth's Theorem \eqref{eq:Roth} can be reformulated by using this $\alpha$-constant. 
	    Since $$H\left(\frac{p}{q}\right)=\frac{\max(|p|,|q|)}{\gcd(p,q)}$$ is an $\mathcal{O}(1)$-height, and in Definition \ref{def:appconst} the exponent $\gamma$ is on the distance function, we obtain that for $\theta\in\PP^1(\overline{\QQ})\cap\PP^1(\RR)$,
		$$\alpha_{\mathcal{O}(1),\infty}(\theta,\PP^1)=\frac{1}{\mu(\theta)}=\begin{cases}
		1 &\text{ if } \theta\in\PP^1(\QQ);\\
		\frac{1}{2} & \text{ otherwise}. 
		\end{cases}$$
		\item On combining K. Roth's theorem with the Mordell-Weil theorem, one can show that (see~\cite[p.~98]{Serre}) for $X$ any abelian variety over $K$, $L$ ample and $Q\in X(\overline{K})$, $\alpha_{L,\nu}(Q,X)=\infty$.
	\end{enumerate}
	
\end{examples}

Some useful properties of $\alpha$ are gathered together below.
\begin{proposition}[\cite{McKinnon-Roth1}, Lemma 2.13, Proposition 2.14]\label{prop:propertiesofalpha}
	We have:
	\begin{enumerate}
		\item 	Let $Q\in \PP^n(K)$. Then for every $\nu\in\mathcal{M}_K$, we have $\alpha_{\mathcal{O}(1),\nu}(Q,\PP^n)=1$.
		\item   For any $m\in \NN_{\geqslant 1}$, we have $\alpha_{mL,\nu}(Q,Y)=m\alpha_{L,\nu}(Q,Y)$.
		\item   For any subvarieties $Y_1,Y_2$ of $X$ such that $Y_1\subset Y_2$, we have $\alpha_{L,\nu}(Q,Y_1)\geqslant \alpha_{L,\nu}(Q,Y_2)$.
	\end{enumerate}
\end{proposition}

We frequently use the following two ways to estimate the approximation constant. Firstly, by using Proposition \ref{prop:propertiesofalpha} (3), we can bound $\alpha_{L,\nu}(Q,X)$ from above. Ideal candidates are the rational curves. If there exists $l$ a smooth rational curve through $Q$, then by Proposition \ref{prop:propertiesofalpha}, we get $\alpha_{L,\nu}(Q,X)\leqslant\alpha_{L,\nu}(Q,l)$, which is equal to $\deg_L l$ if $Q\in X(K)$\footnote{Edited on 06.12.2021.}. On the other hand, we have:
\begin{proposition}\label{prop:lowerbd}
	For any closed subvariety $Z$ of $X$ such that $Y\not\subset Z$ and $\inf_{P\in Z(K)}d_\nu(P,Q)>0$, consider the set $$B^Z_{L,\nu}(Q,Y)=\{\gamma\geqslant 0: \exists C>0, d_\nu(P,Q)^\gamma H_{L}(P)\geqslant C ,~\text{for every } P\in (Y\setminus Z)(K)\setminus\{Q\}\}$$ and let $$b^Z_{L,\nu}(Q,Y)=\sup B^Z_{L,\nu}(Q,Y)\index{bZ@$B^Z_{L,\nu}(Q,Y),b^Z_{L,\nu}(Q,Y)$}.$$ Then 
	\begin{enumerate}
		\item $\alpha_{L,\nu}(Q,Y)\geqslant b^Z_{L,\nu}(Q,Y)$.
		\item Assume moreover that $L$ verifies the Northcott property (see \cite[p.~530]{McKinnon-Roth1}) on $Y\setminus Z$, that is, $\#\{P\in (Y\setminus Z)(K):H_L(P)\leqslant C\}<\infty$ for any $C>0$, then $\alpha_{L,\nu}(Q,Y)= b^Z_{L,\nu}(Q,Y)$.
	\end{enumerate} 
\end{proposition}
Our definition the set $B^Z_{L,\nu}(Q,Y)$ is inspired by the classical notion of \emph{irrationality measure}. Recall that $\mu\geqslant 0$ is an irrationality measure of a real number $\theta$ if there exists $C(\mu)>0$ such that  the inequality 
$$\left|\frac{p}{q}-\theta\right|> \frac{C(\mu)}{|q|^{\mu}}$$
holds for any $\frac{p}{q}\in\QQ$.
An equivalent definition of the approximation exponent $\mu(\theta)$ is the \emph{infimum} of all irrationality measures of $\theta$.

In particular, Proposition \ref{prop:lowerbd} implies that an estimate of the shape $d_\nu(P,Q)^\gamma H(P)\geqslant C>0$ valid for every $P\in (X\setminus Z)(K)\setminus \{Q\}$ implies $\gamma\in B^Z_{L,\nu}(Q,X)$. Hence we get the lower bound $\alpha_{L,\nu}(Q,X)\geqslant b^Z_{L,\nu}(Q,X)\geqslant \gamma$.
\begin{proof}[Proof of Proposition \ref{prop:lowerbd}]
	Our argument is similar to \cite[Proposition 2.11]{McKinnon-Roth1}\footnote{Note however that our formulation of the set $B^Z_{L,\nu}(Q,Y)$ is different from \cite[Definition 2.10]{McKinnon-Roth1}.}. First it is clear that if $B^Z_{L,\nu}(Q,Y)$ is non-empty, then it is an interval: $\gamma_0\in B^Z_{L,\nu}(Q,Y)$ implies that $[0,\gamma_0]\subset B^Z_{L,\nu}(Q,Y)$.
	
	We now show (1). For any $\delta>0$, by Definition \ref{def:appconst}, we can find a sequence $(P_i)\in (Y(K)\setminus\{Q\})^\NN$ such that $d_\nu(P_i,Q)\to 0$ and that $d_\nu(P_i,Q)^{\alpha_{L,\nu}(Q,Y)+\delta} H_L(P_i)$ is bounded. This implies that $$d_\nu(P_i,Q)^{\alpha_{L,\nu}(Q,Y)+2\delta} H_L(P_i)\to 0.$$ Since $\inf_{P\in Z(K)}d_\nu(P,Q)>0$ by assumption, all but finitely many elements of $(P_i)$ are in $Y\setminus Z$. Therefore $\alpha_{L,\nu}(Q,Y)+2\delta\not\in B^Z_{L,\nu}(Q,Y)$, and hence $\alpha_{L,\nu}(Q,Y)+2\delta\geqslant b^Z_{L,\nu}(Q,Y)$. So $\alpha_{L,\nu}(Q,Y)\geqslant b_{L,\nu}(Q,Y)$.
	
	We turn to (2). Indeed, for any $\delta>0$, we can find a sequence $(P_i)\in ((Y\setminus Z)(K)\setminus\{Q\})^\NN$ such that $d_\nu(P_i,Q)^{b^Z_{L,\nu}(Q,Y)+\delta} H_L(P_i)\to 0$. 
	Since $L$ verifies the Northcott property on $Y\setminus Z$, by passing to a subsequence if necessary we may assume that $H_L(P_i)\to\infty$. Therefore we must have $d_\nu(P_i,Q)\to 0$. This shows that $b^Z_{L,\nu}(Q,Y)+\delta\in A_{L,\nu}(Q,Y)$ and hence $b^Z_{L,\nu}(Q,Y)+\delta\geqslant \alpha_{L,\nu}(Q,Y)$. This gives the desired equality.
\end{proof}
\begin{remark}\label{rmk:big}
	Assume that $L$ is big, then some power of $L$ defines a rational map $X\dashrightarrow \PP_K^{N^\prime}$ which is birational onto the image on a Zariski open dense set $U$ of $X$ (see \cite[Corollary 2.2.7]{Lazarsfeld}). Hence the line bundle $L$ verifies the Northcott property on $U$. If $Q\in U(\overline{K})$ and $Y\cap U\neq\varnothing$, then it follows from Proposition \ref{prop:lowerbd} that $\alpha_{L,\nu}(Q,Y)=b^{X\setminus U}_{L,\nu}(Q,Y)$. For instance, this is the case if $X$ is toric, $Q$ is in the open orbit $\mathcal{T}$ and $Y$ intersects with $\mathcal{T}$, because we have $\mathcal{T}\subset U$ by torus action.  
\end{remark}

\subsection{Local accumulation}
The definition of essential (approximation) constant first appeared in the work of Pagelot \cite{Pagelot} concerning statistical problems of rational points, and was heavily used in the works \cite{Huang1}, \cite{Huang2}, \cite{Huang3}. To ease notation, we shall omit the subscripts $L,\nu$ in all $\alpha$-constants as they are considered fixed throughout. 

\begin{definition}[Pagelot \cite{Pagelot}]\label{def:essconst}
	With the notation in Definition \ref{def:appconst}, we define the \textit{essential constant} of $Q$ (with respect to $Y$) to be
	$$\aess(Q,Y)=\sup_{\substack{V\subset Y}} \alpha(Q,V),\index{alphaess@$\aess(Q)$}$$
	where $V$ ranges over all Zariski open dense subvarieties\footnote{In \cite[Définition 2.3]{Huang2} and \cite[Définition 2.2]{Huang3}, the essential constant was defined by taking the supremum amongst all dense constructable subsets. This turns out to be equivalent to Definition \ref{def:essconst}, because a constructable subset is dense if and only of it contains an open dense subset.} of $Y$ such that $$\inf_{P\in V(K)} d_\nu(P,Q)=0.$$ We write $\aess(Q)=\aess(Q,X)$. 
\end{definition}

\begin{definition}\label{def:achieve}
	For $Z$ a proper closed subvariety of $X$, 
	\begin{enumerate}
		\item if $\alpha(Q,Z)=\alpha(Q,X)$, we say that \emph{the best approximations (of $Q$) can be achieved} (or \emph{the constant $\alpha(Q,X)$ can be achieved}) on $Z$;
		\item if $\aess(Q,Z)<\aess(Q)$, we say that $Z$ is \emph{locally accumulating} (with respect to $X$);
		\item if $\alpha(Q,X)=\aess(Q,Z)<\alpha(Q,X\setminus Z)$, we say that \emph{the best approximations (of $Q$) are properly achieved} (or \emph{the constant $\alpha(Q,X)$ is properly achieved}) on $Z$;
		\item if $\alpha(Q,Z)=\aess(Q)$, we say that \emph{the generic best approximations (of $Q$) can be achieved} on $Z$. 
	\end{enumerate}

\end{definition}
The notion ``locally accumulating'' first appeared in \cite[Définition 2.1]{Huang1}\footnote{Definition \ref{def:achieve} (2) is stronger than \cite[Définition 2.1]{Huang1}. But they amount to the same thing for all varieties studied in these articles because all essential constants are attainable on some open subset.}.
And the other three conventions were implicitly stated in \cite{Huang2} and \cite{Huang3}.
\begin{remarks}\label{rmk:achieve}
	Let us assume that $\alpha(Q,X)<\infty$.
	\begin{enumerate}
		\item That the best approximations can be achieved on $Z$ amounts to saying that for any $\delta>0$,
		we can find an infinite sequence $(P_i)$ of $K$-rational points, all lying in $Z$, such that $d_\nu(P_i,Q)\to 0$ and $d_\nu(P_i,Q)^{\alpha(Q,X)+\delta} H_L(P_i)$ remains bounded. 
		\item If $Z$ is locally accumulating, then for any $\gamma$ such that $\aess(Q,Z)<\gamma<\aess(Q)$, some open dense subset $U$ of $X$ contains at most finitely many rational points which are solutions of the inequality \eqref{eq:alphadef}, whilst any dense open subset of $Z$ contains an infinite sequence of such solutions. In particular $Z\cap U=\varnothing$.
		\item If the best approximations are properly achieved on $Z$, then for any infinite sequence $(P_i)\in (X(K)\setminus\{Q\})^\NN$ such that $d_\nu(P_i,Q)\to 0$ and $d_\nu(P_i,Q)^{\gamma} H_L(P_i)$ being bounded hold simultaneously with $\alpha(Q,X)<\gamma<\alpha(Q,X\setminus Z)$, then all but finitely of the $P_i$ lie in $Z$. This means that we have to restrict ourselves to $Z$ while looking for a sequence of rational points to compute $\alpha(Q,X)$. In particular, $Z$ is also locally accumulating, because $\alpha(Q,X\setminus Z)\leqslant \aess(Q)$. Moreover, for every subvariety $W$ of $Z$, since $\alpha(Q,X)\leqslant\alpha(Q,W) \leqslant\aess(Q,W)\leqslant\aess(Q,Z)$ by Proposition \ref{prop:propertiesofalpha} (3), all these inequalities are in fact equalities. This means that $Z$ does not contain any locally accumulating subvariety with respect to itself, and that $Z$ is the union of irreducible locally accumulating subvarieties $Z_0$ of $X$, each one verifying $\alpha(Q,X)=\alpha(Q,Z_0)=\aess(Q,Z_0)$.
			\end{enumerate}\end{remarks}
		
		It is easy to see that there is no locally accumulating subvariety if $\dim X=1$ or if $X$ is an abelian variety (see Examples \ref{ex:Roth}). For a fixed rationally connected variety $X$ of dimension $\geqslant 2$ (having at least one $K$-rational point), in the spirit of Conjecture \ref{conj:mckinnon} and the Principle in Section \ref{se:Sectionintro}, we expect that there exists a tower of locally accumulating subvarieties (swept out by free rational curves of varying degrees) with different essential constants, and there are only finitely many possible values of these essential constants.
		In particular $\aess(Q)<\infty$. 
		We may view this as a local analogue of finiteness of the \emph{arithmetic stratification}, conjectured by Manin \cite[LGC]{Manin}.

\subsection{An example}\label{ex:conj}
		
We give a short self-contained analysis for $S_7$ -- the toric del Pezzo surface of degree $7$. For simplicity we work over $\QQ$. We compute the $\alpha$-constants with respect to the ample anticanonical line bundle $\omega_{S_7}^{-1}$ and $\nu=\infty$, and we shall omit all these subscripts. We can assume that $S_7$ is the blow-up of $\PP^2$ (with homogeneous coordinates $[x:y:z]$) in $[1:0:0]$ and $[0:1:0]$. It is easy to see that $\overline{\operatorname{Eff}}(S_7)$ is generated by the class of (the proper transform of) the line $z=0$ and those of the two exceptional divisors $E_1,E_2$, and is therefore simplicial. Let $Q=[1:1:1]$. Let $l_1$ (resp. $l_2$) be the proper transform of the line $(x=z)$ (resp. $(y=z)$) in $\PP^2$. They have the minimal $\omega_{S_7}^{-1}$-degree $2$ amongst all curves through $Q$. Note that any other lines passing through $Q$ have degree $3$ and they cover $S_7\setminus(l_1\cup l_2\cup E_1\cup E_2)$. 

As a special case of Theorem \ref{thm:mainthm}, we claim that for $S_7$,
the best approximations are properly achieved (resp. can be achieved) on the subvariety $l_1\cup l_2$ containing minimal degree rational curves through $Q$ which are free but not very free (resp. on each $l_i,i=1,2$). Every $l_i,i=1,2$ is also locally accumulating. Similarly to Theorem \ref{thm:generic}, the generic best approximations can be achieved on every general line through $Q$, which is very free of minimal degree. Moreover, in this example there is only one possible value for the essential constant of any locally accumulating subvariety.

First of all by using Proposition \ref{prop:propertiesofalpha}, we have the upper bound $$\alpha(Q,S_7)\leqslant \alpha(Q,l_i)=2.$$
For any open dense set $U$, take a line $l$ through $Q$ different from $l_1,l_2$ such that $l\cap U\neq\varnothing$. Since $l\setminus U$ is finite, we have $\alpha(Q,l)=\alpha(Q,l\cap U)=3$, which gives the upper bound $\alpha(Q,U)\leqslant \alpha(Q,l\cap U)=3$. Hence by Definition \ref{def:essconst},
$$\aess(Q)\leqslant 3.$$

Let $(e_i)_{i\in\{1,2\}}$ be the standard basis of $\RR^2$. The fan of $S_7$ consists of $5$ rays, whose primitive generators are
$$\rho_1=e_1,~\rho_2=e_2,~\rho_3=-\rho_1,~\rho_4=-\rho_2,~\rho_5=-\rho_1-\rho_2.$$ We choose the universal torsor $\pi:\mathfrak{T}\to S_7$ embedded into $\operatorname{Spec}(\operatorname{Cox}(S_7))=\mathbf{A}^5$. Write the coordinates $(X_1,\cdots,X_5)$ for $\mathbf{A}^5$. On the affine chart $U_{\sigma_0}$, where $\sigma_0=\RR_{\geqslant 0}\rho_1+\RR_{\geqslant 0}\rho_2$, according to \eqref{eq:unitorpara}, the map $\pi$ is given by
		$$\pi:(X_1,\cdots,X_5)\longmapsto \left(\frac{X_1}{X_3X_5},\frac{X_2}{X_4X_5}\right).$$
		For every $P\in \mathcal{T}(\QQ)$, let $P_0=(X_1,\cdots,X_5)\in \mathfrak{T}(\ZZ)$ be one lift into the torsor $\mathfrak{T}$ satisfying \eqref{eq:coprime}. Note that $X_i\neq 0$ for all $1\leqslant i\leqslant 5$. 
		Define the distance function
		\begin{align*}
		d_\infty(P,Q)&=\max\left(\left|\frac{X_1}{X_3X_5}-1\right|_\infty,\left|\frac{X_2}{X_4X_5}-1\right|_\infty\right)\\
		&=\max\left(\left|\frac{X_1-X_3X_5}{X_3X_5}\right|_\infty,\left|\frac{X_2-X_4X_5}{X_4X_5}\right|_\infty\right).
		\end{align*}
	     Note that $\omega_{S_7}^{-1}=\sum_{i=1}^{5}D_{\rho_i}$ (see \cite[\S4.3]{Fulton}). To estimate the toric height function $H_{\omega_{S_7}^{-1}}$ defined in $\S$\ref{se:toricheight}, we consider the trivialization of $\omega_{S_7}^{-1}$ on $U_{\sigma_0}$, which is determined by $m_{\omega_{S_7}^{-1}}(\sigma_0)=-\rho_1^*-\rho_2^*\in (\ZZ e_1+\ZZ e_2)^\vee$. By \eqref{eq:Dsigma}, this gives rise to $$\omega_{S_7}^{-1}(\sigma_0)=2D_{\rho_{3}}+2D_{\rho_4}+3D_{\rho_5}$$
	    sitting in the class of $\omega_{S_7}^{-1}$.
	    By Proposition \ref{prop:height}, we get
		$$H_{\omega_{S_7}^{-1}}(P)\geqslant |\mathbf{X}(P_0)^{\omega_{S_7}^{-1}(\sigma_0)}|_{\infty}= |X_3^2X_4^2X_5^3|_\infty.$$
		
		Suppose $P\neq Q$, then either $X_1\neq X_3X_5$ or $X_2\neq X_4X_5$. Without loss of generality assume the first one holds. We obtain
		\begin{align*}
			d_\infty(P,Q)^2 H_{\omega_{S_7}^{-1}}(P)&\geqslant \left|\frac{X_1-X_3X_5}{X_3X_5}\right|^2_\infty |X_3^2X_4^2X_5^3|_\infty\\ &=|X_1-X_3X_5|^2|_\infty|X_4^2X_5|_\infty\geqslant 1,
		\end{align*}
		which implies that $$\alpha(Q,S_7)\geqslant b^{S_7\setminus \mathcal{T}}(Q,S_7)\geqslant 2$$ by Proposition \ref{prop:lowerbd}. If moreover $P\not\in l_1\cup l_2$, then
		$$\min(|X_1-X_3X_5|_\infty,|X_2-X_4X_5|_\infty)\geqslant 1,$$ and hence \begin{align*}
			d_\infty(P,Q)^3 H_{\omega_{S_7}^{-1}}(P)&\geqslant \left|\frac{X_1-X_3X_5}{X_3X_5}\right|^2_\infty \left|\frac{X_2-X_4X_5}{X_4X_5}\right|_\infty |X_3^2X_4^2X_5^3|_\infty\\ &=|X_1-X_3X_5|^2|_\infty|X_2-X_4X_5|_{\infty}|X_4|_\infty\geqslant 1.
		\end{align*}
		This shows that $$\aess(Q)\geqslant\alpha(Q,S_7\setminus(l_1\cup l_2))\geqslant b^{S_7\setminus \mathcal{T}}(Q,S_7\setminus(l_1\cup l_2))\geqslant 3$$ by Proposition \ref{prop:lowerbd} and Definition \ref{def:essconst}.
		
		Gathering together these bounds and those we obtained in the beginning, we get
		$$\alpha(Q,l_1\cup l_2)=\aess(Q,l_1\cup l_2)=\alpha(Q,S_7)= 2,$$ $$\alpha(Q,S_7\setminus(l_1\cup l_2))=\aess(Q)=\alpha(Q,l)=3,~\text{ for all } l\neq l_1,l_2.$$
		This proves our claim.
		\footnote{However, if $\nu$ is ultrametric, we need to take more sections of $\omega_{S_7}^{-1}$ into account, as it turns out that the single one $\mathbf{X}(P_0)^{\omega_{S_7}^{-1}(\sigma_0)}$ is insufficient. This is one of the technical point of the proof of Theorem \ref{thm:mainthm}. We postpone the details to $\S$\ref{se:SectionTheorem}.}

\section{The canonical embedding of a number field}\label{se:SectionNumber}
In this section we collect some classical useful facts about algebraic number fields and we refer to standard textbooks (e.g. \cite[\S4.2]{Samuel}) for proofs. Recall that $[K:\QQ]=r_1+2r_2$, where $r_1$ (resp. $r_2$) is the number of real (resp. complex) places of $K$ and that each $\nu\in\mathcal{M}_K^\infty$ defines an embedding $\varsigma_\nu:K\to K_\nu$.
Then the map $\varsigma=(\varsigma_{\nu_1},\cdots,\varsigma_{\nu_{r_1+r_2}}), \nu_i\in\mathcal{M}_K^\infty$\index{Knusigmanu@$K_\nu,\varsigma_\nu$} embeds $K$ into the $\RR$-vector space $\RR^{r_1}\times \CC^{r_2}$.
We want to control uniformly any non-archimedean absolute value using archimedean ones. The following simple observation can be generalised to any fractional ideal, at the expense of adding some extra constant multiple.
\begin{lemma}\label{le:geometryofnumbers}
Let $x\in\mathcal{O}_K\setminus \{0\}$. Then for every $\nu\in\mathcal{M}_K^f$, $|x|_\nu^{-1}$ divides $\prod_{i=1}^{r_1+r_2}|x|_{\nu_i}$. In particular, $|x|_\nu\geqslant \prod_{i=1}^{r_1+r_2}|x|_{\nu_i}^{-1}$
\end{lemma}
\begin{proof}
 This follows directly from the product formula. Alternatively, let $\mathfrak{p}$ denote the prime ideal correspond to $\nu$. Let $m_x=\operatorname{ord}_\mathfrak{p}(x\mathcal{O}_K)$. Then in $\RR^{r_1}\times \CC^{r_2}$, $\varsigma(x\mathcal{O}_K)$ is a sublattice of $\varsigma(\mathfrak{p}^{m_x})$. 
	We thus obtain the following divisibility relation between their co-volumes:
	\begin{equation*}
			|x|_\nu^{-1}=\Norm(\mathfrak{p}^{m_x})\mid \Norm(x\mathcal{O}_K)=|N_{K/\QQ}(x)|_\infty=\prod_{i=1}^{r_1+r_2}|x|_{\nu_i}.\qedhere
	\end{equation*}
\end{proof}

\section{Determination of $\alpha$-constants and locally accumulating subvarieties} \label{se:SectionTheorem}
The goal of this section is to prove the following detailed version of Theorem \ref{thm:mainthm}. 
Throughout this section we write $\triangle(1)=\{\rho_1,\cdots,\rho_{n+r}\}$,
and we fix \begin{equation}\label{eq:Div}
D=\sum_{i=1}^{n+r}\mathfrak{a}_{i}D_{\rho_i}\index{fraka@$\mathfrak{a}_{i}$}
\end{equation} a $\mathcal{T}$-invariant divisor and the line bundle $L=\mathcal{O}_X(D)$ on $X=X(\triangle)$, which we assume to be smooth projective of dimension at least two and split over $K$.
By torus action, we can assume that the point to be approximated is the unit element $(1,\cdots,1)$ by Remark \ref{rmk:baselocus} (2).
Define $\beta\in\NN$ as
\begin{equation}\label{eq:beta}
\beta=\min_{\mathcal{P} \text{ centred primitive}} \deg_L \mathcal{P}.\index{beta@$\beta$}
\end{equation}
	\begin{theorem}\label{thm:mainthm2}
	Suppose that $X$ verifies Hypothesis $(*)$. 
	Let $Q_0=(1,\cdots,1)\in \mathcal{T}(K)$. \index{Q0@$Q_0$}
	\begin{enumerate}
		\item Suppose that $L$ is nef. Then for every place $\nu\in\mathcal{M}_K$, 
		we have $\alpha_{L,\nu}(Q_0,X)=\beta$.
		\item Suppose that $L$ is ample and $X\neq \PP^n$. 
		Then the constant $\alpha_{L,\nu}(Q_0,X)$ \emph{is properly achieved} on a proper closed subvariety $Y$ which is a finite union of $Y_i\simeq \PP^{\mathcal{N}_i}$, each one being the fibre $\PP^{\mathcal{N}_i}\times \{1\}$ of an open toric subvariety of $X$ isomorphic to $\PP^{\mathcal{N}_i}\times \Gm^{n-\mathcal{N}_i}$. Furthermore, if there exist two different such $Y_i,Y_j$, then $Y_i\cap Y_j=Q_0$.
	\end{enumerate}
\end{theorem}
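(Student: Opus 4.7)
The plan is to execute the two-step strategy described in the outline: first produce a smooth rational curve through $Q_0$ realising the upper bound $\alpha(Q_0, X) \leq \beta$, then prove a uniform Liouville-type lower bound $d_\nu(Q_0, P)^\beta H_L(P) \geq C$ via the universal torsor parametrization. For the upper bound I would pick, by Theorem \ref{thm:batyrev}, a centred primitive relation $\mathcal{P}$ with $\deg_L(\mathcal{P}) = \beta$. Following the discussion preceding Theorem \ref{th:BCFH}, this gives an open toric subvariety $\PP^m \times \Gm^{n-m} \subset X$ containing $Q_0$ (after torus translation), and the $\PP^m$-fibre $\ell$ through $Q_0$ is a smooth rational curve of $L$-degree $\beta$. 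Proposition \ref{prop:propertiesofalpha} then yields $\alpha(Q_0, X) \leq \alpha(Q_0, \ell) = \beta$.

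For the lower bound I would lift each $P \in \mathcal{T}(K) \setminus \{Q_0\}$ near $Q_0$ to an integral point $P_0 = (X_\varrho)_{\varrho \in \triangle(1)} \in \ctildeT(\mathcal{O}_K)$ for some $\mathbf{c} \in \mathcal{C}^r$ (Theorem \ref{thm:parabyunitor}). Choosing a $\ZZ$-basis $m_1, \ldots, m_n$ of $M$, take $d_\nu(Q_0, P) \asymp \max_i |\chi^{m_i}(P) - 1|_\nu$ on a neighbourhood, and rewrite each factor in Cox coordinates as
\[
\chi^{m_i}(P) - 1 = \mathbf{X}(P_0)^{-h(m_i)_-}\bigl(\mathbf{X}(P_0)^{h(m_i)_+} - \mathbf{X}(P_0)^{h(m_i)_-}\bigr),
\]
where $h : M \to \ZZ^{\triangle(1)}$ is the map from \eqref{eq:exactseqstr} and $h(m_i) = h(m_i)_+ - h(m_i)_-$ is its decomposition into positive and negative parts. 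Since $P \neq Q_0$, some $\mathbf{X}(P_0)^{h(m_i)_+} - \mathbf{X}(P_0)^{h(m_i)_-}$ is a non-zero element of a suitable fractional ideal, so Lemma \ref{le:geometryofnumbers} converts its $\nu$-adic size into a divisibility bound at the remaining archimedean places. Combined with $H_L(P) = \Norm(\mathbf{c}^D)^{-1}\prod_{\nu' \in \mathcal{M}_K^\infty} \sup_\sigma |\mathbf{X}(P_0)^{D(\sigma)}|_{\nu'}$ from Proposition \ref{prop:height}, the required inequality reduces to a coefficient-wise comparison between $D(\sigma)$ and $h(m_i)$.

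This comparison is where Hypothesis (*) enters. Simpliciality of $\overline{\operatorname{Eff}}(X)$ means exactly $r$ boundary classes $[D_\varrho]$ span its extremal rays and the remaining $n$ are positive combinations of these. Together with convexity of $\phi_D$ (Theorem \ref{thm:Demazure}), this positivity (the statement referred to in the outline as Proposition \ref{prop:sigmai00}) should let me choose, for a given $P$, a maximal cone $\sigma$ and a character $m_i$ so that the exponent of every $X_\varrho$ appearing in $\sup_\sigma |\mathbf{X}^{D(\sigma)}|_\nu$ dominates $\beta$ times the exponent of $X_\varrho$ in the distance factor $|\mathbf{X}^{h(m_i)_+} - \mathbf{X}^{h(m_i)_-}|_\nu$. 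Applied to the divisibility above, this gives $d_\nu(Q_0, P)^\beta H_L(P) \geq C > 0$, hence $\alpha(Q_0, X) \geq \beta$ by Proposition-Definition \ref{def:appconst}, and combined with the upper bound one obtains $\alpha_{L,\nu}(Q_0, X) = \beta$.

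For the identification of the accumulating subvariety when $L$ is ample and $X \neq \PP^n$, the ``ample'' half of Theorem \ref{thm:batyrev} tells me that the centred primitive relations of $L$-degree $\beta$ are \emph{exactly} the positive relations of minimal $L$-degree; to each such $\mathcal{P}_i$ I would attach the $\PP^{\mathcal{N}_i - 1}$-fibre $Y_i$ through $Q_0$ in the open toric subvariety $\PP^{\mathcal{N}_i - 1} \times \Gm^{n - \mathcal{N}_i + 1}$. For any $P \in (X \setminus Y)(K)$ close to $Q_0$, the positive relation dictated by the Cox coordinates of $P$ has $L$-degree strictly bigger than $\beta$, and refining the previous paragraph's argument with a uniform gap $\delta > 0$ yields $d_\nu(Q_0, P)^{\beta + \delta} H_L(P) \geq C' > 0$, showing $\alpha(Q_0, X \setminus Y) \geq \beta + \delta > \beta$ and hence that $Y$ is locally accumulating. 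The intersection property $Y_i \cap Y_j = \{Q_0\}$ for $i \neq j$ comes from the fact that two distinct centred primitive collections determine transverse $\PP$-fibre structures at $Q_0$. The hardest step will be the combinatorial positivity of paragraph three: it is what makes Hypothesis (*) indispensable and what converts the raw universal torsor estimate into the clean exponent $\beta$ (rather than some larger exponent coming from a non-primitive positive relation).
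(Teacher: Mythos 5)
Your proposal follows the paper's architecture closely and correctly identifies all the key tools (Theorem~\ref{thm:batyrev} for the minimal degree, Theorem~\ref{thm:parabyunitor} and Proposition~\ref{prop:height} for the torsor parametrization and height formula, Lemma~\ref{le:geometryofnumbers} for converting non-archimedean bounds to archimedean ones, and Proposition~\ref{prop:sigmai00} as the combinatorial engine). The upper bound and the first-part lower bound are, modulo details, the paper's argument: one point you should make explicit is that the ``right'' choice of characters $m_i$ is the dual basis $\varrho_1^\vee,\ldots,\varrho_n^\vee$ of the distinguished cone $\sigma_0$ furnished by Lemma~\ref{le:hypequivalence}, because then the parametrization~\eqref{eq:parahyp} separates the Cox variables cleanly into $y_i = X_i/\prod_j X_{n+j}^{\mathfrak{b}_{i,j}}$, so only the $X_{n+j}$ appear in denominators and the coefficient comparison becomes exactly $\deg_L(\mathcal{P}_{n+j})\geq \beta\,\mathfrak{b}_{i_0,j}$. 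You should also emphasize that for $\nu$ non-archimedean a single cone $\sigma$ does not suffice: after Lemma~\ref{le:geometryofnumbers} converts $|z_{i_0}|_\nu^{-1}$ into a product of $\max(|X_{i_0}|_{\nu'},|\prod_j X_{n+j}^{\mathfrak{b}_{i_0,j}}|_{\nu'})$ over archimedean $\nu'$, one needs to bound the height from below using \emph{both} $|\mathbf{X}^{D(\sigma_0)}|$ and $|\mathbf{X}^{D(\sigma_{i_0})}|$, where $\sigma_{i_0}$ is the cone adjacent to $\sigma_0$ across the facet missing $\varrho_{i_0}$.

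Where the proposal genuinely diverges from (and falls short of) the paper is the second half of the theorem. The phrase ``the positive relation dictated by the Cox coordinates of $P$'' does not correspond to anything: a point does not determine a positive relation. The paper's actual mechanism is a case analysis on \emph{which} coordinates $y_i$ differ from $1$ and whether the corresponding rays belong to a CPC of $L$-degree $\beta$. In Case~(I) (some $y_{i_0}\neq 1$ with $\varrho_{i_0}$ not in any CPC of degree $\beta$) the strict inequalities of Proposition~\ref{prop:sigmai00} directly give a uniform $\delta>0$. In Case~(II) (the only non-unit $y_i$ come from rays in CPCs of degree $\beta$) one has $y_{i_1}\neq 1$ and $y_{i_2}\neq 1$ for rays in two \emph{different} CPCs (else $P\in Y$), and the proof needs \emph{four} maximal cones $\sigma_1,\sigma_2,\sigma_3,\sigma_4$ together with a weighted arithmetic-geometric-mean trick on the archimedean factors, split further into three sub-cases by which of $|X_i|_{\nu'}$ and $|X_{n+i}\prod_j X_{n+j}^{\mathfrak{b}_{i,j}}|_{\nu'}$ realizes the maximum. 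Your sketch collapses all of this into one sentence; the gap $\delta$ does not come for free and is the technical core of that half. Finally, the intersection property $Y_i\cap Y_j=\{Q_0\}$ follows from Lemma~\ref{le:primitive1} (disjointness of distinct CPCs), not from a transversality argument.
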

\begin{remark}\label{rmk:sweptout}
	As seen from the discussion before Theorem \ref{th:BCFH}, every such $Y_i$ is swept out by a family of minimal rational curves corresponding to a centred primitive collection of $L$-degree $\beta$ and of cardinality $\mathcal{N}_i+1$ through $Q_0$, each one realizing $\alpha_{L,\nu}(Q_0,X)$.
\end{remark}
\subsection{Some more toric geometry}
We start by proving several technical lemmas.
We first of all translate Hypothesis $(*)$ in the beginning into a combinatorial one.
\begin{lemma}\label{le:hypequivalence}
	Hypothesis $(*)$ is equivalent to
	\begin{center}
		$(**)$ there exists $\sigma_0\in\triangle_{\max}$ such that all generators in $\triangle(1)\setminus\sigma_0(1)$ are linear combinations of those in $\sigma_0(1)$ with negative integer coefficients.\index{Hyp2@Hypothesis $(**)$}
	\end{center}
\end{lemma}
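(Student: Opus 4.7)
The plan rests on the standard observation, valid for any maximal cone $\sigma\in\triangle_{\max}$ of a smooth toric variety, that the $r$ classes $\{[D_\varrho]:\varrho\in\triangle(1)\setminus\sigma(1)\}$ form a $\ZZ$-basis of $\operatorname{Pic}(X)$. This follows by splitting \eqref{eq:exactseqstr} via the dual basis $\{\varrho_i^\vee\}$ of $\sigma(1)=\{\varrho_1,\dots,\varrho_n\}$: the principal-divisor relation
\[
\operatorname{div}(\chi^{\varrho_i^\vee})= D_{\varrho_i}+\sum_{\varrho\in\triangle(1)\setminus\sigma(1)}\langle\varrho_i^\vee,\varrho\rangle D_\varrho =0\quad\text{in }\operatorname{Pic}(X)
\]
expresses each $[D_{\varrho_i}]$ in terms of the complementary ones. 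I will use this throughout and read ``negative'' in (**) as ``non-positive'', which is the sharp condition.

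For (**) $\Rightarrow$ (*), I take $\sigma_0$ as supplied and use the above relation: by hypothesis $\langle\varrho_i^\vee,\varrho\rangle\leqslant 0$ for all $\varrho\in\triangle(1)\setminus\sigma_0(1)$, so $[D_{\varrho_i}]$ is a non-negative $\QQ$-combination of the basis $\{[D_\varrho]:\varrho\in\triangle(1)\setminus\sigma_0(1)\}$. Consequently the entire set of boundary classes, which generates $\overline{\operatorname{Eff}}(X)$, lies in the simplicial cone spanned by these $r$ linearly independent classes, proving (*).

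For (*) $\Rightarrow$ (**), I exploit simpliciality: $\overline{\operatorname{Eff}}(X)$ has exactly $r$ extremal rays, and since it is the convex hull of $\{[D_\varrho]\}_{\varrho\in\triangle(1)}$, I can pick $r$ distinct rays $\varrho_{n+1},\dots,\varrho_{n+r}$ with $[D_{\varrho_{n+j}}]$ on the $j$-th extremal ray. Let $\{\varrho_1,\dots,\varrho_n\}$ denote the remaining rays. Since $\{[D_{\varrho_{n+j}}]\}$ is a $\QQ$-basis of $\operatorname{Pic}(X)_\QQ$, the subspace $\QQ^{\{\varrho_{n+1},\dots,\varrho_{n+r}\}}\subset\QQ^{\triangle(1)}$ maps isomorphically onto $\operatorname{Pic}(X)_\QQ$ in \eqref{eq:exactseqstr}, so $M_\QQ\cap\QQ^{\{\varrho_{n+1},\dots,\varrho_{n+r}\}}=0$. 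If some nonzero $m'\in M_\QQ$ vanished on all $\varrho_1,\dots,\varrho_n$, its image $h(m')$ would lie in this intersection, and since $h$ is injective and $\{\varrho\in\triangle(1)\}$ spans $N_\QQ$, one reaches a contradiction. Hence $\{\varrho_1,\dots,\varrho_n\}$ is a $\QQ$-basis of $N_\QQ$; letting $\{m_i\}$ denote its dual basis, the relation $h(m_i)\in\ker i$ gives
\[
\varrho_{n+j}=\sum_{i=1}^n\langle m_i,\varrho_{n+j}\rangle\,\varrho_i,\qquad [D_{\varrho_i}]=-\sum_{j=1}^r\langle m_i,\varrho_{n+j}\rangle[D_{\varrho_{n+j}}],
\]
and simpliciality forces $\langle m_i,\varrho_{n+j}\rangle\leqslant 0$, which is exactly the expansion demanded by (**).

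The main obstacle is verifying that $\sigma_0:=\operatorname{Cone}(\varrho_1,\dots,\varrho_n)$ genuinely belongs to $\triangle_{\max}$, rather than merely being an $n$-dimensional geometric cone in $N_\RR$. For this I plan a completeness argument: pick a generic $v$ in the relative interior of $\sigma_0$; by completeness of $\triangle$, $v$ lies in the interior of some maximal cone $\tau$. If $\tau$ used a ray $\varrho_{n+l}\notin\sigma_0(1)$, writing $v=d\,\varrho_{n+l}+\sum_{k}d_k\varrho_{j_k}$ with $d>0$ and substituting $\varrho_{n+l}=-\sum_i\lambda_{il}\varrho_i$ (with $\lambda_{il}\geqslant 0$) yields
\[
v-d\,\varrho_{n+l}=\sum_{i=1}^n(1+d\lambda_{il})\varrho_i,
\]
a vector with strictly positive coordinates in the basis $\{\varrho_i\}$, hence in the interior of $\sigma_0$; yet by construction it lies in the at most $(n-1)$-dimensional cone spanned by the remaining rays of $\tau$, which is impossible. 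Therefore every ray of $\tau$ lies in $\sigma_0(1)$ and $\tau=\sigma_0$, completing the proof.
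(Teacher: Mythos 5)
Your strategy matches the paper's: translate Hypothesis (*) into the sign condition on the coefficients $a_{i,j}$ via the duality \eqref{eq:useofhypothesis1} between divisor-class relations and ray relations, then use completeness and regularity of the fan to promote $\sigma_0=\operatorname{Cone}(\varrho_1,\dots,\varrho_n)$ to a cone of $\triangle$. The paper is terse on the last point (``thanks to the completeness and regularity of the fan''), so you actually supply more detail than the paper does, and both your directions are organized correctly, including the correct reading of ``negative'' as ``non-positive''.

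There is, however, a genuine gap in your final impossibility claim. You assert that a vector lying in $\operatorname{int}(\sigma_0)$ cannot also lie in the $(n-1)$-dimensional cone spanned by the remaining rays of $\tau$. As a bare geometric statement this is false: e.g.\ in $\RR^2$ with $\sigma_0=\operatorname{Cone}(e_1,e_2)$, the ray $\RR_{\geqslant 0}(e_1+e_2)$ is one-dimensional yet meets $\operatorname{int}(\sigma_0)$. What makes your situation different, and what you must actually invoke, is that the remaining rays $\varrho_{j_1},\dots,\varrho_{j_{n-1}}$ of $\tau$ are themselves rays of $\triangle$, hence each is either some $\varrho_i\in\sigma_0(1)$ or a non-positive combination of $\sigma_0(1)$. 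Writing $w=\sum_k d_k\varrho_{j_k}$ with $d_k>0$ and pairing with the dual basis gives, for each $i$,
\[
0<\langle\varrho_i^\vee,w\rangle=\sum_k d_k\langle\varrho_i^\vee,\varrho_{j_k}\rangle,
\]
and every term with $\varrho_{j_k}\neq\varrho_i$ contributes $\leqslant 0$; so $\varrho_i$ must occur among the $\varrho_{j_k}$, forcing $n$ distinct rays into a set of size $n-1$. That is the contradiction, and it uses the sign hypothesis in an essential way that your phrasing elides. Incidentally, the subtraction $v-d\varrho_{n+l}$ is an unnecessary detour: applying the same pairing argument directly to the expansion $v=\sum_{\varrho\in\tau(1)}c_\varrho\varrho$ (with $c_\varrho>0$ since $v\in\operatorname{int}(\tau)$) immediately shows $\sigma_0(1)\subset\tau(1)$ and hence $\tau=\sigma_0$, which also yields $a_{i,j}\in\NN$ by regularity.
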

\begin{proof}
The pseudo-effective cone is generated by the boundary divisors by \cite[Lemma 15.1.8]{CoxLittleSchenck}:
$$\overline{\operatorname{Eff}}(X)=\RR_{\geqslant 0}[D_{\rho_1}]+\cdots+\RR_{\geqslant 0}[D_{\rho_{n+r}}]\subset \operatorname{Pic}(X)_\RR.\index{EffX@$\overline{\operatorname{Eff}}(X)$}$$ For every $rn$-tuple of real numbers $(\mathfrak{b}_{i,j})_{1\leqslant i\leqslant n,1\leqslant j\leqslant r}$\index{bij@$\mathfrak{b}_{i,j}$}, observe the following equivalence:
\begin{equation}\label{eq:useofhypothesis1}
  [D_{\rho_i}]=\sum_{j=1}^r \mathfrak{b}_{i,j} [D_{\rho_{n+j}}],1\leqslant i\leqslant n \Leftrightarrow\rho_{n+j}=-\sum_{i=1}^n \mathfrak{b}_{i,j} \rho_i,1\leqslant j\leqslant r.
\end{equation}
Indeed, both systems of equations are equivalent to the existence of $m_i\in M_\RR,1\leqslant i\leqslant n$ such that $$\langle m_i,\rho_k\rangle=\begin{cases}
1 &\text{ if } i=k,\\ 0 &\text{ otherwise},
\end{cases}1\leqslant k\leqslant n; \quad \langle m_i,\rho_{n+j}\rangle=-\mathfrak{b}_{i,j},1\leqslant j\leqslant r,$$ and in particular, $\{\rho_1,\cdots,\rho_n\}$ is a $\RR$-basis of $N_\RR$. This is obvious for the system on the right-hand-side of \eqref{eq:useofhypothesis1} by taking $\{m_1,\cdots,m_n\}$ to be the $\RR$-dual basis of $\{\rho_1,\cdots,\rho_n\}$ and applying $m_i$ to every $\rho_{n+j},1\leqslant j\leqslant r$. 
The left system results from the image of $h(m_i)\in\RR^{\triangle(1)},1\leqslant i\leqslant n$ via the map $i$ in the exact sequence \eqref{eq:exactseqstr} tensored by $\RR$. 

  Therefore, assuming Hypothesis $(*)$, that is, by relabelling if necessary, \begin{equation}\label{eq:pseudo}
  \overline{\operatorname{Eff}}(X)=\RR_{\geqslant 0} [D_{\rho_{n+1}}]+\cdots+\RR_{\geqslant 0} [D_{\rho_{n+r}}],
  \end{equation} then we get \begin{equation}\label{eq:bij0}
  \mathfrak{b}_{i,j}\geqslant 0 \text{ for all } 1\leqslant i\leqslant n,1\leqslant j\leqslant r, 
  \end{equation}
  and hence necessarily $\rho_1,\cdots,\rho_n$ form the set of generators of a maximal cone and all $\mathfrak{b}_{i,j}$ are integers, thanks to the completeness and regularity of the fan. We conclude that 
  \begin{equation}\label{eq:useofhypothesis2}
  \sigma_0=\RR_{\geqslant 0}\rho_{1}+\cdots+\RR_{\geqslant 0}\rho_{n}\in\triangle_{\max},\index{sigma0@$\sigma_0$}
  \end{equation}
  which means that Hypothesis $(**)$ holds with $\sigma_0$. On the other hand, assuming Hypothesis $(**)$, i.e. \eqref{eq:bij0}, we deduce \eqref{eq:pseudo} and that $\mathfrak{b}_{i,j}\in\NN$ in the same way. Now the equivalence between Hypotheses $(**)$ and $(*)$ is proved. 
\end{proof}

	Under Hypothesis $(**)$, let $\sigma_0$ be as in \eqref{eq:useofhypothesis2}. The right-hand-side of \eqref{eq:useofhypothesis1} gives rise to
\begin{equation}\label{eq:primitive0}
	\mathcal{P}_{n+j}:\rho_{n+j}+\sum_{i=1}^{n}\mathfrak{b}_{i,j}\rho_i=0,\quad 1\leqslant j\leqslant r,\index{Pn+j@$\mathcal{P}_{n+j}$}
\end{equation}
	which are all positive relations. If $L$ is globally generated, then we have (recall Definition \ref{def:degree}, \eqref{eq:Div} and $\beta$ \eqref{eq:beta})
\begin{equation}\label{eq:primitive1}
	\deg_L(\mathcal{P}_{n+j})=\mathfrak{a}_{n+j}+\sum_{i=1}^{n}\mathfrak{a}_{i}\mathfrak{b}_{i,j}\geqslant \beta
\end{equation}
by Theorem \ref{thm:batyrev} (2).
We keep using the notation $$\sigma_0,\mathcal{P}_{n+j},1\leqslant j\leqslant r,(\mathfrak{a}_i)_{1\leqslant i\leqslant n+r},(\mathfrak{b}_{i,j})_{1\leqslant i\leqslant n,1\leqslant j\leqslant r}$$ throughout the rest of this section.

We next prove lemmas about centred primitive collections. The first one seems well-known.
\begin{lemma}\label{le:primitive1}
	Let $\mathcal{I}_1,\mathcal{I}_2$ be two different centred primitive collections. Then $\mathcal{I}_1\cap \mathcal{I}_2=\varnothing$.
\end{lemma}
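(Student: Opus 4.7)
The plan is to argue by contradiction. Suppose $\mathcal{I}_1 \neq \mathcal{I}_2$ are two centred primitive collections sharing a ray $\varrho \in \mathcal{I}_1 \cap \mathcal{I}_2$. By the defining property of primitive collections, each proper subset $\mathcal{I}_j \setminus \{\varrho\}$ generates a cone $\sigma_j \in \triangle$ for $j=1,2$. The centredness condition gives
\[
-\varrho \;=\; \sum_{\tau \in \mathcal{I}_1 \setminus \{\varrho\}} \tau \;=\; \sum_{\tau \in \mathcal{I}_2 \setminus \{\varrho\}} \tau,
\]
so in particular $-\varrho$ lies in the intersection $\sigma_1 \cap \sigma_2$, which is a common face of both (since $\triangle$ is a fan).

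Next I would use the smoothness of $X$: the generators $\sigma_1(1)$ form part of a $\ZZ$-basis of $N$, hence the expression of any element of $\sigma_1$ as a non-negative $\RR$-linear combination of $\sigma_1(1)$ is unique. A point of the face $\sigma_1 \cap \sigma_2$ must, by this uniqueness, be a combination supported only on the rays of that face. Applied to $-\varrho$, whose coefficients on $\mathcal{I}_1 \setminus \{\varrho\}$ are all equal to $1$, this forces every ray of $\mathcal{I}_1 \setminus \{\varrho\}$ to lie in $(\sigma_1 \cap \sigma_2)(1) \subseteq \sigma_2(1) = \mathcal{I}_2 \setminus \{\varrho\}$. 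Hence $\mathcal{I}_1 \setminus \{\varrho\} \subseteq \mathcal{I}_2 \setminus \{\varrho\}$, and by symmetry the reverse inclusion also holds, giving $\mathcal{I}_1 = \mathcal{I}_2$, the desired contradiction.

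The main obstacle is making sure the uniqueness of the non-negative decomposition is deployed cleanly; it relies on the fact that the smoothness hypothesis on the fan makes the generators of each maximal (or any) cone $\ZZ$-linearly independent, which is the only place where the hypothesis on $X$ enters. Everything else is pure combinatorics of the fan together with the defining equation $\sum_{\tau \in \mathcal{I}_j} \tau = 0$ of a centred primitive relation.
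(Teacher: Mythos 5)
Your proof is correct and follows essentially the same approach as the paper: both proceed by contradiction, write $-\varrho$ as the sum of the remaining rays in each collection (both sums being strictly positive combinations of generators of cones of $\triangle$), and conclude the two cones must coincide, forcing $\mathcal{I}_1=\mathcal{I}_2$. The paper states this more tersely ("these are positive combinations of generators in cones, so they should be the same"), while you make explicit the mechanism — the common-face property of fans plus simpliciality (regularity) giving uniqueness of the non-negative decomposition — which is a helpful expansion of the same idea.
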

\begin{proof}
	If there exists $\rho\in\mathcal{I}_1\cap \mathcal{I}_2$, then by Definition \ref{def:primitive}, we can write
	$$-\rho=\sum_{\rho_i\in\mathcal{I}_1\setminus \{\rho\}}\rho_{i}=\sum_{\rho_{j}\in\mathcal{I}_2\setminus\{\rho\}}\rho_{j},$$
	which yields two expressions of $-\rho$ as positive combinations of generators of cones in $\triangle$. Therefore they are the same, i.e., $\mathcal{I}_1=\mathcal{I}_2$.
\end{proof}

For $1\leqslant i\leqslant n$, let $\sigma_{i}$ denote the maximal cone \emph{adjacent} to $\sigma_0$\index{sigmai@$\sigma_i$}, i.e.
\begin{equation}\label{eq:sigmai00}
\sigma_{i}\cap \sigma_0=\RR_{\geqslant 0}\rho_1+\cdots +
\widehat{\RR_{\geqslant 0}\rho_{i}}+\cdots+\RR_{\geqslant 0} \rho_n,
\end{equation}
where ``$\widehat{\quad }$'' means this term does not appear in the summation.
The existence of exactly $n$ such maximal cones follows from the completeness and the regularity of $\triangle$ \footnote{hence each codimension $1$ cone is the common face of a unique pair of maximal cones, see \cite[Lemma 8.9]{Salberger}}.
Write for some $1\leqslant j_{i}\leqslant r$,
\begin{equation}\label{eq:sigmai0}
\sigma_{i}=\RR_{\geqslant 0}\rho_1+\cdots +
\widehat{\RR_{\geqslant 0}\rho_{i}}+\cdots+\RR_{\geqslant 0} \rho_n+\RR_{\geqslant 0}\rho_{n+j_{i}}.\index{rhonji@$\rho_{n+j_{i}}$}
\end{equation}
Our second lemma is concerned with particular coefficients of the relations $(\mathcal{P}_{n+j})_{1\leqslant j\leqslant r}$.
\begin{lemma}\label{le:biji=1}
	Under Hypothesis $(**)$, for each $1\leqslant i\leqslant n$, we have $\mathfrak{b}_{i,j_{i}}=1$.
\end{lemma}
\begin{proof}
	 The transition matrix $\mathfrak{M}$ between $\sigma_0(1)$ and $\sigma_{i}(1)$ satisfies $|\det \mathfrak{M}|=|\mathfrak{b}_{i,j_{i}}|$. Then $|\mathfrak{b}_{i,j_{i}}|=1$ because the fan $\triangle$ is regular. Therefore necessarily $\mathfrak{b}_{i,j_{i}}=1$ since $\mathfrak{b}_{i,j_{i}}\geqslant 0$ under Hypothesis $(**)$.
\end{proof}
Our next lemma says that the maximal cone $\sigma_0$ contains all except one of the elements of any centred primitive collection, so do its adjacent cones.
\begin{lemma}\label{le:primitive3}
	Under Hypothesis $(**)$, for every centred primitive collection $\mathcal{I}$, we have
	$$\#(\mathcal{I}\setminus\sigma_0(1))=1.$$
	Moreover, for each $1\leqslant i_0\leqslant n$, (recall the index $j_{i_0}$ in \eqref{eq:sigmai0},) we have $\rho_{n+j_{i_0}}\in \sigma_{i_0}(1)\cap \mathcal{I}$ if and only if $\rho_{i_0}\in \mathcal{I}$.
\end{lemma}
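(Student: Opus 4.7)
The plan is to translate both assertions into the $\ZZ$-basis $\{\varrho_1, \ldots, \varrho_n\}$ of $N$ via the relations $\mathcal{P}_{n+j}: \varrho_{n+j} = -\sum_{i=1}^n \mathfrak{b}_{i,j}\varrho_i$ from Lemma-Notation \ref{le:primitive0}. Write $A=\mathcal{I}(1)\cap\sigma_0(1)$ and $\mathcal{I}(1)\setminus\sigma_0(1)=\{\varrho_{n+j_1},\ldots,\varrho_{n+j_s}\}$; the first goal is to show $s=1$. Substituting each $\mathcal{P}_{n+j_k}$ into the centred relation $\sum_{\varrho\in\mathcal{I}(1)}\varrho=0$ and comparing coefficients in the basis yields
\[
\sum_{k=1}^s \mathfrak{b}_{i,j_k}=\mathds{1}_A(\varrho_i)\in\{0,1\},\qquad 1\leqslant i\leqslant n.
\]
In particular each $\mathfrak{b}_{i,j_k}\in\{0,1\}$ and the sets $A_k:=\{\varrho_i:\mathfrak{b}_{i,j_k}=1\}$ form a partition of $A$.

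For the first assertion, assume $s\geqslant 2$. Each $\mathcal{C}_k:=\{\varrho_{n+j_k}\}\cup A_k$ is a nonempty \emph{proper} subset of $\mathcal{I}(1)$ (since $\varrho_{n+j_{k'}}\notin\mathcal{C}_k$ for $k'\neq k$) whose elements sum to zero by $\mathcal{P}_{n+j_k}$. By the primitive property of $\mathcal{I}$, the members of $\mathcal{C}_k$ generate a cone of $\triangle$. But every cone of a fan is strongly convex, so no positive combination of distinct primitive rays can vanish --- a contradiction. Hence $s=1$; write $j^*$ for the unique index, so that $\mathcal{I}(1)=\{\varrho_{n+j^*}\}\cup\{\varrho_i:\mathfrak{b}_{i,j^*}=1\}$.

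For the second assertion the claim reduces to $j^*=j_1\iff\mathfrak{b}_{i_0,j^*}=1$. The $(\Rightarrow)$ direction is bookkeeping: since $\sigma_{i_0}(1)=\{\varrho_1,\ldots,\widehat{\varrho_{i_0}},\ldots,\varrho_n,\varrho_{n+j_1}\}$ must be a $\ZZ$-basis of $N$ by regularity, expanding the change-of-basis determinant along the $i_0$-th row forces $\mathfrak{b}_{i_0,j_1}=1$. For the converse, the key combinatorial input is that in a complete fan the relative interiors of the cones partition $N_\RR$. Applying this to $-\varrho_{i_0}$: by primitivity, $\tau:=\operatorname{cone}(\mathcal{I}(1)\setminus\{\varrho_{i_0}\})\in\triangle$, and the centred relation reads $\sum_{\varrho\in\tau(1)}\varrho=-\varrho_{i_0}$ with all coefficients equal to $1$, so $-\varrho_{i_0}\in\operatorname{relint}(\tau)$. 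Meanwhile the expression $-\varrho_{i_0}=\varrho_{n+j_1}+\sum_{i\neq i_0}\mathfrak{b}_{i,j_1}\varrho_i$ shows $-\varrho_{i_0}\in\sigma_{i_0}$, so by uniqueness $\tau$ must be the face of $\sigma_{i_0}$ whose relative interior contains $-\varrho_{i_0}$. Since the only ray of $\tau(1)$ outside $\sigma_0(1)$ is $\varrho_{n+j^*}$ and the only such ray of $\sigma_{i_0}(1)$ is $\varrho_{n+j_1}$, one concludes $j^*=j_1$.

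The main obstacle is this converse direction: knowing that $\mathcal{I}(1)\setminus\{\varrho_{i_0}\}$ sits inside \emph{some} cone of $\triangle$ is not enough --- one must identify that cone with a face of the specific adjacent cone $\sigma_{i_0}$ singled out by $i_0$. The geometric observation that $-\varrho_{i_0}$ lies in the relative interior of a unique cone of the complete fan, combined with the positivity of the coefficients appearing in both expressions of $-\varrho_{i_0}$, is what makes this identification go through.
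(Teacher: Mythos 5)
Your proof is correct and follows essentially the same route as the paper's: both rest on the fact that in a complete regular fan a vector admits a unique expression as a positive combination of rays generating a single cone, and both use the determinant/regularity argument to get $\mathfrak{b}_{i_0,j_1}=1$. The only cosmetic difference is in the first assertion, where you argue by contradiction via strong convexity of the cone $\operatorname{cone}(\mathcal{C}_k)$, whereas the paper picks one $\varrho_{n+j_0}\in\mathcal{I}(1)\setminus\sigma_0(1)$ and directly identifies $\operatorname{cone}(\mathcal{I}(1)\setminus\{\varrho_{n+j_0}\})$ with the face of $\sigma_0$ supported on $\{\varrho_i:\mathfrak{b}_{i,j_0}>0\}$; both hinge on the same structural fact about fans.
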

\begin{proof}
	Since $\mathcal{I}\not\subset\sigma(1)$ for every $\sigma\in\triangle_{\max}$, let $\rho_{n+j_1}\in \mathcal{I}\setminus \sigma_0(1)$ for certain $1\leqslant j_1\leqslant r$. We can write 
\begin{equation}\label{eq:twoposcom}
	\rho_{n+j_1}=-\sum_{i=1}^n \mathfrak{b}_{i,j_1} \rho_i=-\sum_{\rho\in\mathcal{I}\setminus\{\rho_{n+j_1}\}}\rho.
\end{equation}
	As before this also gives two expressions of $-\rho_{n+j_1}$ in terms of positive combinations of bases of cones and hence they coincide. Hence $\mathcal{I}\setminus\{\rho_{n+j_1}\}\subset \sigma_0(1)$. 
	
	Now fix $i_0$ and recall the relation $\mathcal{P}_{n+j_{i_0}}$ in \eqref{eq:primitive0}.
	 If  $\rho_{n+j_{i_0}}\in \sigma_{i_0}(1)\cap \mathcal{I}$, which means $j_{i_0}=j_1$, then $\mathfrak{b}_{i_0,j_1}=1$ by Lemma \ref{le:biji=1}. So the equality \eqref{eq:twoposcom} shows that $\rho_{i_0}\in\mathcal{I}$. Conversely, if $\rho_{i_0}\in\mathcal{I}$, by moving terms in $\mathcal{P}_{n+j_{i_0}}$, we get
	$$\rho_{n+j_{i_0}}+\sum_{i\in\{1,\cdots,n\}\setminus\{i_0\}}\mathfrak{b}_{i,j_{i_0}}\rho_i=-\rho_{i_0}=\sum_{\rho\in\mathcal{I}\setminus \{i_0\}}\rho,$$
	an equality between two positive combinations of generators of $\sigma_{i_0}$. So they coincide and in particular $\rho_{n+j_{i_0}}\in\mathcal{I}$.
\end{proof}

The crucial use of Hypothesis $(**)$ will be clear from the next proposition.
It provides us some kind of ``strong positivity'' for the relations $(\mathcal{P}_{n+j})_{j=1}^r$. In geometric terms, every curve intersecting with the open orbit $\mathcal{T}$ and the boundary divisor $D_{\rho_{n+j}}$ has $L$-degree greater than some multiple of $\beta$ \eqref{eq:beta}.
\begin{proposition}\label{prop:sigmai00}
	Suppose that $L$ is globally generated. 
	Then for every $1\leqslant i_0\leqslant n,1\leqslant j_0\leqslant r$, we have, under Hypothesis $(**)$, (recall $D=\sum_{i=1}^{n+r}\mathfrak{a}_{i}D_i$, Definition \ref{def:degree} and $\beta$ \eqref{eq:beta})
	$$\deg_L(\mathcal{P}_{n+j_0})=\mathfrak{a}_{n+j_0}+\sum_{i=1}^{n}\mathfrak{a}_{i}\mathfrak{b}_{i,j_0}\geqslant \mathfrak{b}_{i_0,j_0}\beta.$$
	Suppose that $L$ is ample. If moreover there exists $i_0\in\{1,\cdots,n\}$ (resp. $j_0\in\{1,\cdots,r\}$) such that $\rho_{i_0}$ (resp. $\rho_{n+j_0}$) does not belong to any centred primitive collections of $L$-degree $\beta$, then for all $1\leqslant j_0\leqslant r$ (resp. for all $1\leqslant i_0\leqslant n$), we have
$$\deg_L(\mathcal{P}_{n+j_0})=\mathfrak{a}_{n+j_0}+\sum_{i=1}^{n}\mathfrak{a}_{i}\mathfrak{b}_{i,j_0}> \mathfrak{b}_{i_0,j_0}\beta.$$
\end{proposition}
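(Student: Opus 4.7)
The plan is to derive the desired bound from the convexity of the support function $\phi_D$ applied at the ray $\varrho_{n+j_0}$ using the adjacent maximal cone $\sigma_{i_0}$. First, I would fix $1\leqslant i_0\leqslant n$ and consider $\sigma_{i_0}$ from \eqref{eq:sigmai0}, whose rays are $\{\varrho_i : i\neq i_0\}\cup\{\varrho_{n+j_1}\}$ for the uniquely determined index $j_1=j_1(i_0)$. The regularity of both $\sigma_0$ and $\sigma_{i_0}$, combined with Hypothesis (**), forces $\mathfrak{b}_{i_0,j_1}=1$ by the same determinant argument used in the proof of Lemma \ref{le:primitive3}. Evaluating $m_D(\sigma_{i_0})$ against the relation $\varrho_{n+j_1}=-\sum_i\mathfrak{b}_{i,j_1}\varrho_i$, and using the identities $\langle m_D(\sigma_{i_0}),\varrho\rangle=-\mathfrak{a}_{\varrho}$ for $\varrho\in\sigma_{i_0}(1)$, yields
\[
\langle m_D(\sigma_{i_0}),\varrho_{i_0}\rangle = \deg_L(\mathcal{P}_{n+j_1}) - \mathfrak{a}_{i_0}.
\]

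The key step will be to apply convexity of $\phi_D$ (Theorem \ref{thm:Demazure}) at $\varrho_{n+j_0}$ relative to $\sigma_{i_0}$, which gives $-\mathfrak{a}_{n+j_0}\leqslant \langle m_D(\sigma_{i_0}),\varrho_{n+j_0}\rangle$. Expanding the right-hand side via $\varrho_{n+j_0}=-\sum_i\mathfrak{b}_{i,j_0}\varrho_i$ and substituting the formula above for $\langle m_D(\sigma_{i_0}),\varrho_{i_0}\rangle$ produces, after rearrangement,
\[
\deg_L(\mathcal{P}_{n+j_0})\geqslant \mathfrak{b}_{i_0,j_0}\deg_L(\mathcal{P}_{n+j_1}).
\]
Combined with $\deg_L(\mathcal{P}_{n+j_1})\geqslant\beta$ from Lemma \ref{le:primitive0}, this proves the first assertion.

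For the ample case, $\phi_D$ becomes strictly convex (Theorem \ref{thm:Demazure}), so the preceding inequality is strict whenever $\varrho_{n+j_0}\notin\sigma_{i_0}(1)$, that is, whenever $j_0\neq j_1(i_0)$; the subcase $\mathfrak{b}_{i_0,j_0}=0$ is immediate from $\deg_L(\mathcal{P}_{n+j_0})\geqslant\beta>0$. The main obstacle is the borderline case $j_0=j_1(i_0)$, which forces $\mathfrak{b}_{i_0,j_0}=1$ and places both $\varrho_{i_0}$ and $\varrho_{n+j_0}$ in the support of $\mathcal{P}_{n+j_0}$; here the convexity inequality becomes an equality and a different argument is needed. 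To overcome this I would invoke the hypothesis that at least one of $\varrho_{i_0}$ or $\varrho_{n+j_0}$ lies outside every centred primitive collection of $L$-degree $\beta$: were $\mathcal{P}_{n+j_0}$ itself a centred primitive relation of $L$-degree $\beta$, its support would contain both of these rays, contradicting the hypothesis. Hence $\mathcal{P}_{n+j_0}$ is not a centred primitive relation of minimal $L$-degree, and Theorem \ref{thm:batyrev} in its ample form then yields $\deg_L(\mathcal{P}_{n+j_0})>\beta=\mathfrak{b}_{i_0,j_0}\beta$, as required.
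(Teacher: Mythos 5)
Your proposal is correct and follows essentially the same route as the paper: compute $\langle m_D(\sigma_{i_0}),\varrho_{i_0}\rangle$ using $\mathfrak{b}_{i_0,j_1}=1$, apply (strict) convexity of $\phi_D$ at $\varrho_{n+j_0}$ relative to $\sigma_{i_0}$, and invoke Theorem~\ref{thm:batyrev} for the borderline case $j_0=j_1$. The one small stylistic improvement is that you handle both sub-hypotheses of the ample case uniformly by observing directly that both $\varrho_{i_0}$ and $\varrho_{n+j_0}$ lie in $\mathcal{P}_{n+j_0}(1)$ when $j_0=j_1$, whereas the paper routes the first sub-hypothesis through Lemma~\ref{le:primitive3}; both are valid.
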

\begin{proof}
	We begin with the first part, i.e. assume that $L$ is globally generated. We fix indices $i_0\in\{1,\cdots,n\},j_0\in\{1,\cdots,r\}$ and look at the maximal cone $\sigma_{i_0}$ \eqref{eq:sigmai00}.
If $j_0=j_{i_0}$, the desired inequality is nothing but \eqref{eq:primitive1} because $\mathfrak{b}_{i_0,j_{i_0}}=1$ by Lemma \ref{le:biji=1}. From now on suppose $j_0\neq j_{i_0}$. 
We write $\rho_{n+j_0}$ in terms of the generators of the cone $\sigma_{i_0}$, namely $\{\rho_1,\cdots,\widehat{\rho_{i_0}},\cdots,\rho_n,\rho_{n+j_{i_0}}\}$, using the fact that $\mathfrak{b}_{i_0,j_{i_0}}=1$:
\begin{align*}
\rho_{n+j_0}&=-\sum_{i=1}^n \mathfrak{b}_{i,j_0}\rho_i\\
&=
\mathfrak{b}_{i_0,j_0}\left(\sum_{\substack{i\in\{1,\cdots,n\}-\{i_0\}}} \mathfrak{b}_{i, j_{i_0}}\rho_i+\rho_{n+j_{i_0}}\right)-\sum_{\substack{i\in\{1,\cdots,n\}-\{i_0\}}} \mathfrak{b}_{i ,j_0}\rho_i\\
&=\mathfrak{b}_{i_0 ,j_0}\rho_{n+j_{i_0}}-\sum_{\substack{i\in\{1,\cdots,n\}-\{i_0\}}} (\mathfrak{b}_{i,j_0}-\mathfrak{b}_{i_0, j_0}\mathfrak{b}_{i,j_{i_0}})\rho_i.
\end{align*}
Using the assumption that $L$ is globally generated, the piecewise linear function $\phi_D$ is convex.
In particular, its graph lies ``below'' that of the linear function $\langle m_D(\sigma_{i_0}),\cdot\rangle$, where $$m_D(\sigma_{i_0})=-\left(\sum_{i\in\{1,\cdots,n\}\setminus\{i_0\}}\mathfrak{a}_i\rho_i^* +\mathfrak{a}_{n+j_{i_0}}\rho_{n+j_{i_0}}^*\right).$$
Applying $\phi_D$ to the above equality of $\rho_{n+j_0}$ we get (by \eqref{eq:convex1})
\begin{equation}\label{eq:eq1}
	\begin{split}
		-\mathfrak{a}_{n+j_0}&=\phi_D(\rho_{n+j_0})\\
		&\leqslant \langle m_D(\sigma_{i_0}),\rho_{n+j_0}\rangle\\ &= \left(\sum_{\substack{i\in\{1,\cdots,n\}-\{i_0\}}} \mathfrak{a}_{i}(\mathfrak{b}_{i,j_0}-\mathfrak{b}_{i_0, j_0}\mathfrak{b}_{i,j_{i_0}})\right)-\mathfrak{a}_{n+j_{i_0}}\mathfrak{b}_{i_0,j_0}.
	\end{split}
\end{equation}
So again by \eqref{eq:primitive1} and that $\mathfrak{b}_{i_0,j_{i_0}}=1$,
\begin{align}
		\mathfrak{a}_{n+j_0}+\sum_{i=1}^n \mathfrak{a}_{i}\mathfrak{b}_{i,j_0}&\geqslant \mathfrak{b}_{i_0,j_0}\left(\mathfrak{a}_{n+j_{i_0}}+\sum_{i=1}^n \mathfrak{a}_i\mathfrak{b}_{i,j_{i_0}}\right)\nonumber\\
		&=\mathfrak{b}_{i_0,j_0}\deg_L(\mathcal{P}_{n+j_{i_0}})\geqslant \mathfrak{b}_{i_0,j_0}\beta.\label{eq:eq2}
\end{align}

		Now assume that $L$ is ample and let $\rho_{i_0}$ be as in the assumption. That is, $\rho_{i_0}$ is not a member of any centred primitive $\mathcal{I}$ with $\deg_L(\mathcal{I})=\beta$. Recall $\sigma_{i_0}$ and the index $j_{i_0}$ \eqref{eq:sigmai0}. Fix $j_0\in\{1,\cdots,r\}$. If $j_0=j_{i_0}$, then by Lemma \ref{le:primitive3}, $\rho_{n+j_0}\not\in\mathcal{I}(1)$ for every centred primitive $\mathcal{I}$ of $L$-degree $\beta$. Hence by Theorem \ref{thm:batyrev} (1), $\deg_L(\mathcal{P}_{n+j_0})=\mathfrak{a}_{n+j_0}+\sum_{i=1}^n \mathfrak{a}_{i}\mathfrak{b}_{i,j_0}>b_{i_0,j_{i_0}}\beta=\beta$. 
		If $j_0\neq j_{i_0}$, then $\rho_{n+j_0}\not\in\sigma_{i_0}(1)$. So the strict convexity of the function $\phi_D$ \eqref{eq:convex2} yields that the inequality \eqref{eq:eq1} above is strict. 
		Now assume that $\rho_{n+j_0}$ satisfies the second assumption. That is, $\rho_{n+j_0}$ is not a member of any centred primitive $\mathcal{I}$ with $\deg_L(\mathcal{I})=\beta$. Fix $i_0\in\{1,\cdots,n\}$. If $j_0\neq j_{i_0}$, that is, $\rho_{n+j_0}\not\in\sigma_{i_0}(1)$, then as before the inequality \eqref{eq:eq1} is strict. If $j_0=j_{i_0}$, we have $\mathfrak{b}_{i_0,j_0}=1$. Since by assumption, the positive relation $\mathcal{P}_{n+j_0}$ is not centred primitive of $L$-degree $\beta$, the inequality \eqref{eq:eq2} is now strict by Theorem \ref{thm:batyrev} (1). 
\end{proof}
\subsection{Proof of Theorem \ref{thm:mainthm2}}\label{se:proofofmainthm2}
With all these preparations, we are going to prove our main theorem.
\subsubsection{Preliminaries and sketch of the proof}
	To ease notation we shall use the simplification
	$\alpha(Q_0,Y)=\alpha_{L,\nu}(Q_0,Y)$ and ``centred primitive collection'' will be abbreviated as ``CPC''.
	We shall use the affine neighbourhood $U_{\sigma_0}$ induced by the maximal cone $\sigma_0$ \eqref{eq:useofhypothesis1}, in which the parametrization is given by (see \eqref{eq:unitorpara}):
		\begin{equation}\label{eq:parahyp}
		\begin{split}
		\pi:\pi^{-1}U_{\sigma_0}&\longrightarrow U_{\sigma_0}\\
	(X_1,\cdots,X_{n+r})&\longmapsto(y_1,\cdots,y_n)= \left(\frac{X_1}{\bY_1},\cdots,\frac{X_n}{\bY_n}\right),
		\end{split}
	\end{equation} where  for the sake of convenience we introduce, for each $1\leqslant i\leqslant n$, 
$$\boldsymbol{Y}_i:=\prod_{j=1}^r
X_{n+j}^{\mathfrak{b}_{i,j}}.$$

	To fix a parametrisation, choose $\mathcal{C}$ a set of ideals of $\mathcal{O}_K$ as representatives of $\operatorname{Cl}_K$ and choose $\mathcal{D}=\{[D_{\rho_{n+1}}],\cdots,[D_{\rho_{n+r}}]\}$ as a basis for $\operatorname{Pic}(X)$ (see \eqref{eq:useofhypothesis1}). The choice of the set $\mathcal{C}$ and the equivalent Hypothesis $(**)$ (Lemma \ref{le:hypequivalence}) guarantee that for every $r$-tuple $\mathbf{c}\in\mathcal{C}^r$, $\mathbf{c}^{D_\rho}$ is an ideal of $\mathcal{O}_K$ for every $\rho\in\triangle(1)$, so that
	$\bigoplus_{\rho\in\triangle(1)}\mathbf{c}^{D_{\rho}}\subset \mathcal{O}_K^{\triangle(1)}$. By Theorem \ref{thm:parabyunitor}, for every $P=(y_1,\cdots,y_n)\in \mathcal{T}(K)$, we can choose $\mathbf{c}\in\mathcal{C}^r$ and $P_0 =(X_{1},\cdots,X_{n+r})\in \ctildeT(\mathcal{O}_K)$ to be one lift for $P$ satisfying $X_i\neq 0,1\leqslant i\leqslant n+r$ and \eqref{eq:coprime}.
		Recall that $Q_0=(1,\cdots,1)$. For $\nu\in\mathcal{M}_K$, we shall work with the $\nu$-adic distance function (see \eqref{eq:distance})
	\begin{equation}\label{eq:distdecreasing}
		d_\nu(P,Q_0)=\min\left(1,\max_{1\leqslant i\leqslant n}\left|\frac{X_i(P_0)}{\bY_i(P_0)}-1\right|_\nu\right).\index{distQ0@$d_\nu(\cdot,Q_0)$}
	\end{equation}

	A large part of the proof is devoted to showing inequalities of the form
	$$d_\nu(P,Q_0)^\gamma H_L(P)\geqslant C>0,$$
	uniformly for $P\in\mathcal{T}(K)\setminus\{Q_0\}$.
    Before going into the long details, let us sketch the main ideas. 
    In order for $P$ to approximate $Q_0$ with respect to a fixed place $\nu\in\mathcal{M}_K$, that is,
\begin{equation*}
    \max_{1\leqslant i\leqslant n}\left|\frac{X_i}{\bY_i}-1\right|_\nu \to 0,
\end{equation*}
    if $\nu\in\mathcal{M}_K^\infty$, the $\nu$-adic values of
    the denominators $(\bY_i,1\leqslant i\leqslant n)$ and the numerators $(X_i,1\leqslant i\leqslant n)$ both tend to infinity, and they have almost equal sizes. However in ultrametric cases things are different. It is their differences $(X_i-\bY_i,1\leqslant i\leqslant n)$ that should be sufficiently divisible by powers of the prime ideal $\mathfrak{p}$ associated to $\nu$, but both of them could have very small $\mathfrak{p}$-adic orders, and hence their $\nu$-adic values could be both bounded from below. 
    Salberger's height formula (Proposition \ref{prop:height}) furnishes us some flexibility of selecting maximal cones so as to control the growth at archimedean places of the numerators and the denominators at the same time. Meanwhile the decreasing of the distance \eqref{eq:distdecreasing} can also be controlled by the contribution from all archimedean places (Lemma \ref{le:geometryofnumbers}). It remains to carefully compare them and deduce that the growth of the height ``compensates for'' the decreasing of (some power of) the distance.
    Let us now put all these ideas into practice. 
    
    We first prove part (1) in $\S$\ref{se:thm61}, then prove part (2) in $\S$\ref{se:ample}, assuming stronger positivity condition on $L$. We shall fix throughout the rest of this section a place $\nu\in\mathcal{M}_K$ and $P\in \mathcal{T}(K)\setminus\{Q_0\}$ with a fixed lift $P_0\in \ctildeT(\mathcal{O}_K)$ for certain $\mathbf{c}\in\mathcal{C}^r$.
   \subsubsection{Assume that $L$ is nef}\label{se:thm61}
     Recall the maximal cone $\sigma_0$ \eqref{eq:sigmai00}. We now determine the divisor $D(\sigma_0)$. By \eqref{eq:Dsigma},
     \begin{equation}\label{eq:Dsigma00}
    \begin{split}
    	D(\sigma_0)&=D+\sum_{\rho\in\triangle(1)}\langle m_D(\sigma_0),\rho\rangle D_\rho\\
    	&=\sum_{j=1}^{r}\left(\mathfrak{a}_{n+j}+\sum_{i=1}^{n}\mathfrak{a}_i\mathfrak{b}_{i,j}\right)D_{\rho_{n+j}}=\sum_{j=1}^{r}\deg_L(\mathcal{P}_{n+j})D_{\rho_{n+j}},
    \end{split}
     \end{equation}
   viewed as an element in $\ZZ^{\triangle(1)}$. This gives rise to the section (see \eqref{eq:XP0})
\begin{equation}\label{eq:Dsigma0}
    \mathbf{X}^{D(\sigma_0)}=\prod_{j=1}^{n+r}X_{n+j}^{\deg_L(\mathcal{P}_{n+j})}.
\end{equation}

    We next observe that, since $P\neq Q_0$, there exists $i_0\in\{1,\cdots,n\}$ such that $y_{i_0}\neq 1$ (see \eqref{eq:parahyp}). Assume that $P$ lifts to $P_0\in \ctildeT(\mathcal{O}_K)\subset  \mathcal{O}_K^{\triangle(1)}$ for some $\mathbf{c}\in \mathcal{C}^r$. Then 
    \begin{equation}\label{eq:zneq0}
    	\bY_{i_0}(P_0)-X_{i_0}(P_0)\neq 0.
    \end{equation}
    
    First let us suppose that $\nu\in\mathcal{M}_K^\infty$.  Let $S_1\subset \mathcal{M}_K^\infty\setminus\{\nu\}$ be such that $\nu'\in S_1$ implies $$\left|\bY_{i_0}(P_0)\right|_{\nu'}\geqslant \left|X_{i_0}(P_0)\right|_{\nu'},$$ and let $S_2=\mathcal{M}_K^\infty\setminus (S_1\cup\{\nu\})$, so that by the product formula,
    \begin{equation}\label{eq:dnuinfty}
    \begin{split}
    	\left|\bY_{i_0}(P_0)-X_{i_0}(P_0)\right|_{\nu} \geqslant &\prod_{\substack{\nu^\prime\in\mathcal{M}_K^\infty\\ \nu'\neq\nu}}\left|\bY_{i_0}(P_0)-X_{i_0}(P_0)\right|_{\nu'}^{-1}\\ \gg& \prod_{\substack{\nu^\prime\in\mathcal{M}_K^\infty\\ \nu'\neq\nu}}\max\left(\left|\bY_{i_0}(P_0)\right|_{\nu'},\left|X_{i_0}(P_0)\right|_{\nu'}\right)^{-1}\\ =&\prod_{\nu'\in S_1}\left|\bY_{i_0}(P_0)\right|_{\nu'}^{-1}\times \prod_{\nu'\in S_2}\left|X_{i_0}(P_0)\right|_{\nu'}^{-1}.
    \end{split}    \end{equation}  Recall the maximal cone $\sigma_{i_0}$ \eqref{eq:sigmai00} adjacent to $\sigma_0$, the index $j_{i_0}$ \eqref{eq:sigmai0} and the positive relation $\mathcal{P}_{n+j_{i_0}}$ in \eqref{eq:primitive0}. Let $c_{n+j},j\in\{1,\cdots,r\}\setminus \{j_{i_0}\}$ (resp. $c_{i_0}$) denote the coefficient of the term $D_{\rho_{n+j}}$ (resp. $D_{\rho_{i_0}}$) in $D(\sigma_{i_0})$ (also viewed as an element in $\ZZ^{\triangle(1)}$), so that
\begin{equation}\label{eq:Di0}
\mathbf{X}^{D(\sigma_{i_0})}=X_{i_0}^{c_{i_0}}\prod_{j\in\{1,\cdots,r\}\setminus \{j_{i_0}\}}X_{n+j}^{c_{n+j}}.
\end{equation}
Then by \eqref{eq:convex1} and \eqref{eq:primitive1},
\begin{equation}\label{eq:ci0}
\begin{split}
c_{i_0}&=-\phi_D(\rho_{i_0})+\langle m_D(\sigma_{i_0}),\rho_{i_0}\rangle=\mathfrak{a}_{i_0}+\langle m_D(\sigma_{i_0}),\rho_{i_0}\rangle\\
&=\mathfrak{a}_{i_0}+\mathfrak{a}_{n+j_{i_0}}+\sum_{i\in\{1,\cdots,n\}\setminus \{i_0\}} \mathfrak{a}_{i}\mathfrak{b}_{i,j_{i_0}}=\deg_L(\mathcal{P}_{n+j_{i_0}})\geqslant \beta.\end{split}\end{equation}  Appealing to \cite[Lemma 8.9, Remark 11.23]{Salberger}, we have \begin{equation}\label{eq:sigma0sigmai0}
    \frac{\mathbf{X}^{D(\sigma_{i_0})}}{\mathbf{X}^{D(\sigma_0)}}=\left(\frac{X_{i_0}}{\bY_{i_0}}\right)^{\deg_L(\mathcal{P}_{n+j_{i_0}})}.
\end{equation}
On the other hand, it follows from Proposition \ref{prop:sigmai00} that 
\begin{equation}\label{eq:sigmai00bd}
	\deg_L(\mathcal{P}_{n+j})\geqslant  \mathfrak{b}_{i_0,j}\deg_L(\mathcal{P}_{n+j_{i_0}}),\quad 1\leqslant j\leqslant r. 
\end{equation}
    Now using the height formula (Proposition \ref{prop:height}), combined with \eqref{eq:dnuinfty} \eqref{eq:sigma0sigmai0} \eqref{eq:sigmai00bd} and the product formula, we obtain
    \begin{equation}\label{eq:dnuinftybd}
    	    \begin{split}
    		&\Norm(\mathbf{c}^D)H_L(P)d_\nu(P,Q_0)^{\deg_L(\mathcal{P}_{n+j_{i_0}})}\\ \gg & \left|\frac{\bY_{i_0}(P_0)-X_{i_0}(P_0)}{\bY_{i_0}(P_0)}\right|_\nu^{\deg_L(\mathcal{P}_{n+j_{i_0}})}\prod_{\nu^\prime\in S_1\cup\{\nu\}} |\mathbf{X}(P_0)^{D(\sigma_0)}|_{\nu^\prime}\times \prod_{\nu^\prime\in S_2} |\mathbf{X}(P_0)^{D(\sigma_{i_0})}|_{\nu^\prime}\\
    		\gg& \prod_{\nu^\prime\in\mathcal{M}_K^\infty}\prod_{j=1}^{r} \left|X_{n+j}(P_0)^{\deg_L(\mathcal{P}_{n+j})-\mathfrak{b}_{i_0,j}\deg_L(\mathcal{P}_{n+j_{i_0}})}\right|_{\nu'}\gg 1.
    	\end{split}
    \end{equation}
  
   Next let us suppose that $\nu\in\mathcal{M}_K^f$. Adjusting the set $S_1$ or $S_2$ to contain $\nu$, we notice by Lemma \ref{le:geometryofnumbers} that
   \begin{equation}\label{eq:padicdistance}
   	\begin{split}
   		\left|\bY_{i_0}(P_0)-X_{i_0}(P_0)\right|_\nu
   		\geqslant &\prod_{\nu^\prime\in\mathcal{M}_K^\infty}\left|\bY_{i_0}(P_0)-X_{i_0}(P_0)\right|_{\nu^\prime}^{-1}\\
   		\gg &\prod_{\nu'\in S_1}\left|\bY_{i_0}(P_0)\right|_{\nu'}^{-1}\times \prod_{\nu'\in S_2}\left|X_{i_0}(P_0)\right|_{\nu'}^{-1}.
   	\end{split}
   \end{equation} Based on a similar argument as \eqref{eq:dnuinftybd}, we also obtain \begin{equation}\label{eq:nuhtbd}
   \Norm(\mathbf{c}^D)H_L(P)d_\nu(P,Q_0)^{\deg_L(\mathcal{P}_{n+j_{i_0}})} \gg 1.
\end{equation}

   The lower bound \eqref{eq:nuhtbd} being uniform for $P\in\mathcal{T}(K)\setminus \{Q_0\}$ and $\mathbf{c}\in\mathcal{C}^r$, we have proven that, by Proposition \ref{prop:lowerbd}, $$\alpha(Q_0,X)\geqslant b^{X\setminus\mathcal{T}}_{L,\nu}(Q_0,X)\geqslant \min_{1\leqslant i_0\leqslant n} \deg_L(\mathcal{P}_{n+j_{i_0}})=\beta.$$
    On the other hand, combining with Proposition \ref{prop:propertiesofalpha}, we get an equality since
    $$\alpha(Q_0,X)\leqslant \alpha(Q_0,l)=\beta,$$
    where $l$ is any smooth rational curve through $Q_0$ whose class is represented by a CPC of $L$-degree $\beta$ (Theorem \ref{thm:positiverelation}). This finishes the proof of Theorem \ref{thm:mainthm2} (1).
    
    \subsubsection{Assume from now on that $L$ ample and $X\neq \PP^n$}\label{se:ample}
     Let $Y$ be the union of all $Y_i$, each one being the fibre over $1\in \Gm^{n-\mathcal{N}_i}$ of the toric subvariety $\PP^{\mathcal{N}_i}\times \Gm^{n-\mathcal{N}_i}$ corresponding to a CPC $\mathcal{I}_i$ of $L$-degree $\beta$ and of cardinality $\sharp \mathcal{I}_i=\mathcal{N}_i+1$. By Theorem \ref{thm:batyrevprimitive}, $Y$ is non-empty. Moreover $Y$ is proper Zariski closed by Example \ref{rmk:primitivecol} and Remark \ref{rmk:sweptout}.
         
     To analyse the structure of $Y$, let us fix any CPC $\mathcal{I}$. By Lemma \ref{le:primitive3}, the cone $\sigma_0$ contains $\sharp\mathcal{I}-1$ elements of $\mathcal{I}$, so by relabelling one can assume that $\mathcal{I}\cap \sigma_0(1)=\{\rho_1,\cdots,\rho_{\sharp\mathcal{I}-1}\}$. Let $Y_{\mathcal{I}}$ be the fibre $\PP^{\sharp\mathcal{I}-1}\times \{1\}$ of the open toric subvariety $\PP^{\sharp\mathcal{I}-1}\times \Gm^{n-\sharp\mathcal{I}+1}$ associated to the subfan constructed from $\mathcal{I}$.  In the parametrization given by $U_{\sigma_0}$ \eqref{eq:parahyp}, we have
     \begin{equation*}
     (U_{\sigma_0}\cap Y_{\mathcal{I}})(K)=(\overbrace{*,\cdots,*}^{ \sharp\mathcal{I}-1\text{ coordinates}},1,\cdots,1).
     \end{equation*}
     We conclude from this analysis that 
     \begin{equation}\label{eq:Y}
     (U_{\sigma_0}\cap Y)(K)=\bigcup_{\mathcal{I} \text{ CPC}:\deg_L(\mathcal{I})=\beta} \{(y_1,\cdots,y_n)\in K^n:\rho_i\in\sigma_0(1)\setminus \mathcal{I}\Rightarrow y_i=1\}.
     \end{equation}
     By Lemma \ref{le:primitive1}, any two different CPCs $\mathcal{I}_1,\mathcal{I}_2$ have no common generator, so $Y_{1}\cap Y_{2}=Q_0$. 
         
    In what follows, we shall prove that there exists $\delta>0$ such that, uniformly for every place $\nu$ and for every $P\in (\mathcal{T}\setminus Y)(K)$,
    \begin{equation}\label{eq:goalcase12}
    d_\nu(P,Q_0)^{\beta+\delta}H_L(P)\geqslant C_{K,\delta,\triangle}
    \end{equation} for certain constant $C_{K,\delta,\triangle}>0$ depending only on $\delta>0$, the number field $K$, and the combinatorial data of the fan $\triangle$. Then Proposition \ref{prop:lowerbd} implies that
    $$\alpha(Q_0,X\setminus Y)=b^{X\setminus\mathcal{T}}_{L,\nu}(Q_0,X\setminus Y)\geqslant\beta+\delta>\beta.$$ 
    On the other hand, we note that, every Zariski open dense subset $U$ of $Y$ is covered by rational lines $l$ in each piece of $\PP^{\mathcal{N}_i}$ intersecting with $U$ and passing through $Q_0$ of degree $\beta$ (see Section \ref{se:primitive}), and so $\alpha(Q_0,U)=\beta$. This implies $$\alpha(Q_0,X\setminus Y)>\beta=\aess(Q_0,Y)=\alpha(Q_0,X).$$ So the constant $\alpha(Q_0,X)$ is properly achieved on $Y$ (Definition \ref{def:achieve} (3)) and \emph{a fortiori} $Y$ is locally accumulating (Definition \ref{def:achieve} (2)) by Remark \ref{rmk:achieve} (3).

    We separate our discussion into the following two cases regarding the geometry of $X$ and the coordinates of $P$. 
    
    {\bfseries Case (I).}
    \emph{Assume that there is some $\rho_{i_0}\in\sigma_0(1)$, $i_0\in\{1,\cdots,n\}$, which is not a member of any CPC of $L$-degree $\beta$, and such that $y_{i_0}(P)\neq 1$.} 
   It suffices to take $\delta_1>0$ such that the minimum of $\deg_L(\mathcal{P}_{n+j_{i_0}})$ amongst all such $i_0$ is $\geqslant \beta+\delta_1$, so that, according to \eqref{eq:nuhtbd}, $$\Norm(\mathbf{c}^D)H_L(P) d_\nu(P,Q_0)^{\beta+\delta_1}\gg 1.$$ 
   
    {\bfseries Case (II).} \emph{Assume that for every $\rho_i\in\sigma_0(1)$, either $\rho_i$ belongs to a CPC of $L$-degree $\beta$, or else $y_i(P)=1$.} 
    We would like to show the existence of a $\delta_2>0$ such that $$\Norm(\mathbf{c}^D)H_L(P) d_\nu(P,Q_0)^{\beta+\delta_2}\gg 1$$ uniformly for all such $P$.
    
    Consider the collection of pairs of generators of $\sigma_0$: \begin{equation}\label{eq:Gindices}
    	\begin{split}
    		 &\mathfrak{G}_\triangle=\{(i,k)\in\{1,\cdots,n\}^2:\rho_i\in\mathcal{I}_i,\rho_k\in\mathcal{I}_k\\ &\qquad\text{ where } \mathcal{I}_i,\mathcal{I}_k \text{ are two different CPCs of } L\text{-degree }\beta\}.\index{Gfrak@$\mathfrak{G}_\triangle$}
    	\end{split}
    \end{equation}
    Then under the assumption of \textbf{Case (II)}, necessarily $\mathfrak{G}_\triangle\neq\varnothing$ by \eqref{eq:Y}, and there exists $(i_0,k_0)\in\mathfrak{G}$ such that $y_{i_0}(P),y_{k_0}(P)\neq 1$. 
    (Otherwise $P$ would be in $Y(K)$.)  
    By relabelling if necessary, we may assume that $(i_0,k_0)=(1,2)$ and we write, for $i=1,2$, (Lemma \ref{le:primitive3}) $\{\rho_{n+i}\}=\mathcal{I}_i\setminus \sigma_0(1)$ so that $\mathcal{I}_i=\mathcal{P}_{n+i}$ and $\deg_L(\mathcal{P}_{n+1})=\deg_L(\mathcal{P}_{n+2})=\beta$.

    Concerning the parametrisation \eqref{eq:parahyp}, we have
    $$\bY_1=X_{n+1}\prod_{j=3}^{r}X_{n+j}^{\mathfrak{b}_{1,j}},\quad \bY_2=X_{n+2}\prod_{j=3}^{r}X_{n+j}^{\mathfrak{b}_{2,j}},$$
    where, if $r=2$, the product $\prod_{j=3}^{r}$ is understood as being $1$.  
    If $r\geqslant 3$, we also need to analyse the exponents $\mathfrak{b}_{i,j}$ and the relations $\mathcal{P}_{n+j}$ for $3\leqslant j\leqslant r$. Fix such a $j$. If $\rho_{n+j}$ belongs to some CPC $\mathcal{I}$, then $\mathfrak{b}_{i,j}=0$ for $i=1,2$ since otherwise $\rho_i\in\mathcal{I}_i\cap \mathcal{I}$, a contradiction to Lemma \ref{le:primitive1} and Lemma \ref{le:primitive3}. 
    If $\rho_{n+j}$ does not belong to any CPC of $L$-degree $\beta$, then by Proposition \ref{prop:sigmai00}, we conclude that in any case, 
    \begin{equation}\label{eq:bij}
    	\deg_L(\mathcal{P}_{n+j})>\mathfrak{b}_{i,j}\beta,\quad i\in\{1,2\}, j\geqslant 3.
    \end{equation}
    
    To derive lower bounds for the product of the height and the distance, we may assume as before that $\nu\in\mathcal{M}_K^f$, the archimedean cases being similar.
    A key tool is the elementary inequality 
    \begin{equation}\label{eq:geomineq}
    	\max(Z_1,Z_2)\geqslant Z_1^{\lambda_1}Z_2^{\lambda_2},\text{ for every } Z_1,Z_2,\lambda_1,\lambda_2>0 \text{ with }\lambda_1+\lambda_2=1.
    \end{equation}
    Recall that since $y_1(P),y_2(P)\neq 1$, we have $$X_1(P_0)\neq \bY_1(P_0),\quad X_2(P_0)\neq \bY_2(P_0).$$
    Using Lemma \ref{le:geometryofnumbers} and combining \eqref{eq:geomineq} with $\lambda_1=\lambda_2=\frac{1}{2}$, we can bound the distance from below via:
    \begin{equation}\label{eq:yi1}
    	\begin{split}
    		d_\nu(P,Q_0)&\geqslant \max_{i\in\{1,2\}}(|y_i(P)-1|_\nu)\\ &= \max_{i\in\{1,2\}}\left(\left|\frac{X_i(P_0)-\bY_i(P_0)}{\bY_i(P_0)}\right|_\nu\right)\\
    		&\geqslant  \max_{i\in\{1,2\}}\left(|X_i(P_0)-\bY_i(P_0)|_\nu\right) \\
    		&\geqslant \max_{i\in\{1,2\}}\left(\prod_{\nu^\prime\in\mathcal{M}_K^\infty}|X_i(P_0)-\bY_i(P_0)|_{\nu^\prime}\right)^{-1}\\
    		&\gg \max_{i\in\{1,2\}}\left(\prod_{\nu^\prime\in\mathcal{M}_K^\infty}\max(|X_i(P_0)|_{\nu^\prime},|\bY_i(P_0)|_{\nu^\prime})\right)^{-1}\\
    		&\gg \prod_{i=1}^{2}\left(\prod_{\nu^\prime\in\mathcal{M}_K^\infty}\max(|X_i(P_0)|_{\nu^\prime},|\bY_i(P_0)|_{\nu^\prime})\right)^{-\frac{1}{2}}\\
    		&=\prod_{\nu^\prime\in\mathcal{M}_K^\infty}\left(\prod_{i=1}^{2}\max(|X_i(P_0)|_{\nu^\prime},|\bY_i(P_0)|_{\nu^\prime})\right)^{-\frac{1}{2}}.
    	\end{split}
    \end{equation}

    For $i=1,2$, let $\sigma_i$ be the maximal adjacent cone such that $\sigma_0(1)\setminus \sigma_i(1)=\{\rho_i\}$, which is equivalent to $\rho_{n+i}\in \sigma_i(1)\setminus \sigma_0(1)$ by Lemma \ref{le:primitive3}. Additionally, consider $\sigma_{3}$ (resp. $\sigma_4$), the maximal cone adjacent to $\sigma_1$ (resp. $\sigma_2$) such that $\sigma_1(1)\setminus \sigma_3(1)=\{\rho_2\}$ (resp. $\sigma_2(1)\setminus\sigma_4(1)=\{\rho_1\}$).
    We claim that $\rho_1\not\in\sigma_3(1)$ and $\rho_2\not\in \sigma_4(1)$.
    Otherwise, for example if $\rho_1\in \sigma_3(1)$, since $\rho_{n+1}\in\sigma_1(1)\cap\sigma_3(1)$ and $\mathcal{I}_1\neq\mathcal{I}_2$, then we would have $\mathcal{I}_1\subset \sigma_3(1)$, which is a contradiction to Definition \ref{def:primitive}.
    Then according to the height formula (Proposition \ref{prop:height}), we have
    \begin{equation}\label{eq:heightlower0}
    \Norm(\mathbf{c}^D)H_L(P)\geqslant \prod_{\nu^\prime\in\mathcal{M}_K^\infty}\max_{0\leqslant m\leqslant 4 }\left(\left|\mathbf{X}(P_0)^{D(\sigma_m)}\right|_{\nu^\prime}\right).
    \end{equation}

In order to exploit relations between the sections above, we first write the generators of the rays in $\triangle(1)\setminus\sigma_1(1)$ in terms of those in $\sigma_1(1)$. 
\begin{align*}
	\rho_{1}&=-\sum_{i\in \mathcal{I}_1\setminus\{1\}}\rho_i -\rho_{n+1},\\
	\rho_{n+j}&=-\sum_{\substack{1\leqslant i\leqslant n \\i\in \mathcal{I}_1\setminus\{1\}}} (\mathfrak{b}_{i,j}-\mathfrak{b}_{1,j})\rho_i-\sum_{\substack{1\leqslant i\leqslant n \\ i\notin \mathcal{I}_1}} \mathfrak{b}_{i,j}\rho_i+\mathfrak{b}_{1,j}\rho_{n+1}.
\end{align*}
Therefore, if $c_2\in \mathbb{N}$ denotes the multiplicity of $D_{\rho_2}$ in $D(\sigma_3)$, by ampleness we have $c_2\geqslant 1$. Using again the fact that $\mathcal{I}_1\cap\mathcal{I}_2=\varnothing$ (Lemma \ref{le:primitive1}) and appealing to \cite[Remark 11.23]{Salberger}, we deduce the relation \begin{equation}\label{eq:Dsigma3sigma1}
	\mathbf{X}^{D(\sigma_3)}=\left(\frac{X_2}{\boldsymbol{Y}_2}\right)^{c_2}\mathbf{X}^{D(\sigma_1)}.
\end{equation}
Analysing the maximal cone $\sigma_2$ leads similarly to the relation
\begin{equation}\label{eq:Dsigma4sigma2}
		\mathbf{X}^{D(\sigma_4)}=\left(\frac{X_1}{\boldsymbol{Y}_1}\right)^{c_1}\mathbf{X}^{D(\sigma_2)},
\end{equation} where $c_1\geqslant 1$ denotes the multiplicity of $D_{\rho_1}$ in $D(\sigma_4)$.
   The maximum amongst the four sections of $L$ in \eqref{eq:heightlower0} with respect to every archimedean place $\nu^\prime$ will depend heavily on whether the maximum is achieved on $|X_i|_{\nu^\prime}$ or on $|\boldsymbol{Y}_i|_{\nu^\prime}$ for $i=1,2$ in \eqref{eq:yi1}.
    For this purpose fix such a $\nu^\prime\in\mathcal{M}_K^\infty$ in each of the following subcases.

        $$\text{\bfseries Subcase (II.1).} \quad  \max\left(|X_i(P_0)|_{\nu^\prime},\left|\boldsymbol{Y}_i(P_0)\right|_{\nu^\prime}\right)=|\bY_i(P_0)|_{\nu^\prime},\quad i=1,2.$$
       In this case  $$\max_{0\leqslant m\leqslant 4 }\left(\left|\mathbf{X}(P_0)^{D(\sigma_m)}\right|_{\nu^\prime}\right)=\left|\mathbf{X}(P_0)^{D(\sigma_0)}\right|_{\nu^\prime}.$$ 
        
$$ \text{\bfseries Subcase (II.2).}\quad \max\left(|X_i(P_0)|_{\nu^\prime},\left|\boldsymbol{Y}_i(P_0)\right|_{\nu^\prime}\right)=|X_i(P_0)|_{\nu^\prime},\quad i=1,2.$$ Then we fix $\lambda_1,\lambda_2>0,\lambda_1+\lambda_2=1$ once and for all such $\nu'$. On combining \eqref{eq:sigma0sigmai0} \eqref{eq:Dsigma3sigma1} \eqref{eq:Dsigma4sigma2},  for any $\delta>0$ sufficiently small such that
\begin{equation}\label{eq:delta21}
	\min\left(\lambda_1\beta+\lambda_2c_1,\lambda_1c_2+\lambda_2\beta\right)\geqslant \frac{1}{2}(\beta+\delta),
\end{equation} we obtain
\begin{align*}
	&\frac{\max_{0\leqslant m\leqslant 4 }\left(\left|\mathbf{X}(P_0)^{D(\sigma_m)}\right|_{\nu^\prime}\right)}{|X_1(P_0)X_2(P_0)|_{\nu'}^{\frac{1}{2}(\beta+\delta)}}\\ \geqslant &\frac{\left|\mathbf{X}(P_0)^{D(\sigma_3)}\right|_{\nu^\prime}^{\lambda_1}\left|\mathbf{X}(P_0)^{D(\sigma_4)}\right|_{\nu^\prime}^{\lambda_2}}{|X_1(P_0)X_2(P_0)|_{\nu'}^{\frac{1}{2}(\beta+\delta)}}\\ =& \left|\frac{X_1(P_0)}{\bY_1(P_0)}\right|_{\nu^\prime}^{\lambda_1\beta+\lambda_2c_1}\left|\frac{X_2(P_0)}{\bY_2(P_0)}\right|_{\nu^\prime}^{\lambda_2\beta+\lambda_1c_2}\frac{\left|\mathbf{X}(P_0)^{D(\sigma_0)}\right|_{\nu^\prime}}{|X_1(P_0)X_2(P_0)|_{\nu'}^{\frac{1}{2}(\beta+\delta)}}\\ \geqslant & \frac{\left|\mathbf{X}(P_0)^{D(\sigma_0)}\right|_{\nu^\prime}}{|\bY_1(P_0)\bY_2(P_0)|_{\nu'}^{\frac{1}{2}(\beta+\delta)}}.
\end{align*}

        {\bfseries Subcase (II.3).} 
        The remaining cases are
        \begin{equation}\label{eq:subcase31}
        	\begin{split}
        	&\max(|X_1(P_0)|_{\nu^\prime},|\bY_1(P_0)|_{\nu^\prime})=|X_1(P_0)|_{\nu^\prime}\\ \quad \text{ and }\quad &\max(|X_2(P_0)|_{\nu^\prime},|\bY_2(P_0)|_{\nu^\prime})=|\bY_2(P_0)|_{\nu^\prime};
        	\end{split}
        \end{equation}        
         or 
         \begin{equation}\label{eq:subcase32}
         \begin{split}
         &\max(|X_1(P_0)|_{\nu^\prime},|\bY_1(P_0)|_{\nu^\prime})=|\bY_1(P_0)|_{\nu^\prime}\\ \quad \text{ and }\quad&\max(|X_2(P_0)|_{\nu^\prime},|\bY_2(P_0)|_{\nu^\prime})=|X_2(P_0)|_{\nu^\prime}.
         \end{split}
         \end{equation}
     With loss of generality assuming \eqref{eq:subcase31}, we then have, for $0<\delta<\beta$ small enough
     $$\max_{0\leqslant m\leqslant 4 }\left(\left|\mathbf{X}(P_0)^{D(\sigma_m)}\right|_{\nu^\prime}\right)\geqslant \left|\mathbf{X}(P_0)^{D(\sigma_0)}\right|_{\nu^\prime}^{\frac{1}{2}(\beta-\delta)}\left|\mathbf{X}(P_0)^{D(\sigma_1)}\right|_{\nu^\prime}^{\frac{1}{2}(\beta+\delta)},$$ so that 
     \begin{align*}
     	\frac{\max_{0\leqslant m\leqslant 4 }\left(\left|\mathbf{X}(P_0)^{D(\sigma_m)}\right|_{\nu^\prime}\right)}{|X_1(P_0)\bY_2(P_0)|_{\nu^\prime}^{\frac{1}{2}(\beta+\delta)}}\geqslant  \frac{\left|\mathbf{X}(P_0)^{D(\sigma_0)}\right|_{\nu^\prime}}{|\bY_1(P_0)\bY_2(P_0)|_{\nu'}^{\frac{1}{2}(\beta+\delta)}}.
     \end{align*}
    
    Recalling \eqref{eq:bij}, we now fix $\delta_2>0$ satisfying $$\deg_L(\mathcal{P}_{n+j})\geqslant\frac{1}{2}(\beta+\delta_2)(\mathfrak{b}_{1,j}+\mathfrak{b}_{2,j}) $$ and \eqref{eq:delta21} (with respect to the fixed pair $(\lambda_1,\lambda_2)$). Summarizing what we have obtained in {\bfseries Subcases (II.1) (II.2) (II.3)} and combining \eqref{eq:yi1}, we now conclude \begin{align*}
    	&\Norm(\mathbf{c}^D)H_L(P) d_\nu(P,Q_0)^{\beta+\delta_2}\\ \gg & \prod_{\nu^\prime\in\mathcal{M}_K^\infty} \frac{\left|\mathbf{X}(P_0)^{D(\sigma_0)}\right|_{\nu^\prime}}{|\bY_1(P_0)\bY_2(P_0)|_{\nu'}^{\frac{1}{2}(\beta+\delta_2)}}\\ =& \prod_{\nu^\prime\in\mathcal{M}_K^\infty}\left(\left|X_{n+1}X_{n+2}\right|_{\nu'}^{\frac{1}{2}(\beta-\delta_2)}\prod_{j\geqslant 3} \left|X_{n+j}(P_0)^{\deg_L(\mathcal{P}_{n+j})-\frac{1}{2}(\beta+\delta_2)(\mathfrak{b}_{1,j}+\mathfrak{b}_{2,j})}\right|_{\nu'}\right)\gg 1,
    \end{align*} the implied  constant being uniform for any fixed pair of $\mathfrak{G}_\triangle$ (recall \eqref{eq:Gindices}) under the assumption of {\bf Case II}.

Therefore we complete the proof of Theorem \ref{thm:mainthm2} (2) and thereby finish the proof of Theorem \ref{thm:mainthm2}. \qed

\section{Toric varieties with Picard number $2$}\label{se:SectionPic2}
This section is devoted to studying rational approximations on split toric varieties of small Picard number.
Those of Picard number $1$ are projective spaces and the conclusion follows easily from Proposition \ref{prop:propertiesofalpha}.
We shall be interested in those of Picard number $2$ in what follows. \footnote{Amongst other arithmetic results, Mignot \cite{Mignot} succeeded in establishing the Batyrev-Manin-Peyre conjecture for some hypersurfaces in such varieties.}

We fix throughout this section $X$ a smooth projective toric variety of Picard number $2$ (constructed by the data in $\S$\ref{se:toricpic2intro}), equipped with a line bundle $L$. 
We will see that $X$ automatically satisfies Hypothesis $(*)$, so Theorem \ref{thm:mainthm} applies. We shall be interested in how the generic best approximations are obtained. The main result is Theorem \ref{thm:mainthm3}, the detailed version of Theorem \ref{thm:generic}.

	\subsection{Classification}\label{se:toricpic2intro}
	Complete smooth toric varieties whose Picard rank equals $2$ are classified by Kleinschmidt \cite{Kleinschmidt}. See also \cite[\S7.3]{CoxLittleSchenck}. They are all projective and in fact are projective bundles over projective spaces.
	Kleinschmidt gives an explicit description of their structural fans. Recall that $n$ denotes the dimension. Let $(e_i)_{1\leqslant i\leqslant n}\subset \ZZ^n$ be the canonical base of $\RR^n$. Let $s,t\geqslant 1$\index{st@$s,t$} be such that $s+t=n$ and $a_t\geqslant \cdots\geqslant a_1\geqslant 0$ are integers. Then we define the set of generators (here the labelling starts from $0$ by convention) $\triangle(1)=\{\rho_0,\cdots,\rho_{n+1}\}$ in the following way:
	\begin{align}
	&\rho_i=e_i, \quad 1\leqslant i \leqslant t \text{ and } t+1\leqslant i\leqslant t+s;\nonumber\\
	&\rho_0=-\sum_{i=1}^{t}e_i;\label{eq:relation21}\\
	&\rho_{n+1}=-\sum_{j=1}^{s}e_{t+j} +\sum_{i=1}^{t}a_i e_i.\label{eq:relation22}
	\end{align}
	Define the set of maximal cones $\triangle_{\max}=\{\sigma_{i,t+j},0\leqslant i\leqslant t,1\leqslant j\leqslant s+1\}$, where 
	$$\sigma_{i,t+j}=\RR_{\geqslant 0} \rho_0+\cdots+\widehat{\RR_{\geqslant 0}\rho_i}+\cdots+\RR_{\geqslant 0}\rho_t+\RR_{\geqslant 0}\rho_{t+1}+\cdots+\widehat{\RR_{\geqslant 0}\rho_{t+j}}+\cdots+\RR_{\geqslant 0}\rho_{n+1}.\index{sigmaitj@$\sigma_{i,t+j}$}$$
	The fan $\triangle$ is then constructed by the cones in $\triangle_{\max}$ and their faces, and the toric variety is
	$$X(\triangle)=\mathbf{P}(\mathcal{O}_{\mathbf{P}^s}\oplus\mathcal{O}_{\mathbf{P}^s}(a_1)
	\oplus\cdots \oplus \mathcal{O}_{\mathbf{P}^s}(a_t)),$$ as a projective bundle over $\mathbf{P}^s$. 
	
	\subsection{The pseudo-effective cone}
	We now show that all such varieties verify Hypotheses $(*)\Leftrightarrow(**)$\index{Hyp1@Hypothesis $(*)$}. For this we change the labelling as follows. Let
	$$v_0=\rho_t, \quad v_i=\rho_i~(1\leqslant i\leqslant t-1\text{ and } t+1\leqslant i\leqslant n+1),\quad v_t=\rho_0.$$
	So that equations \eqref{eq:relation21} and \eqref{eq:relation22} become
	\begin{align}
	&v_0=-\sum_{i=1}^{t}v_i,\label{eq:relation31}\\
	&v_{n+1}=-\sum_{i=1}^{t}b_i v_i-\sum_{j=1}^{s}v_{t+j},\label{eq:relation32}
	\end{align}
	where \begin{equation}\label{eq:bi}
	b_t=a_t,\quad b_i=a_t-a_i,~(1\leqslant i\leqslant t-1)
	\end{equation}
	satisfy $b_t\geqslant b_1\geqslant b_2\geqslant\cdots\geqslant b_{t-1}\geqslant 0$.
		Geometrically, this operation is nothing but the isomorphism \cite[Lemma 7.9]{Hartshorne}
	\begin{align*}
	&\mathbf{P}(\mathcal{O}_{\mathbf{P}^s}\oplus\mathcal{O}_{\mathbf{P}^s}(a_1)
	\oplus\cdots \oplus \mathcal{O}_{\mathbf{P}^s}(a_t))\\
	\simeq & \mathbf{P}(\mathcal{O}_{\mathbf{P}^s}(-b_t)\oplus\mathcal{O}_{\mathbf{P}^s}(-b_1)
	\oplus\cdots \oplus \mathcal{O}_{\mathbf{P}^s}(-b_{t-1})\oplus \mathcal{O}_{\mathbf{P}^s}).
	\end{align*}
	In this way we get two generators which are combinations of other generators with negative coefficients. Consequently, the cone 
	\begin{equation}\label{eq:conehyp2}
	\sigma_{t,n+1}=\RR_{\geqslant 0}v_1+\cdots+\RR_{\geqslant 0}v_n
	\end{equation} 
	satisfies \eqref{eq:useofhypothesis1}, i.e., Hypothesis $(**)$\index{Hyp2@Hypothesis $(**)$} is verified. Equivalently, with the notation $b_0=a_t$ (recall that $D_{\rho_t}$ corresponds to $\rho_t=v_0$),
	$$[D_{\rho_i}]=[D_{\rho_t}]+b_i[D_{\rho_{n+1}}]\quad (0\leqslant i\leqslant t-1);\quad [D_{\rho_{t+j}}]=[D_{\rho_{n+1}}]\quad (1\leqslant j\leqslant s),$$
	therefore
	\begin{equation}\label{eq:simplicialeffectivecone}
	\overline{\operatorname{Eff}}(X)=\RR_{\geqslant 0}[D_{\rho_t}]+\RR_{\geqslant 0}[D_{\rho_{n+1}}].\index{EffX@$\overline{\operatorname{Eff}}(X)$}
    \end{equation}

\subsection{Free rational curves}
	Equations \eqref{eq:relation21} and \eqref{eq:relation22} furnish the following two $1$-cycles with corresponding relations
	\begin{equation}\label{eq:cycle1}
	C_1:\sum_{i=0}^t \rho_i=0;\index{C1@$C_1$}
	\end{equation}
	\begin{equation}\label{eq:cycle2}
	C_2:\sum_{j=1}^{s+1}\rho_{r+j}-\sum_{i=1}^{t}a_i\rho_i=0,\index{C2@$C_2$}
	\end{equation}
	the first one being positive. The second one is not positive in general, unless all $a_i=0$, i.e. $X=\PP^s\times\PP^t$. 
	They give rise to two primitive collections (Definition \ref{def:primitive}) $C_1(1)=\{\rho_0,\cdots,\rho_t\}$ and $C_2(1)=\{\rho_{t+1},\cdots,\rho_{n+1}\}$.
	\begin{proposition}\label{prop:nex}
		The semi-group $\operatorname{AE}_1(X)$ of effective $1$-cycles is generated by $C_1$ and $C_2$:
			$$\operatorname{AE}_1(X)=\NN C_1+\NN C_2\subset A_1(X).\index{AE1X@$\operatorname{AE}_1(X)$}$$
	\end{proposition}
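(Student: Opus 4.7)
The plan is to identify $\overline{\operatorname{NE}}(X)_\RR$ as the real cone $\RR_{\geqslant 0}C_1+\RR_{\geqslant 0}C_2$ and then pass to the lattice $A_1(X)$.

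First I would verify that $(C_1,C_2)$ is a $\ZZ$-basis of $A_1(X)$. By \eqref{eq:exactseqstr2}, $A_1(X)$ is the kernel of $g\colon\ZZ^{\triangle(1)}\to N$. Using the $\ZZ$-basis $\{v_1,\ldots,v_n\}$ of $N$ provided by the cone $\sigma_{t,n+1}$ in \eqref{eq:conehyp2}, the relations \eqref{eq:relation31} and \eqref{eq:relation32} expressing $v_0$ and $v_{n+1}$ in this basis exhibit $(C_1,C_3)$ as a $\ZZ$-basis of this kernel; since $C_3=a_tC_1+C_2$ is a unimodular change of basis, the pair $(C_1,C_2)$ is also a $\ZZ$-basis.

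Next I would invoke the standard description (see e.g.\ \cite[\S6.4]{CoxLittleSchenck}) that for a complete smooth toric variety the Mori cone is generated by the classes $[V(\tau)]$ of torus-invariant curves coming from walls $\tau$, and that for the two adjacent maximal cones $\sigma=\tau+\RR_{\geqslant 0}\varrho$, $\sigma'=\tau+\RR_{\geqslant 0}\varrho'$, the class $[V(\tau)]$ corresponds to the unique wall relation of the form $\varrho+\varrho'+\sum_{\varrho_k\in\tau(1)}c_k\varrho_k=0$. In Kleinschmidt's combinatorics walls split into two families according to which primitive collection contains the two missing rays: either both lie in $\{\varrho_0,\ldots,\varrho_t\}$ (Type~1), or both lie in $\{\varrho_{t+1},\ldots,\varrho_{n+1}\}$ (Type~2). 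A Type~1 wall has wall relation exactly $C_1$. For a Type~2 wall joining $\sigma_{i,t+j}$ and $\sigma_{i,t+j'}$, the wall relation must carry coefficient $1$ on $\varrho_{t+j},\varrho_{t+j'}$ and vanish on $\varrho_i$; a short computation (with $a_0=0$ by convention) shows that $C_2+a_iC_1$ achieves this, so $[V(\tau)]=C_2+a_iC_1$.

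Because $a_i\geqslant 0$ for every relevant $i$, all wall classes lie in $\NN C_1+\NN C_2$, whence $\overline{\operatorname{NE}}(X)\subseteq \RR_{\geqslant 0}C_1+\RR_{\geqslant 0}C_2$; conversely, $C_1$ (from any Type~1 wall) and $C_2$ (from a Type~2 wall with $i=0$) are themselves effective curve classes, giving the reverse inclusion. Combined with the $\ZZ$-basis statement, the intersection $\overline{\operatorname{NE}}(X)\cap A_1(X)$ coincides with $\NN C_1+\NN C_2$, and this equals $\operatorname{AE}_1(X)$ since each generator is realised by an actual effective curve. The only delicate step is the Type~2 wall calculation: one must check carefully that $C_2+a_iC_1$ carries coefficient $1$ on $\varrho_{t+j}$ and $\varrho_{t+j'}$, vanishes on $\varrho_i$, and has all remaining nonzero coefficients supported on $\tau(1)$, which is a direct but sign-sensitive bookkeeping exercise in the relations \eqref{eq:cycle1}--\eqref{eq:cycle3}.
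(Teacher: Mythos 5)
Your proof is correct but takes a genuinely different route from the paper's. The paper argues directly: after noting $(C_1,C_2)$ is a $\ZZ$-basis of $A_1(X)$, it takes an (irreducible) effective curve $E$, writes $[E]=pC_1+qC_2$, reads off the coefficients of the associated relation \eqref{eq:pqrelation}, and deduces $q\geqslant 0$ from the observation that $q<0$ would force $\langle D_{\varrho_{t+j}},E\rangle=q<0$ for all $j$, hence $E\subset\bigcap_{j=1}^{s+1}D_{\varrho_{t+j}}=\varnothing$ because $\{\varrho_{t+1},\ldots,\varrho_{n+1}\}$ is a primitive collection; the same reasoning with the primitive collection $\{\varrho_0,\ldots,\varrho_t\}$ then yields $p\geqslant 0$. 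You instead invoke the toric cone theorem (Mori cone spanned by wall classes $[V(\tau)]$), classify the walls of Kleinschmidt's fan into two families, and compute each wall relation explicitly, obtaining $C_1$ for Type~1 walls and $C_2+a_iC_1$ for Type~2 walls. Your wall bookkeeping checks out: for a Type~2 wall shared by $\sigma_{i,t+j}$ and $\sigma_{i,t+j'}$, the required coefficient $1$ on $\varrho_{t+j},\varrho_{t+j'}$ and $0$ on the missing ray $\varrho_i$ forces $q=1$ and $p=a_i$ (with $a_0=0$), and nonnegativity of the $a_i$ then gives the containment of the Mori cone in $\RR_{\geqslant 0}C_1+\RR_{\geqslant 0}C_2$. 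Your route buys a slightly stronger structural description (which wall classes generate extremal rays) at the cost of heavier input (the toric cone theorem and a case analysis), whereas the paper's argument is more elementary and shorter, needing only the empty-intersection property of primitive collections. Note also that the paper's phrase "linearly independent and primitive" is not by itself sufficient to conclude that $(C_1,C_2)$ is a $\ZZ$-basis; your argument via the projection onto the $(v_0,v_{n+1})$ coordinates together with the unimodular change of basis $C_3=a_tC_1+C_2$ makes this step airtight.
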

\begin{proof}
The relations $C_1$ and $C_2$ are linearly independent and primitive, so they generate the group $A_1(X)$.
		Let $C\in \operatorname{AE}_1(X)$ be the class of an effective curve $E$. Then there exist $p,q\in\ZZ$ such that
		$$C=pC_1+qC_2.$$
		We want to show that $p,q\geqslant 0$. The relation corresponding to $C$ is 
		\begin{equation}\label{eq:pqrelation}
		p\rho_0+\sum_{i=1}^{t}(p-qa_i)\rho_i+\sum_{j=1}^{s+1}q\rho_{t+j}=0.
		\end{equation}
		 Recall that $\{\rho_0,\cdots,\rho_t\},\{\rho_{t+1},\cdots,\rho_{n+1}\}$ are primitive collections. In particular none of them is contained in any maximal cone (Definition \ref{def:primitive}). So $\cap_{i=0}^t D_{\rho_i}=\cap_{j=1}^{s+1}D_{\rho_{t+j}}=\varnothing$. If $q<0$, one would have $\langle D_{\rho_{t+j}},E\rangle=q<0$ for all $1\leqslant j\leqslant s+1$ and so $E\subset \cap_{j=1}^{s+1}D_{\rho_{t+j}}=\varnothing$, which is absurd. 
		 So $q$ must be non-negative.
		 Similarly if $p<0$, then $\langle D_{\rho_i},E\rangle=p-qa_i\leqslant\langle D_{\rho_0},E\rangle =p<0$ for every $1\leqslant i\leqslant t$, which is again impossible since $\cap_{i=0}^t D_{\rho_i}=\varnothing$. 
\end{proof}

Equation \eqref{eq:relation32} furnishes two relations
\begin{equation}\label{eq:cycle3}
C_3:\sum_{i=1}^{t}b_iv_i+\sum_{j=1}^{s+1}v_{t+j}=b_t\rho_0+\sum_{i=1}^{t-1}b_i\rho_i+\sum_{j=1}^{s+1}\rho_{t+j}=0,\index{C3@$C_3$}
\end{equation}
\begin{equation}\label{eq:C1C3}
C_1+C_3: (b_t+1)\rho_0+\rho_t+\sum_{i=1}^{t-1}(b_i+1) \rho_i+\sum_{j=1}^{s+1}\rho_{t+j}=0.\index{C1C3@$C_1+C_3$}
\end{equation}
which are positive and verify (viewed as elements in $\ZZ^{\triangle(1)}$)
$$C_3=a_tC_1+C_2,\quad C_1+C_3=(a_t+1)C_1+C_2.$$
\begin{lemma}\label{le:veryfreec1c3}
	The relation $C_1+C_3$ represents very free rational curves. So does the relation $C_3$ if $b_i\neq 0$ for all $i\in\{1,\cdots,t\}$.
\end{lemma}
\begin{proof}
	The relation $C_1+C_3$ \eqref{eq:C1C3} verifies $$\operatorname{Vect}_\QQ\{\rho:\rho\in(C_1+C_3)(1)\}=\Vect_\QQ\{\rho_0,\cdots,\rho_{n+1}\}=N_\QQ,$$
	so it represents very free rational curves by Theorem \ref{thm:fTXtoric}. 
	
	If $b_i\neq 0$ for all $i\in\{1,\cdots,t\}$, then $\rho_i\in C_3(1)$ for all $i\neq t$ and so $$\operatorname{Vect}_\QQ\{\rho:\rho\in C_3(1)\}=\Vect_\QQ\{\rho_0,\cdots,\rho_{t-1},\rho_{t+1},\cdots,\rho_{n+1}\}=N_\QQ.$$ Theorem \ref{thm:fTXtoric} tells us that some curve of class $C_3$ is very free.
\end{proof}
\begin{remark*}
	Following the strategy of proving Theorem \ref{thm:fTXtoric}, one can show that for every rational curve $l$ intersecting with $\mathcal{T}$, $ T_X|_l$ equals
	\begin{equation*}
	\tiny\begin{cases}
	\mathcal{O}_{\PP^1}(2)\bigoplus \mathcal{O}_{\PP^1}(1)^{\oplus s-1}\bigoplus \left(\bigoplus_{i=1}^t \mathcal{O}_{\PP^1}(b_i)\right)  &\text{if for all } i,b_i\neq 0,[l]=C_3;\\
	\mathcal{O}_{\PP^1}(2)^{\oplus 2}\bigoplus\mathcal{O}_{\PP^1}(1)^{\oplus s-1} \bigoplus\left(\bigoplus_{i\in \{1,\cdots ,t\}\setminus\{t-1\}}\mathcal{O}_{\PP^1}(b_i+1)\right) &\text{if there exists }b_i=0,[l]=C_1+C_3.
	\end{cases}
	\end{equation*}
\end{remark*}

\subsection{The nef cone and the big cone}\label{se:bigness}
Since $X$ has small Picard number, it is relatively easy to determine whether a divisor is big, nef or ample.
Let $D$ be an effective $\mathcal{T}$-invariant $\QQ$-divisor. Write $L=\mathcal{O}_X(D)$ and $[D]=A[D_{\rho_t}]+B[D_{\rho_{n+1}}]$ for $A,B\in\QQ_{\geqslant 0}$ by \eqref{eq:simplicialeffectivecone}.
\begin{lemma}\label{le:bigness}
	\hfill
	\begin{enumerate}
		\item The line bundle $L$ is nef (resp. ample) if and only if $B\geqslant Aa_t$ (resp. $A>0$ and $B>Aa_t$).
		\item The line bundle $L$ is big if and only if $AB>0$. 
	\end{enumerate}
\end{lemma} 
In particular $\operatorname{Nef}(X)\subsetneq\overline{\operatorname{Eff}}(X)$ unless $a_t=0$, i.e., $X\simeq \PP^s\times\PP^t$ where they coincide.
\begin{proof}
	Statement (1) follows from the toric Nakai-Moishezon criterion (see \cite[Theorem 2]{Kleinschmidt} or \cite[Theorem 6.3.13]{CoxLittleSchenck}) combined with Proposition \ref{prop:nex}, because $\deg_L C_1=A,\deg_L C_2=B-Aa_t$. We now quickly show (2) using Theorem \ref{thm:Demazure} (2). \footnote{We can also use the fact that the big cone is the relative interior of $\overline{\operatorname{Eff}}(X)$ by \cite[Theorem 2.2.26]{Lazarsfeld}.}  To this end we want to find $m=\sum_{i=1}^{n}c_iv_i^*\in M_\QQ$ as an interior point of the polyhedron $P_D$, i.e. such that $$\langle m,v_i\rangle=c_i>0,1\leqslant i\leqslant n,$$
	$$\langle m,v_0\rangle=-\sum_{i=1}^{t}c_i>-A\Leftrightarrow \sum_{i=1}^{n}c_i<A,$$
	$$\langle m,v_{n+1}\rangle=-\sum_{i=1}^{t}c_ib_i-\sum_{j=1}^{s}c_{t+j}>-B\Leftrightarrow \sum_{i=1}^{t}c_ib_i+\sum_{j=1}^{s}c_{t+j}<B.$$
	Once $AB>0$, such $(c_i)_{1\leqslant i\leqslant n}$ certainly exists (any sufficiently small $c_i$ suffice), and conversely if such $(c_i)_{1\leqslant i\leqslant n}$ exists then necessarily $AB>0$. This finishes the proof of the lemma. 
\end{proof}

\begin{proposition}\label{prop:veryfreecurvedegree}
Let $D$ be a $\mathcal{T}$-invariant divisor and $L=\mathcal{O}_X(D)$.
\begin{enumerate}
	\item Assume that $L$ is nef, then the minimal $L$-degree of very free rational curves are	\begin{equation*}
	\begin{cases}
	\deg_L C_3 & \text{ if for all }i\in\{1,\cdots,t\},b_i\neq 0;\\
	\deg_L C_1+ \deg_L C_3 &  \text{ if there exists }i\in \{1,\cdots,t\}, b_i=0. 
	\end{cases}
	\end{equation*}
	\item Assume that $L$ is big. Then very free rational curves of minimal $L$-degree have class precisely
	\begin{equation*}
	\begin{cases}
	C_3 & \text{ if for all } i\in\{1,\cdots,t\},b_i\neq 0;\\
	C_1+ C_3 &  \text{ if there exists } i\in \{1,\cdots,t\}, b_i=0. 
	\end{cases}
	\end{equation*}
\end{enumerate}
\end{proposition}
\begin{proof}
	
	Consider the relation of an effective $1$-cycle (Proposition \ref{prop:nex}), 
	$$C=pC_1+qC_2\in\operatorname{AE}_1(X),\quad p,q\in\NN,$$
	which represents very free rational curves. Then $C$ is a positive relation by Theorem \ref{thm:positiverelation}.
	Comparing the coefficients of \eqref{eq:pqrelation}, we get
	\begin{equation}\label{eq:conditiontobeveryfree2}
	p\geqslant q\max_{1\leqslant i\leqslant t}a_i=qa_t.
	\end{equation}
	Observe that $C_1(1)=\{\rho_0,\cdots,\rho_t\}$ is a centred primitive collection. It does not represent very free rational curves, nor does any of its multiples by Example \ref{rmk:primitivecol} (alternatively by Theorem \ref{thm:fTXtoric}), since $X$ is not the projective space. Therefore we must have $q>0$. 
	
	Assume first that $L$ is nef. By Lemma \ref{le:bigness} (1) we have $\deg_L C_1,\deg_L C_3\geqslant 0$. We conclude that 
\begin{equation}\label{eq:ineq1}
	\deg_L C=p\deg_L C_1+q\deg_L C_2\geqslant q(a_t\deg_L C_1+\deg_L C_2)= q\deg_L C_3\geqslant \deg_L C_3.
\end{equation}
	So \eqref{eq:ineq1} implies that $\deg_{L}C_3$ is the minimal of degree amongst all very free rational curves by Lemma \ref{le:veryfreec1c3}.
    If there exists some $b_i=0$, or equivalently, $b_{t-1}=0$ since it is smallest amongst all $b_i$, then $\rho_{t-1}\not\in C_3(1)$. Therefore
    \begin{align*}
    \operatorname{Vect}_\QQ\{\rho:\rho\in C_3(1)\}\subseteq\operatorname{Vect}_\QQ \{\rho_0,\cdots,\rho_{t-2},\rho_{t+1},\cdots,\rho_{n+1}\},
    \end{align*}
    and so $\dim\operatorname{Vect}_\QQ\{\rho:\rho\in C_3(1)\}\leqslant r+s-1=n-1$. By Theorem \ref{thm:fTXtoric}, the class $C_3$ and all of its multiples do not represent very free curves any more. Therefore we must have $p\neq qa_t$, otherwise we would have $C=qC_3$. Hence $p\geqslant qa_t+1$ by \eqref{eq:conditiontobeveryfree2}, and consequently we get
     \begin{equation}\label{eq:ineq2}
     \begin{split}
     \deg_L C&=p\deg_L C_1+q \deg_L C_2\\
     &\geqslant (qa_t+1)\deg_L C_1+q\deg_L C_2\\
     &=q(a_t\deg_L C_1+\deg_L C_2)+\deg_L C_1\\
     &=q\deg_L C_3+\deg_L C_1\\
     &\geqslant \deg_L C_3+\deg_L C_1.
     \end{split}
     \end{equation}
  Thus the relation $C_1+C_3$ achieves the minimum in \eqref{eq:ineq2} by Lemma \ref{le:veryfreec1c3}.
     
     Now assume that $L$ is big. By Lemma \ref{le:bigness} (2), we have $\deg_L C_1,\deg_L C_3> 0$. Then the inequalities \eqref{eq:ineq1} \eqref{eq:ineq2} still hold.	If $b_i\neq 0$ for all $i\in\{1,\cdots,t\}$, then the inequalities in \eqref{eq:ineq1} are all equalities if and only if $p=a_t,q=1$, which means $C=C_3$. So any very free rational curve having $L$-degree $\deg_L C_3$ is represented by $C_3$ by the discussion in the previous paragraph. If $b_{t-1}=0$, then the inequality \eqref{eq:ineq2} is an equality precisely when $q=1,p=a_t+1$, in other words, $C=C_1+C_3$. Hence $C_1+C_3$ is the only class that represents very free rational curves of minimal $L$-degree.
\end{proof}

\subsection{Generic Diophantine approximation}
To state the main result of this section, let us first define what we shall call ``general lines''. They will realize the class of minimal $L$-degree in Proposition \ref{prop:veryfreecurvedegree}. As in Section \ref{se:SectionTheorem} we can assume $Q=Q_0=(1,\cdots,1)\in \mathcal{T}(K)$\index{Q0@$Q_0$}. For a maximal cone $\sigma$, consider its associated affine neighbourhood $U_\sigma\simeq \mathbf{A}^n_K$.
For a $n$-tuple $\mathbf{m}=(m_1,\cdots,m_n)\in K^n$, a \emph{line} through $Q_0$ with parameter $\mathbf{m}$ (with respect to the cone $\sigma$) is, by definition, the rational curve which extends the morphism 
\begin{align*}
\begin{array}{ccc}
\mathbf{A}^1 & \longrightarrow & U_{\sigma}\\
t &\longmapsto & \prod_{\rho_i\in\sigma(1)}\lambda_{\rho_i}(m_i t+1)=(m_1t+1,\cdots,m_nt+1),
\end{array}
\end{align*}
where $\lambda_{\rho_i}$\index{lambdarho@$\lambda_\rho$} is the co-character associated to $\rho_i\in\sigma(1)$. We say that this line is \emph{general} if the parameter $\mathbf{m}\in K^n$ satisfies some open condition.

To compute the class as well as the associated relation of such a general line, we impose the open condition that $\prod_{i=1}^{n}m_i\neq 0$. Write $\sigma(1)=\{\rho_1,\cdots,\rho_n\}$ and consider the unique cone $\tau\in\triangle$ containing $-\sum_{i=1}^{n}\rho_i$ in its relative interior so that
$$\sum_{i=1}^{n}\rho_i+\sum_{\rho\in\tau(1)}c_{\rho}\rho=0,$$
where all $c_\rho$ are (strictly) positive. This is the positive relation for general lines we are looking for. It computes the intersection multiplicities with all boundary divisors (plus the contribution from the point at infinity).

To state our main result, we continue to use the notation in Section \ref{se:toricpic2intro}. All approximation constants are computed with respect to a fixed place $\nu\in\mathcal{M}_K$ in what follows.
\begin{theorem}\label{thm:mainthm3}
	Let $X$ be a split projective smooth toric variety over $K$ with Picard number $2$. Assume that $L=\mathcal{O}_X(D)$ is nef.
	\begin{enumerate}
		\item  We have
		\begin{equation*}
		\aess(Q_0)=
		\begin{cases}
		\deg_L C_3 & \text{ if for all } i\in\{1,\cdots,t\},b_i\neq 0;\\
		\deg_L C_1 +\deg_L C_3 & \text{ if there exists } i\in \{1,\cdots,t\}, b_i=0. 
		\end{cases}
		\end{equation*}
		In all cases, the generic best approximations \emph{can be achieved} (see Definition \ref{def:achieve} (4)) on general lines (with respect to the cone $\sigma_{t,n+1}$ \eqref{eq:conehyp2}) passing through $Q_0$, which are very free of minimal $L$-degree. Consequently, the essential constant equals the minimal $L$-degree of very free rational curves\footnote{Edited after publication on 06.12.2021.}.
		\item Assume moreover that $L$ is big. If either $a_t\geqslant 2$ or $b_{t-1}=0$, then minimal free curves through $Q_0$ form a locally accumulating subvariety.
	\end{enumerate} 
\end{theorem}

\begin{proof}
Fix a general line $l$ under the parametrization of the cone $\sigma_{t,n+1}$ generated by $\{v_1,\cdots,v_n\}$. We start by determining the class of $l$. Consider the element
	$$\rho=\sum_{i=1}^{n}v_i=\sum_{i=0}^{t-1}\rho_i+\sum_{j=1}^{s}\rho_{t+j}.$$
	If $b_i>0$ for all $i$, we have, using \eqref{eq:relation32},
	\begin{align*}
		-\rho&=-\sum_{i=0}^{t-1}\rho_i+\rho_{n+1}+\sum_{i=1}^{t-1}b_i\rho_i+b_t\rho_0\\&=\rho_{n+1}+\sum_{i=1}^{t-1}(b_i-1)\rho_i+(b_t-1)\rho_0. 
	\end{align*}
	This implies that $-\rho$ belongs to a face of $\cap_{j=1}^s\sigma_{t,t+j}$. 
	Therefore the corresponding relation of $l$ is
\begin{align*}
	0=\rho+(-\rho)&=(\sum_{i=0}^{t-1}\rho_i+\sum_{j=1}^{s}\rho_{t+j})+(\rho_{n+1}+\sum_{i=1}^{t-1}(b_i-1)\rho_i+(b_t-1)\rho_0)\\ &=b_t\rho_0+\sum_{i=1}^{t-1}b_i\rho_i+\sum_{j=1}^{s}\rho_{t+j}+\rho_{n+1},
\end{align*}
	namely, the class of $l$ is that of $C_3$ \eqref{eq:cycle3}. On the other hand, if there exists some $b_i=0$, then necessarily $b_{t-1}=0$ and using again \eqref{eq:relation32} we get 
    $$-\rho=\rho_t+\rho_{n+1}+\sum_{i=1}^{t-2}b_i\rho_i+b_t\rho_0\in\bigcap_{j=1}^s \sigma_{t-1,t+j},$$
    which gives the relation
    \begin{align*}
    	0=\rho+(-\rho)&=(\sum_{i=0}^{t-1}\rho_i+\sum_{j=1}^{s}\rho_{t+j})+(\rho_t+\rho_{n+1}+\sum_{i=1}^{t-1}b_i\rho_i+b_t\rho_0)\\ &=(b_t+1)\rho_t+\sum_{i=1}^{t-1}(b_i+1)\rho_i+\sum_{j=1}^{s}\rho_{t+j}+\rho_{n+1}.
    \end{align*}
    So the class of $l$ is that of $C_1+C_3$ \eqref{eq:C1C3}.
    Therefore in any case $l$ is very free by Lemma \ref{le:veryfreec1c3}. we conclude from Definition \ref{def:essconst} that
    \begin{align}
    	\aess(Q_0)&\leqslant \alpha(Q_0,l)
    	=\deg_L (l)\nonumber\\&=\begin{cases}
    	\deg_L C_3, & \text{ if for all } i\in\{1,\cdots,t\},b_i\neq 0;\\
    	\deg_L C_1+\deg_L C_3, & \text{ if there exists } i\in \{1,\cdots,t\}, b_i=0. 
    	\end{cases} \label{eq:aessupperbound}
    \end{align}
    
   Recall \eqref{eq:relation31}, \eqref{eq:relation32}. We shall work under the parametrization given by $\sigma_{t,n+1}$ (see \eqref{eq:unitorpara}):
    \begin{equation}\label{eq:parametrizationusigma0}
    \begin{split}
    \pi:\pi^{-1}(U_{\sigma_{t,n+1}})&\to U_{\sigma_{t,n+1}}\simeq \mathbf{A}^n\\
    (X_0,\cdots,X_{n+1})&\longmapsto \left(\frac{X_0}{X_t X_{n+1}^{b_t}},\frac{X_1}{X_t X_{n+1}^{b_1}}\cdots,\frac{X_{t-1}}{X_t X_{n+1}^{b_{t-1}}},\frac{X_{t+1}}{X_{n+1}},\cdots,\frac{X_n}{X_{n+1}}\right).
    \end{split}
    \end{equation}  
    	We shall work with the toric height function $H_L$ associated to the nef line bundle $L$ defined in $\S$\ref{se:toricheight}.
    	We shall use the distance function \eqref{eq:distdecreasing} defined for all $P=(y_1,\cdots,y_n)$ near $Q_0$:
    $$d_\nu(P,Q_0)=\min\left(1,\max_{1\leqslant i\leqslant n}\left(|y_i-1|_\nu\right)\right).$$
   
   To ease notation, as in $\S$\ref{se:proofofmainthm2} we shall omit the subscripts $L,\nu$ in all $\alpha$-constants below. We define the Zariski closed subset
   \begin{equation}\label{eq:Z}
   Z=\bigcup_{i=1}^n\overline{(y_i=1)}^{\operatorname{Zar}},
   \end{equation}
   where as in $\S$\ref{se:proofofmainthm2}, $(y_i)\subset K^n$ denotes the coordinates of points in $U_{\sigma_{t,n+1}}(K)$. The rest of the proof is devoted to proving
   $$\alpha(Q_0,X\setminus Z)\geqslant \alpha(Q_0,l),$$
   which implies the lower bound $$\aess(Q_0)\geqslant \alpha(Q_0,l).$$
    We shall only consider the cases where $\nu\in\mathcal{M}_k^f$, as the estimation for archimedean places is almost identical.
    For any fixed $P\in(\mathcal{T}\setminus Z)(K)$. Having chosen a set $\mathcal{C}$ comprising integral ideals representing the group $\operatorname{Cl}_K$, let $P_0=(X_i)_{0\leqslant i\leqslant n+1}\in (K\setminus\{0\})^{n+2}$ be one integral lift of $P$ in some twisted torsor $\ctildeT$ for some $\mathbf{c}\in \mathcal{C}^2$ (Theorem \ref{thm:parabyunitor}). With the choice of the $\mathbf{Z}$-basis $\mathcal{D}=\{[D_t],[D_{n+1}]\}$ for $\operatorname{Pic}(X)$,  we have $P_0\in(\mathcal{O}_K\setminus\{0\})^{n+2}$. 
     By Salberger's height formula (Proposition \ref{prop:height}), we have
    \begin{equation}\label{eq:heightlower1}
    \Norm(\mathbf{c}^{D})H_L(P)\geqslant \prod_{\nu^\prime\in\mathcal{M}_K^\infty}\max_{1\leqslant j\leqslant s+1}(|\mathbf{X}(P_0)^{D(\sigma_{t,t+j})}|_{\nu^\prime}).
    \end{equation}
    For every $1\leqslant j\leqslant s+1$, write $D(\sigma_{t,t+j})=c_{t,t+j}D_{\rho_t}+d_{t,t+j}D_{\rho_{t+j}}$, viewed as an element in $\mathbf{Z}^{n+2}$. According to the equations \eqref{eq:relation31} \eqref{eq:relation32} and the relations \eqref{eq:cycle1} \eqref{eq:cycle3}, we have as in \eqref{eq:Dsigma00} \eqref{eq:ci0}, \begin{equation}\label{eq:ctdt}
    c_{t,t+j}=\deg_L C_1,\quad d_{t,t+j}=\deg_L C_3.
    \end{equation} Therefore
    $$\mathbf{X}(P_0)^{D(\sigma_{t,t+j})}=X_t^{\deg_L C_1}X_{t+j}^{\deg_L C_3}.$$
    Since $P\not\in Z(K)$, we have $y_{t+j}=\frac{X_{t+j}}{X_{n+1}}\neq 1$ for all $1\leqslant j\leqslant s$. Now fix any such $j_0$. By Lemma \ref{le:geometryofnumbers}, we have
\begin{equation}\label{eq:distlower1}
\begin{split}
    d_\nu(P,Q_0)\geqslant &\left|\frac{X_{t+j_0}-X_{n+1}}{X_{n+1}}\right|_{\nu}\\
    \geqslant & |X_{t+j_0}-X_{n+1}|_\nu\\
    \geqslant &\prod_{\nu^\prime\in\mathcal{M}_K^\infty} |X_{t+j_0}-X_{n+1}|_{\nu^\prime}^{-1}\\
    \gg & \prod_{\nu^\prime\in\mathcal{M}_K^\infty}\max\left(|X_{t+j_0}|_{\nu^\prime},|X_{n+1}|_{\nu^\prime}\right)^{-1}.
\end{split}
\end{equation}
    On taking the sections $\mathbf{X}(P_0)^{D(\sigma_{t,n+1})}$ and $\mathbf{X}(P_0)^{D(\sigma_{t,t+j_0})}$ in \eqref{eq:heightlower1}, together with \eqref{eq:distlower1}, and by the product formula, we obtain the following lower bound, which is uniform for all $P\in(\mathcal{T}\setminus Z)(K)$:
    \begin{align*}
    	&\Norm(\mathbf{c}^{D})H_L(P)d_\nu(P,Q_0)^{\deg_L C_3}\\
    	\gg &\prod_{\nu^\prime\in\mathcal{M}_K^\infty}\frac{\max\left(\left|X_t^{\deg_L C_1}X_{n+1}^{\deg_L C_3}\right|_{\nu^\prime}, \left|X_t^{\deg_L C_1}X_{t+j_0}^{\deg_L C_3}\right|_{\nu^\prime}\right)}{\left(\max\left(|X_{n+1}|_{\nu^\prime},|X_{t+j_0}|_{\nu^\prime}\right)\right)^{\deg_L C_3}}\\
        \gg &\prod_{\nu^\prime\in\mathcal{M}_K^\infty} |X_t^{\deg_L C_1}|_{\nu^\prime} \gg 1.
    \end{align*}
    So this proves, by Proposition \ref{prop:lowerbd},
    \begin{equation}\label{eq:aesslowerbd}
    \alpha(Q_0,X\setminus Z)\geqslant b_{L,\nu}^{X\setminus\mathcal{T}}(Q_0,X\setminus Z)\geqslant \deg_L C_3.
    \end{equation}
    
    If there exists some $b_i=0$, more is true. First we have $b_{t-1}=0$. It leads us to look at the additional maximal cones $\sigma_{t-1,n+1},\sigma_{t-1,t+j_0}$. For this we rewrite \eqref{eq:relation31} and \eqref{eq:relation32} as (also true for $j_0=s+1$)
    \begin{align*}
    	&v_{t-1}=-\sum_{i=0}^{t-2}v_i-v_t,\\
    	&v_{t+j_0}=-\sum_{i\in\{1,\cdots,t\}\setminus\{t-1\}}b_iv_i-\sum_{j\in\{1,\cdots,s+1\}\setminus \{j_0\}}v_{t+j}.
    \end{align*}
    Therefore we get similarly to \eqref{eq:ctdt},
    $$\mathbf{X}(P_0)^{D(\sigma_{t-1,t+j_0})}=X_{t-1}^{\deg_L C_1}X_{t+j_0}^{\deg_L C_3},$$
    $$\mathbf{X}(P_0)^{D(\sigma_{t-1,n+1})}=X_{t-1}^{\deg_L C_1}X_{n+1}^{\deg_L C_3}.$$
    We now bound the height from below using four maximal cones:
\begin{equation}\label{eq:heightlower2}
    \Norm(\mathbf{c}^{D})H_L(P)\geqslant \prod_{\nu^\prime\in\mathcal{M}_K^\infty}\max(|\mathbf{X}(P_0)^{D(\sigma_{i,t+j})}|_{\nu^\prime}, i\in\{t,t-1\},j\in\{j_0,s+1\}).
\end{equation}
   Since $P\not\in Z(K)$ (recall \eqref{eq:parametrizationusigma0} and \eqref{eq:Z}), thanks to $b_{t-1}=0$, by using additionally the $(t)$-th coordinate $$y_t=\frac{X_{t-1}}{X_t X_{n+1}^{b_{t-1}}}=\frac{X_{t-1}}{X_t}\neq 1$$ at the same time, similarly to \eqref{eq:distlower1}, one has 
    \begin{align*}
   &d_\nu(P,Q_0)\\ \gg& \max\left(\prod_{\nu^\prime\in\mathcal{M}_K^\infty}\max (|X_t|_{\nu^\prime},|X_{t-1}|_{\nu^\prime})^{-1},\prod_{\nu^\prime\in\mathcal{M}_K^\infty}\max(|X_{n+1}|_{\nu^\prime},|X_{t+j_0}|_{\nu^\prime})^{-1}\right).
    \end{align*}
    Integrating this into the lower bound \eqref{eq:heightlower2}, we finally deduce
    \begin{align*}
    	&\Norm(\mathbf{c}^{D})H_L(P)d_\nu(P,Q_0)^{\deg_L C_1 +\deg_L C_3}\\
    	\gg & \prod_{\nu^\prime\in\mathcal{M}_K^\infty}\max\left(|\mathbf{X}(P_0)^{D(\sigma_{t,n+1})}|_{\nu^\prime},|\mathbf{X}(P_0)^{D(\sigma_{t-1,n+1})}|_{\nu^\prime},\right.\\
    	&\quad\quad\quad\quad\quad\left. |\mathbf{X}(P_0)^{D(\sigma_{t,t+j_0})}|_{\nu^\prime},|\mathbf{X}(P_0)^{D(\sigma_{t-1,t+j_0})}|_{\nu^\prime}\right)\\
    	&\times \left(\prod_{\nu^\prime\in\mathcal{M}_K^\infty}\max(|X_{n+1}|_{\nu^\prime},|X_{t+j_0}|_{\nu^\prime})\right)^{-(\deg_L C_3)}\\ &\times \left(\prod_{\nu^\prime\in\mathcal{M}_K^\infty}\max (|X_t|_{\nu^\prime},|X_{t-1}|_{\nu^\prime})\right)^{-(\deg_L C_1)}\gg 1.
    \end{align*}
    This lower bound is again uniform for $P\in(\mathcal{T}\setminus Z)(K)$.
    So under the extra condition that there exists $b_i=0$, by Proposition \ref{prop:lowerbd}, we have proved
    \begin{equation}\label{eq:aesslowerbd2}
    \alpha(Q_0,X\setminus Z)\geqslant b_{L,\nu}^{X\setminus\mathcal{T}}(Q_0,X\setminus Z)\geqslant \deg_L C_1 +\deg_L C_3.
    \end{equation}
    
     In summary, part (1) of Theorem \ref{thm:mainthm3} is now a union of \eqref{eq:aessupperbound} \eqref{eq:aesslowerbd} \eqref{eq:aesslowerbd2} and Proposition \ref{prop:veryfreecurvedegree} (1).
    
    Finally, assume moreover that $L$ is big. If $a_t\geqslant 2$, then $X\neq \PP^s\times\PP^t$, in particular the only centred primitive collection of $X$ is $C_1(1)$. Thus minimal free rational curves all have class $C_1$ by Theorem \ref{th:BCFH}, and those through $Q_0$ form a subvariety $Y_1$ isomorphic to $\PP^t$, which satisfies (Proposition \ref{prop:propertiesofalpha}) $$\alpha(Q_0,Y)=\aess(Q_0,Y_1)=\deg_L C_1.$$  Since $L$ is big and nef, we get by Lemma \ref{le:bigness} that
    $$\deg_L C_3\geqslant a_t\deg_L C_1\geqslant 2\deg_L C_1>\deg_L C_1>0.$$
    Hence we conclude from part (1) of Theorem \ref{thm:mainthm3} that 
    $$\aess(Q_0)\geqslant \deg_L C_3>\aess(Q_0,Y_1),$$ which shows that $Y$ is locally accumulating by Definition \ref{def:achieve} (2).
    If $b_{t-1}=0$, we also deduce from part (1) that $$\aess(Q_0)=\deg_{L} C_1+\deg_{L} C_3>\max(\deg_{L} C_1,\deg_{L} C_3).$$ Note that $C_2(1)$ \eqref{eq:cycle2} may become another centred primitive collection, and this happens precisely when all $a_i=0$, corresponding to $X=\PP^s\times\PP^t$. In that case $C_2$ coincides with $C_3$ \eqref{eq:cycle3}. The deformation locus $Y_2$ of the minimal rational curves of class $C_2$ through $Q_0$ is isomorphic to $\PP^s$ and $$\alpha(Q_0,Y_2)=\aess(Q_0,Y_2)=\deg_L C_3.$$
    So the variety $Y_1\cup Y_2$ satisfies 
    $$\aess(Q_0,Y_1\cup Y_2)=\max(\deg_{L} C_1,\deg_{L} C_3),$$
    and is therefore locally accumulating. The case where there is some $a_i\neq 0$ (assuming $b_{t-1}=0$) is also reduced to the previous one. This completes the proof of part (2) of Theorem \ref{thm:mainthm3}.
\end{proof}

	\bibliographystyle{alpha}
	\bibliography{r}

\renewcommand{\indexname}{Index of notation}
\centering\printindex

\end{document}